%% file: main.tex
\documentclass[11pt]{article}

\PassOptionsToPackage{hypertexnames=false}{hyperref}

\input{./settings_latex/set_text_en_default.tex} % jp or en
\input{./settings_latex/set_ref_clever_en_simple.tex} % default or clever_jp or clever_en
\input{./settings_latex/set_math_symbol_simple.tex}
\input{./settings_latex/set_bib_default.tex} % use biblatex

\usepackage{tikz}
\usetikzlibrary{positioning,arrows.meta,fit,backgrounds}

\begin{document}

\title{\vspace{-10mm}\textbf{Asymptotic Error Distributions for Euler-Maruyama Schemes with Additive Semimartingale Noise}}
\author{Mikio Hirokane\thanks{Division of Mathematical Science for Social Systems, Department of Systems Innovation, Graduate School of Engineering Science, The University of Osaka. E-mail : \texttt{hirokane@sigmath.es.osaka-u.ac.jp}}}
\date{October 31, 2026}

\maketitle

\begin{abstract}
\begingroup
\setlength{\emergencystretch}{1em}
We establish a joint stable limit theorem for integrated discretization errors of finite-dimensional c\`adl\`ag semimartingales whose predictable characteristics are absolutely continuous in time and satisfy a pathwise integrability condition. The normalized integrated error, evaluated on the discretization grid, converges stably jointly with the discretized semimartingale in the vector-valued Skorokhod \(J_1\)-topology. No martingale representation property is assumed. We apply this theorem to the Euler--Maruyama scheme for stochastic differential equations with additive semimartingale noise. For a \(C^1\) drift with locally Lipschitz derivative and a nonexplosive solution, the grid-point error normalized by \(n\) converges stably jointly with the discretized driver. The limiting error solves a linear integral equation with an explicit inhomogeneous term involving the driver, an auxiliary Brownian motion, and independent uniform marks attached to the jumps. We also obtain the same limit under two alternative
sets of assumptions allowing less regular drifts, based on small-jump integrability or smoothing estimates for L\'evy semigroups. For continuous drivers, the continuous-time error converges stably in the uniform topology.
\par\endgroup
\end{abstract}

\tableofcontents

\section{Introduction}
\label{sec:introduction}

Limit theorems for numerical error processes form a classical part of the asymptotic theory of stochastic discretization.
Rootz\'en \cite{rootzen1980limit} studied limit distributions for discretization errors of stochastic integrals.
Kurtz and Protter \cite{kurtz1991weak} developed a general weak convergence theory for stochastic integrals and stochastic differential equations, while Jacod and Protter \cite{jacod1998asymptotic} studied asymptotic error distributions for Euler approximations.
This line of research has since developed in several
directions, including approximations of stochastic
integrals and SDEs on random time grids
\cite{fukasawa2011discretization,fukasawa2020efficient},
limit theorems associated with multilevel Monte Carlo
methods \cite{benalaya2015central,dereich2016multilevel},
fractional diffusions \cite{hu2016rate},
and stochastic Volterra equations
\cite{fukasawa2023limit,nualart2023error}.
A recurring theme in this literature is functional convergence of the normalized error process, often in a stable form that retains its joint asymptotic dependence on the original driving process.

In this paper, we establish a joint stable limit theorem for integrated discretization errors of semimartingales and apply it to the Euler--Maruyama scheme for stochastic differential equations with additive noise. Consider the following additive SDE on \([0,1]\):
\begin{equation}\label{eq:intro_sm_em_sde}
    X_t
    =
    x_0+\int_0^t f(X_{s-})\,ds+\sigma Y_t,
\end{equation}
where \(Y\) is a \(q\)-dimensional c\`adl\`ag semimartingale with
\(Y_0=0\), \(x_0\in\mathbb R^d\) is deterministic, and
\(\sigma\in\mathbb R^{d\times q}\) is a constant matrix.
For
\[
    \varphi_n(t)=\frac{\lfloor nt\rfloor}{n},
\]
the Euler--Maruyama approximation is
\begin{equation}\label{eq:intro_sm_em_scheme}
    \hat X_t^n
    =
    x_0+\int_0^t f(\hat X_{\varphi_n(s)}^n)\,ds+\sigma Y_t.
\end{equation}
Because the noise enters additively, the exact equation and its Euler approximation contain the same noise term \(\sigma Y_t\).
Thus the numerical error arises entirely from discretizing the drift integral.

For Brownian SDEs with state-dependent diffusion
coefficients, the classical Euler error is typically
of order \(n^{-1/2}\), and its normalized error process
admits a functional limit; see, in particular,
Jacod and Protter \cite{jacod1998asymptotic}.
When the diffusion coefficient is constant, the
leading \(\sqrt n\)-scale fluctuation vanishes,
and the order-\(n^{-1}\) error becomes relevant.
In the scalar additive setting, Yan
\cite[Section~6]{yan2005asymptotic}
obtains a weak functional limit at rate \(n\)
for a simplified Milstein scheme, which reduces
to the Euler scheme for constant diffusion.
Under suitable smoothness assumptions, stable
functional limits can also be obtained by
specializing the second-order expansion of
Podolskij, Veliyev and Yoshida
\cite{podolskij2020edgeworth}
and the result of Sauri
\cite[Theorem~1(2)]{sauri2025euler}\nopagebreak[4]
to additive Brownian SDEs without delay.
For stochastic Hamiltonian systems with additive
Brownian noise, Hong, Liang and Sheng
\cite[Proposition~2.6]{hong2026asymptotic}
obtain fixed-time error limits for a
\(\theta\)-method, including explicit Euler.

For equations with jumps, asymptotic Euler errors exhibit additional phenomena.
Jacod and Protter \cite{jacod1998asymptotic} and Jacod
\cite{jacod2004euler} obtain Euler error limits for jump and L\'evy-driven equations, with normalizations that may depend on the activity of the small jumps.
Wang \cite{wang2015euler} extended Jacod's analysis from pure-jump L\'evy drivers to a class of pure-jump It\^o semimartingales.
In these studies, the principal error terms arise from the discretization of stochastic integrals with state-dependent coefficients.
For the additive equation \eqref{eq:intro_sm_em_sde}, by contrast, the discretization error comes entirely from the drift integral.

More precisely, for a c\`adl\`ag semimartingale \(V\), put \(V_t^{[n]}=V_{\varphi_n(t)}\) and consider
\[
    \mathcal I_t^n(V)
    =
    n\int_0^{\varphi_n(t)}
       \{V_{s-}-V_{\varphi_n(s)}\}\,ds.
\]
Integrated discretization errors of this type were studied by Jacod, Jakubowski and M\'emin
\cite{jacod2003asymptotic}
and later by Jacod and Protter
\cite[Chapter~6]{jacod2012discretization}.
Jacod, Jakubowski and M\'emin establish tightness
for arbitrary semimartingales and identify a stable
limit under absolutely continuous characteristics
and a martingale representation property.
Their stable limit includes an auxiliary Gaussian
term and contributions from jumps carrying
independent uniform marks.
Jacod and Protter extend the limit theorem to smooth functions of multidimensional It\^o semimartingales.
Their continuous It\^o semimartingale result does not require a martingale representation property, whereas their discontinuous result is formulated under the stronger structural Assumption~(H).

Our first main result, \cref{thm:integrated_discretization_limit}, is a joint stable limit theorem for integrated discretization errors.
For an \(r\)-dimensional c\`adl\`ag semimartingale \(V\) whose predictable characteristics satisfy
\eqref{eq:app_first_companion_characteristics} and
\eqref{eq:app_first_companion_integrability}, it gives
\[
    (V^{[n]},\mathcal I^n(V))
    \Rightarrow^{\mathrm{st}}
    (V,\mathcal I(V))
\]
in \(D([0,1],\mathbb R^{2r})\) with the vector-valued Skorokhod
\(J_1\)-topology, stably with respect to the original sigma-field
\(\mathcal F\). The limiting process \(\mathcal I(V)\) is defined in
\eqref{eq:app_generic_first_companion_limit}.
\par
In particular, our theorem does not require the
martingale representation property assumed in
\cite[Theorem~1.4]{jacod2003asymptotic}.
The possibility of dispensing with this property was
already noted in \cite[Remark~(3), p.~597]{jacod2003asymptotic}.
Our theorem also avoids Assumption~(H) used for
discontinuous It\^o semimartingales in
\cite[Chapter~6]{jacod2012discretization}.
This assumption imposes additional regularity
requirements on the coefficients and, after
localization, requires a deterministic square-integrable upper bound for
\(|\delta|\wedge1\), where \(\delta\) is the jump coefficient
in the Grigelionis representation \cite[(4.4.1) and Assumption~4.4.2(iii)]{jacod2012discretization}.
Under these weaker assumptions, we identify the
Gaussian and marked jump components jointly in
the vector-valued \(J_1\)-topology.
As in \cite{jacod2012discretization}, the same
uniform mark is used for all coordinates of
each vector jump.

Our second main result, \cref{thm:sm_em_joint}, concerns the Euler--Maruyama error for the additive equation \eqref{eq:intro_sm_em_sde}. We assume
\[
    f\in C^1(\mathbb R^d;\mathbb R^d),
    \qquad
    Df\ \text{locally Lipschitz},
\]
and assume separately that the solution of
\eqref{eq:intro_sm_em_sde} is nonexplosive on \([0,1]\).
We further assume that the predictable characteristics of \(Y\) satisfy \eqref{eq:sm_ac_characteristics} and \eqref{eq:sm_ac_integrability}.

The theorem assumes nonexplosion directly, without
imposing a global growth condition on \(f\),
moment conditions on the large jumps of \(Y\),
or a rank condition on \(\sigma\).

Under additional conditions on the noise, we also obtain the same joint stable error limit for less regular drifts.
The first alternative in \cref{thm:sm_em_relaxed_regularity} applies to drivers without a continuous local martingale part and imposes a small-jump integrability condition adapted to the modulus of continuity of \(Df\).
The second applies to L\'evy drivers and combines local Sobolev regularity of \(f\) with a semigroup smoothing condition on the effective noise \(\sigma Y\).
The latter condition requires the time marginals of \(\sigma Y\) to admit densities on the state space.

Put
\[
    Y_t^{[n]}=Y_{\varphi_n(t)},
    \qquad
    \bar U_t^n
    =
    n\bigl(X_{\varphi_n(t)}-\hat X_{\varphi_n(t)}^n\bigr).
\]
Theorem~\ref{thm:sm_em_joint} states that
\[
    (Y^{[n]},\bar U^n)
    \Rightarrow^{\mathrm{st}}
    (Y,U)
\]
in \(D([0,1],\mathbb R^{q+d})\) with the
vector-valued Skorokhod \(J_1\)-topology,
stably with respect to \(\mathcal F\).
The limiting error is the unique c\`adl\`ag solution of
\[
    U_t
    =
    \mathcal Z_t+\int_0^t Df(X_{s-})U_s\,ds,
\]
where
\[
    \mathcal Z_t
    =
    \frac12\{f(X_t)-f(x_0)\}
    +
    \frac1{\sqrt{12}}
       \int_0^t Df(X_{s-})\sigma c_s^{1/2}\,dW_s
    +
    \mathcal K_t^f.
\]
Here \(c\) denotes the density of the continuous covariance characteristic of \(Y\), \(W\) is a \(q\)-dimensional Brownian motion defined on an extension of the original probability space and independent of \(\mathcal F\), and \(\mathcal K^f\) is the centered marked jump process defined in \eqref{eq:sm_centered_marked_forcing}.
If \((T_p)\) enumerates the nonzero jumps of \(Y\), then
\[
    \Delta U_{T_p}
    =
    \kappa_p
    \bigl\{
       f(X_{T_p-}+\sigma\Delta Y_{T_p})
       -
       f(X_{T_p-})
    \bigr\},
\]
where the \(\kappa_p\) are uniform on \([0,1]\), mutually independent and independent of \(W\) and \(\mathcal F\).
In particular, the jump contribution involves
the actual increment of \(f\) across a jump of
\(X\), and the same uniform mark is used for
all coordinates of that increment.
The form of these Gaussian and marked jump
terms is already known in the literature on
integrated discretization errors
\cite{jacod2003asymptotic,jacod2012discretization}.
Our contribution is to identify the joint stable
limit under weaker assumptions on the predictable
characteristics of discontinuous semimartingales
and to transfer this limit to the Euler error.

The use of the discretized processes \(Y^{[n]}\) and \(\bar U^n\) is essential when \(Y\) has jumps.
The exact solution and its Euler approximation have the same jumps, so
\(X-\hat X^n\) is continuous, whereas the limiting process \(U\) generally has jumps.
Consequently, the continuous-time error cannot converge in the \(J_1\)-topology to a discontinuous limit.
Sampling both the driver and the error on the same grid aligns their jumps associated with each jump of \(Y\), allowing joint convergence with a single time change for the whole vector.
This distinction between convergence in the vector-valued \(J_1\)-topology
and convergence in the product of the coordinatewise \(J_1\)-topologies
already appears in the Euler-error analysis of Jacod and Protter
\cite[Lemma~2.1 and Remark~3.1]{jacod1998asymptotic}.

The continuous case should be distinguished from
the discontinuous extension.
If \(Y\) is a continuous It\^o semimartingale
with absolutely continuous characteristics,
the corresponding Euler error limit for the
additive SDE can be obtained from classical results.
Under the local \(C^{1,1}\) assumption, a localization
and mollification argument shows that \(f(X)\) is a
semimartingale with absolutely continuous characteristics.
The continuous case of
\cite[Theorem~6.1.2]{jacod2012discretization}
can then be applied to the identity map on the
augmented semimartingale \((Y,f(X))\),
and the integrated-error limit can be propagated
through a linear integral equation.
For continuous drivers satisfying the assumptions of
\cref{thm:sm_em_joint} or
\cref{thm:sm_em_relaxed_regularity},
\cref{cor:sm_continuous_time_error}
gives stable convergence of the continuous-time
Euler error in the uniform topology.

Localization has been used to study asymptotic
Euler errors under local coefficient assumptions;
see, for example,
\cite{neuenkirch2009asymptotic,protter2020asymptotic}.
Reducing drift regularity is also an active topic
in Euler error limit theorems.
For uniformly elliptic Brownian SDEs, allowing
state-dependent diffusion coefficients,
Dareiotis, Gerencs\'er and L\^e
\cite{dareiotis2026central}
establish central limit theorems for
\(\sqrt n\)-normalized Euler errors under
substantially weaker drift regularity.
Altmeyer \cite{altmeyer2023central} establishes
related central limit theorems for discretized
occupation-time functionals of continuous It\^o
semimartingales, allowing test functions of low
regularity.
In our setting, the local \(C^{1,1}\) condition,
together with the additive structure,
allows us to establish a change-of-variable
formula for \(f(X)\) with absolutely continuous
semimartingale characteristics, without requiring
nondegeneracy of the noise.
Strong order-one estimates for the associated
random ordinary differential equation are also
available under stronger global regularity and
integrability assumptions \cite{kloeden2025strong}.
Our distributional result does not imply
corresponding moment bounds.
Since local \(C^{1,1}\) regularity alone does not
rule out finite-time explosion, nonexplosion is
assumed separately, with sufficient conditions
given in Remark~\ref{rem:sm_nonexplosion}.

Subsection~\ref{sec:regularity_refinements_examples}
gives a multidimensional example with a gradient
drift satisfying
\(f\in C^1\cap W_{\mathrm{loc}}^{2,p}\) but
\(f\notin C_{\mathrm{loc}}^{1,1}\).
The example applies to nondegenerate Brownian noise
and to invertible linear images of rotationally
symmetric \(\alpha\)-stable L\'evy noise with
\(1<\alpha<2\), demonstrating that condition (B)
of \cref{thm:sm_em_relaxed_regularity} genuinely
extends the local \(C^{1,1}\) assumption.
Overdamped Langevin dynamics are additive Brownian
SDEs, and their Euler discretization is the
unadjusted Langevin algorithm (ULA).
Joint stable limits of numerical errors are also
relevant to multilevel Monte Carlo methods, as
illustrated by Ben Alaya and Kebaier
\cite{benalaya2015central}, while MLMC methods
for invariant measures of Langevin dynamics
have been studied by Giles et al.
\cite{giles2020multilevel}.
The present results concern single-level Euler
errors over a fixed time horizon.

To illustrate the scope of
\cref{thm:integrated_discretization_limit},
Subsection~\ref{subsec:singular_marked_point_process}
constructs a two-dimensional marked point-process
driver based on a triangular Hawkes-type intensity
with a gamma kernel having an integrable singularity
at zero, motivated by \cite{hainaut2025rough}.
In the specified filtration, this driver satisfies
our characteristic assumptions but fails both
the martingale representation property relative
to the driver and Assumption~(H) of
\cite{jacod2012discretization}.
Thus the corresponding limit theorems of
\cite{jacod2003asymptotic,jacod2012discretization}
do not apply directly to this example.
Its application to a linear equation in
Subsection~\ref{sec:regularity_refinements_examples}
shows that jumps affecting both coordinates can
induce nonzero conditional covariance between
the limiting Euler error coordinates, because
the same auxiliary uniform mark is used for
both coordinates at each such jump.

The proof proceeds in two stages.
First, we prove \cref{thm:integrated_discretization_limit} in Appendix~\ref{app:first_integrated_companion}.
We approximate the underlying semimartingale by processes satisfying Assumption~(H), apply \cite[Theorem~6.1.2]{jacod2012discretization} to these approximations, and then remove the approximation using estimates uniform in the mesh parameter.
This argument identifies the continuous Gaussian and marked jump components jointly.
Second, in Section~\ref{sec:proof_main_theorem}, we transfer this limit to the Euler error through the augmented semimartingale
\[
    \Xi=(Y,f(X)-f(x_0)).
\]
The required absolute continuity of the characteristics of \(f(X)\) is obtained by localization and mollification under the \(C^{1,1}_{\mathrm{loc}}\) assumption.
For the relaxed regularity results,
the criterion in \cref{prop:sm_euler_transfer}
is verified either through summability of
small-jump Taylor remainders or through a
Sobolev It\^o formula using occupation estimates
obtained from L\'evy semigroup bounds.
Consistency of the Euler scheme and continuity
of the corresponding linear integral equation
then transfer the integrated-error limit to the
normalized Euler error.

The paper is organized as follows.
Section~\ref{sec:integrated_discretization_errors}
presents the general integrated discretization
theorem and a point-process example illustrating
the relaxation of the assumptions in earlier
limit theorems.
Section~\ref{sec:setting_main_result} presents the Euler--Maruyama error
theorem, the two alternatives allowing weaker drift regularity, and examples.
Section~\ref{sec:proof_main_theorem} proves the Euler error results by
applying the general theorem to the augmented semimartingale.
\par
\begin{samepage}
Appendix~\ref{app:deterministic_localization_propagation} collects the
consistency estimates and the stability result for linear integral
equations.
Appendix~\ref{app:first_integrated_companion} proves the general
integrated discretization limit by regularization and approximation.
Appendix~\ref{app:levy_smoothing}
contains the occupation estimates, a Sobolev
It\^o formula, and the localization arguments
used to prove
\cref{thm:sm_em_relaxed_regularity}
under condition~(B).
\par
\end{samepage}

\section[Integrated Discretization Errors]{Integrated Discretization Errors}
\label{sec:integrated_discretization_errors}

Let \((\Omega,\mathcal F,\mathbb F,\mathbb P)\), with
\(\mathbb F=(\mathcal F_t)_{0\le t\le1}\), satisfy the usual conditions.

The notation \(|\cdot|\) denotes the Euclidean norm for vectors and matrices; \(\|\cdot\|_{\mathrm{op}}\) denotes
the operator norm, and \(\|g\|_\infty=\sup_{t\le1}|g_t|\).
All Skorokhod spaces carry the usual \(J_1\)-topology, with one common
time change for all coordinates of a vector-valued path.

Set
\begin{equation}\label{eq:sm_phi_def}
    \varphi_n(t)
    \coloneqq
    \frac{\lfloor nt\rfloor}{n},
    \qquad
    0\le t\le1.
\end{equation}

\subsection[A joint stable limit theorem]{A joint stable limit theorem}
\label{subsec:general_integrated_discretization_limit}

We present a joint stable limit theorem for integrated discretization
errors of finite-dimensional semimartingales with absolutely continuous
predictable characteristics. No martingale representation property or
Assumption~(H) of \cite{jacod2012discretization} is imposed.

For a finite-dimensional c\`adl\`ag process \(V\), set
\begin{align}
    V_t^{[n]}
    &\coloneqq
    V_{\varphi_n(t)},
    \label{eq:app_generic_grid_process}\\
    \mathcal I_t^n(V)
    &\coloneqq
    n\int_0^{\varphi_n(t)}
    \bigl(V_{s-}-V_{\varphi_n(s)}\bigr)\,ds.
    \label{eq:app_generic_first_companion}
\end{align}
Replacing \(V_{s-}\) by \(V_s\) changes no Lebesgue integral.

Let \(V=(V^1,\ldots,V^r)\), \(r\ge1\), be a c\`adl\`ag
\(\mathbb F\)-semimartingale. Fix a bounded Borel truncation function
\(h:\mathbb R^r\to\mathbb R^r\) equal to the identity near zero.
Let \(\mu^V\) be the jump measure of \(V\) and \(\nu^V\) its predictable
compensator. Assume that the predictable characteristics relative to
\(h\) are
\begin{equation}\label{eq:app_first_companion_characteristics}
    B_t^{V}
    =
    \int_0^t b_s\,ds,
    \qquad
    C_t^{V}
    =
    \int_0^t c_s\,ds,
    \qquad
    \nu^{V}(ds,dz)
    =
    F_s(dz)\,ds.
\end{equation}
Here \(b\) is predictable, \(c\) is predictable and symmetric
nonnegative, and \(F\) is a predictable L\'evy kernel, with
\begin{equation}\label{eq:app_first_companion_integrability}
    \int_0^1
    \left(
        |b_s|
        +
        \operatorname{tr}(c_s)
        +
        \int_{\mathbb R^r}(1\wedge|z|^2)F_s(dz)
    \right)ds
    <\infty
    \qquad\text{a.s.}
\end{equation}

Choose stopping times \((T_p)\) with disjoint graphs whose finite values
enumerate exactly the nonzero jump times of \(V\).  On the very good extension of
\cite[(4.1.16)]{jacod2012discretization}, take i.i.d. uniform \([0,1]\)
marks \((\kappa_p)\) and an \(r\)-dimensional Brownian motion \(W'\),
mutually independent and independent of \(\mathcal F\).
Each mark is revealed at its jump time.
The extended filtration makes \(W'\) adapted and preserves original
martingales and semimartingale characteristics. Write
\(\widetilde{\mathbb P}\) and \(\widetilde{\mathbb E}\) for probability
and expectation on this extension.
The marked measure and compensator are
\begin{equation}\label{eq:app_generic_marked_jump_measure}
    \widetilde\mu^{V}(ds,dz,du)
    =
    \sum_{p:T_p\le1}
    \delta_{(T_p,\Delta V_{T_p},\kappa_p)}(ds,dz,du).
\end{equation}
\begin{equation}\label{eq:app_generic_marked_jump_compensator}
    \widetilde\nu^{V}(ds,dz,du)
    =
    \nu^{V}(ds,dz)\,du
    =
    F_s(dz)\,ds\,du.
\end{equation}

Writing \(V^c\) for the continuous local martingale part and
\(c^{1/2}\) for the predictable symmetric square root, set
\begin{align}
    \mathcal I_t(V)
    &\coloneqq
    \frac12 B_t^{V}
    +
    \frac12 V_t^c
    +
    \frac1{\sqrt{12}}
    \int_0^t c_s^{1/2}\,dW_s'
    \notag\\
    &\quad
    +
    \int_0^t\int_{\mathbb R^r}\int_0^1
    u h(z)
    (\widetilde\mu^{V}-\widetilde\nu^{V})(ds,dz,du)
    \notag\\
    &\quad
    +
    \int_0^t\int_{\mathbb R^r}\int_0^1
    u\{z-h(z)\}
    \widetilde\mu^{V}(ds,dz,du).
    \label{eq:app_generic_first_companion_limit}
\end{align}
The Brownian integral and the compensated marked integral are well defined
as locally square-integrable local martingales after localization.
Since \(h\) is bounded and agrees with the identity in a neighborhood of
zero, there exists a constant \(C_h<\infty\) such that
\[
    |h(z)|^2\le C_h(1\wedge|z|^2),\qquad z\in\mathbb R^r.
\]
Together with \(|c_s^{1/2}|^2=\operatorname{tr}(c_s)\), this bound and
\eqref{eq:app_first_companion_integrability} give locally finite
predictable brackets.
The uncompensated integral is a finite sum because \(z-h(z)\) vanishes
near zero.  Define also
\begin{align}
    {\mathcal K}_t(V)
    &\coloneqq
    \int_0^t\int_{\mathbb R^r}\int_0^1
    \left(u-\frac12\right)h(z)
    (\widetilde\mu^{V}-\widetilde\nu^{V})(ds,dz,du)
    \notag\\
    &\quad
    +
    \int_0^t\int_{\mathbb R^r}\int_0^1
    \left(u-\frac12\right)\{z-h(z)\}
    \widetilde\mu^{V}(ds,dz,du).
    \label{eq:app_centered_marked_companion}
\end{align}

Linearity of the compensated integrals and the canonical representation
of \(V\) relative to \(h\) give
\begin{equation}\label{eq:app_generic_first_companion_compact}
    \mathcal I_t(V)
    =
    \frac12\{V_t-V_0\}
    +
    \frac1{\sqrt{12}}
    \int_0^t c_s^{1/2}\,dW_s'
    +
    {\mathcal K}_t(V).
\end{equation}
On \(\{T_p\le1\}\), since the compensator has no time atoms,
\begin{equation}\label{eq:app_generic_companion_jump}
    \Delta\mathcal I(V)_{T_p}
    =
    \kappa_p\Delta V_{T_p}.
\end{equation}

\begin{theorem}[Joint stable limit for integrated discretization errors]
\label{thm:integrated_discretization_limit}
Under \eqref{eq:app_first_companion_characteristics} and
\eqref{eq:app_first_companion_integrability},
\begin{equation}\label{eq:app_unweighted_first_companion_limit}
    \left(
        V^{[n]},
        \mathcal I^n(V)
    \right)
    \Rightarrow^{\mathrm{st}}
    \left(
        V,
        \mathcal I(V)
    \right)
\end{equation}
in \(D([0,1],\mathbb R^{2r})\)
endowed with the usual vector-valued Skorokhod \(J_1\)-topology.  The stable
convergence is with respect to the original sigma-field \(\mathcal F\).  The
limit may be realized on the same very good filtered extension used in
\eqref{eq:app_generic_first_companion_limit}.
\end{theorem}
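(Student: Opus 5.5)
The plan follows the two-stage strategy announced in the introduction: approximate \(V\) by semimartingales satisfying Assumption~(H), apply \cite[Theorem~6.1.2]{jacod2012discretization} to the approximations, and remove the approximation by estimates uniform in \(n\).

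\emph{Step 1: localization.} By \eqref{eq:app_first_companion_integrability}, the stopping times
\[
    \tau_m=\inf\Bigl\{t:\int_0^t\bigl(|b_s|+\operatorname{tr}(c_s)+\int(1\wedge|z|^2)F_s(dz)\bigr)ds\ge m\Bigr\}\wedge1
\]
increase to \(1\) with \(\mathbb P(\tau_m<1)\to0\). Stable convergence is preserved under such localization, because both sides of \eqref{eq:app_unweighted_first_companion_limit} agree with their stopped versions on \(\{\tau_m=1\}\). We may therefore assume the integral in \eqref{eq:app_first_companion_integrability} is bounded by a constant \(m\).

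\emph{Step 2: decomposition and approximation.} Write \(V=V_0+B+V^c+h*(\mu-\nu)+(z-h)*\mu\). The big-jump part \((z-h)*\mu\) has finitely many jumps. For this piece, both \(\mathcal I^n\) and the limit can be handled pathwise: on each interval \((\frac{i-1}n,\frac in]\) containing a jump time \(T_p\), the contribution is \(\Delta V_{T_p}(\lceil nT_p\rceil-nT_p)\). By the classical result (Jacod–Protter, Lemma 6.1.x), the fractional parts \(\{nT_p\}\) converge stably to i.i.d. uniforms independent of \(\mathcal F\).

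For the remaining pieces, truncate the coefficients:
\begin{itemize}
\item replace \(b\) by \(b\mathbf 1_{\{|b|\le k\}}\) and \(c\) by \(c\mathbf 1_{\{|c|\le k\}}\);
\item replace the small-jump measure by its restriction to \(\{|z|>1/k\}\) on the set where \(F_s(|z|>1/k)\le k\).
\end{itemize}
Realize each approximation as a stochastic integral against the original \(V^c\) and \(\mu-\nu\), for instance \(V^{c,k}=\int \mathbf 1_{\{|c_s|\le k\}}dV^c_s\), and similarly for the jump part. This keeps the approximations on the same space. The resulting process \(V^{(k)}\) has bounded characteristics with finite jump activity. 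One can then choose a Grigelionis representation in which \(\delta\) is dominated by a deterministic square-integrable function, for example using a Poisson measure on \(\mathbb R^r\times[0,1]\) with a thinning construction. Hence \(V^{(k)}\) satisfies Assumption~(H).

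Applying the theorem to the identity function gives \((V^{(k),[n]},\mathcal I^n(V^{(k)}),V)\Rightarrow^{\mathrm{st}}(V^{(k)},\mathcal I(V^{(k)}),V)\), jointly with \(V\) itself because stable convergence allows adjoining \(\mathcal F\)-measurable variables. Combining this with the big-jump part in the vector \(J_1\) topology is legitimate: big jumps occur at fixed random times not charged by \(V^{(k)}\)'s limit, and the same uniform mark can be attached to each jump by sharing \(\{nT_p\}\).

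\emph{Step 3: removing the approximation (main obstacle).} We need
\[
    \lim_k\limsup_n\mathbb P\Bigl(\sup_t|\mathcal I^n_t(V-V^{(k)})|>\varepsilon\Bigr)=0,
    \qquad
    \sup_t|\mathcal I_t(V)-\mathcal I_t(V^{(k)})|\to0\ \text{in probability}.
\]
The second follows from the Lenglart inequality applied to the bracket of the difference, which tends to zero by dominated convergence in \eqref{eq:app_first_companion_integrability}. For the first, write \(\mathcal I^n(M)=n\int_0^{\varphi_n(t)}(M_s-M_{\varphi_n(s)})ds=\int_0^{\varphi_n(t)}(\lceil ns\rceil-ns)\,dM_s\) by stochastic Fubini. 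For \(M=V-V^{(k)}\):
\begin{itemize}
\item the local martingale part of \(M\) gives an integral with a deterministic integrand bounded by \(1\), so Lenglart controls it by \(\int_0^1\bigl(|c-c^{(k)}|+\int(|h(z)|^2)(F-F^{(k)})\bigr)ds\), uniformly in \(n\);
\item the drift part is bounded by \(\int|b-b^{(k)}|\,ds\).
\end{itemize}
Both vanish as \(k\to\infty\) after localization. Since \(V-V^{(k)}\) is small uniformly, we have \(V^{[n]}-V^{(k),[n]}\to0\) uniformly as well. A standard approximation lemma for stable convergence, in the form of Billingsley Theorem~3.2 with test functions of the form \(\mathbb E[Z g(\cdot)]\), then yields \eqref{eq:app_unweighted_first_companion_limit}.

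The delicate points I expect are two. First, verifying that \(V^{(k)}\) genuinely meets Assumption~(H)'s deterministic bound on \(|\delta|\wedge1\). I would arrange this by constructing the Grigelionis representation of \(V^{(k)}\) explicitly on the bounded mark space, possibly via an extension, which does not affect stable limits. Second, ensuring that the martingale part \(V^c-V^{c,(k)}\), which is not driven by \(W'\), still produces the right Gaussian limit. This is automatic because the limit of \(\mathcal I(V^{(k)})\) is consistent with that of \(\mathcal I(V)\).
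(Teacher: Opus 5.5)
Your overall architecture matches the paper's: regularize to processes satisfying (H), apply \cite[Theorem~6.1.2]{jacod2012discretization}, and remove the regularization with Lenglart bounds uniform in \(n\) via \(\mathcal I^n_t(V)=\int_{(0,\varphi_n(t)]}(\lceil ns\rceil-ns)\,dV_s\). However, the construction of the approximations fails as written.

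Assumption~(H) requires a c\`adl\`ag diffusion coefficient. Your \(c\mathbf 1_{\{|c|\le k\}}\), i.e.\ \(\int\mathbf 1_{\{|c_s|\le k\}}\,dV^c_s\), is bounded but in general not c\`adl\`ag, so Theorem~6.1.2 does not apply to your \(V^{(k)}\). The bound on \(|\delta|\wedge1\) is not the only hypothesis you need to check. The paper regularizes in time, setting \(a^{(m)}_s=m\int_{(s-1/m)^+}^s\pi_m(a_v)\,dv\) with \(a=c^{1/2}\). This is adapted, bounded and continuous, and converges to \(a\) in \(L^2(ds)\) pathwise. Since it is not an integral against \(V^c\), it is realized against the Brownian motion \(W^0\) of a Grigelionis representation on a very good extension.

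Your jump truncation also need not converge. For a rotationally symmetric \(\alpha\)-stable driver with \(\alpha>1\), \(F(\{|z|>1/k\})\asymp k^\alpha>k\) for large \(k\). The set \(\{F_s(\{|z|>1/k\})\le k\}\) is then eventually empty, all small jumps are discarded, and the bracket \(\int_0^1\int|h(z)|^2(F_s-F^{(k)}_s)(dz)\,ds\) does not tend to zero. A cap growing faster than \(k^2\) would repair this, since \(F_s(\{|z|>1/k\})\le k^2\int(1\wedge|z|^2)F_s(dz)\). The paper instead uses \(\delta\mathbf 1_{E_m}\) with \(\lambda(E_m)<\infty\).

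The more serious gap is the combination step. You get a stable limit for \(\mathcal I^n(V^{(k)})\) from Theorem~6.1.2 and a separate one for the fractional parts \(\{nT_p\}\) of the big-jump times. Two separately stably convergent sequences need not converge jointly, and adjoining \(V\) does not help because \(\{nT_p\}\) does not converge in probability. You would have to show that, in the joint limit, the big-jump marks are independent of the Gaussian term and of the marks of the jumps of \(V^{(k)}\). Nothing in your proposal yields this short of reproving Theorem~6.1.2.

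Moreover, \(V^{(k)}\) still has finitely many jumps. Adjoining \(V\) gives only the product topology, whereas convergence of \((V^{(k),[n]},\mathcal I^n(V^{(k)}))\) in the vector-valued \(J_1\)-topology requires synchronizing the jumps that both coordinates acquire at \(\lceil nT\rceil/n\).

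The paper avoids the first problem by putting all jumps, large and small, into a single (H)-process via \(\delta\mathbf 1_{E_m}\). The finitely many atoms of \(p\) carrying jumps with \(\Delta V\ne h(\Delta V)\) eventually lie in \(\mathbb R_+\times E_m\), so the error is a finite-variation term with vanishing variation, and no moment condition on large jumps is needed. It handles the second problem by applying Theorem~6.1.2 to the augmented process \((V,C)\), with \(C\) the continuous part of the regularized process. It then recovers jump times and sizes on a Skorokhod representation space and uses a synchronization lemma for finitely many jumps.
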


Here stable convergence means that, for every bounded
\(\mathcal F\)-measurable \(\xi\) and bounded continuous \(\Phi\) on
\(D([0,1],\mathbb R^{2r})\),
\begin{equation}\label{eq:sm_stable_test_function_form}
    \mathbb E\!\left[
        \xi\,\Phi(V^{[n]},\mathcal I^n(V))
    \right]
    \longrightarrow
    \widetilde{\mathbb E}\!\left[
        \xi\,\Phi(V,\mathcal I(V))
    \right].
\end{equation}
See \cite{jacod2012discretization} for background on stable
convergence.

The proof in Appendix~\ref{app:first_integrated_companion} regularizes
the characteristics, applies \cite[Theorem~6.1.2]{jacod2012discretization}
to the regularized processes, and removes the regularization using
estimates uniform in the mesh parameter.\looseness=-1

\subsection[A marked point process with singular intensity]{A marked point process with singular intensity}
\label{subsec:singular_marked_point_process}

We construct a two-dimensional marked point process that satisfies the
assumptions of \cref{thm:integrated_discretization_limit}. In the filtration
specified below, the martingale representation property relative to this
process fails, and the process does not satisfy Assumption~(H) of
\cite{jacod2012discretization}.
Let \(N^2\) be a Poisson process with rate \(\mu_2>0\).
Define the intensity by
\[
 \lambda_t=\mu_1+\eta\int_{[0,t)}k_\gamma(t-s)\,dN_s^2,
 \qquad {k_\gamma(u)=\frac{\delta^\gamma}{\Gamma(\gamma)}
                 u^{\gamma-1}e^{-\delta u},\quad u>0,}
\]
where \(0<\gamma<1\) and \(\mu_1,\eta,\delta>0\).
The Gamma kernel \(k_\gamma\) is of the form considered
in the rough Hawkes model of \cite{hainaut2025rough}.
Unlike that self-exciting model, the intensity
\(\lambda\) here depends on the exogenous Poisson process
\(N^2\).
Let \(\Pi\) be a Poisson random measure on
\([0,1]\times(0,\infty)\) with intensity \(ds\,du\),
independent of \(N^2\), and define
\[
 N_t^1
 =\int_0^t\int_0^\infty
 \mathbf1_{\{u\le\lambda_s\}}\Pi(ds,du).
\]
Conditionally on \(N^2\), the process \(N^1\) is an
inhomogeneous Poisson process with intensity \(\lambda\).
At each jump time \(T_j\) of \(N^1\), attach an independent mark
\(Z_j\) with law
\[
 \pi=p_{10}\delta_{(1,0)}+p_{01}\delta_{(0,1)}
                         +p_{11}\delta_{(1,1)},\qquad
 p_{10},p_{01},p_{11}>0,\quad p_{10}+p_{01}+p_{11}=1,
\]
and set \(Y_t=\sum_{T_j\le t}Z_j\).
The marks \(Z_j\) are mutually independent and independent
of \(N^2\) and \(\Pi\).
Work with the usual augmentation of the filtration generated by \(N^2\) and the marked point process.  Thus the filtration includes the exogenous process
\(N^2\), which determines the intensity of \(N^1\).
Since \(\int_0^\infty k_\gamma(u)\,du=1\),
\[
 \int_0^1\lambda_tdt\le\mu_1+\eta N_1^2<\infty\quad\text{a.s.},
 \qquad F_t^Y(dz)=\lambda_t\pi(dz),\quad
 b_t^Y=\lambda_t\int h(z)\pi(dz),\quad c_t^Y=0.
\]
Thus \eqref{eq:app_first_companion_characteristics} and
\eqref{eq:app_first_companion_integrability} hold with \(V=Y\) and \(r=2\).
There are almost surely no simultaneous jumps of \(N^1\) and \(N^2\).
The process \(N_t^2-\mu_2t\) is a martingale in this filtration.
Its jumps occur at times when \(Y\) does not jump,
so it cannot be represented using
\(Y^c=0\) and \(\mu^Y-\nu^Y\).
Hence the martingale representation property with respect to
\(Y\) fails in this filtration.

Assumption (H) also fails.  Its bound on the jump coefficient
\cite[Assumption~4.4.2(iii)]{jacod2012discretization} would imply,
after localization, the existence of stopping times
\(\tau_j\uparrow1\) almost surely and finite deterministic constants
\(C_j\) such that
\[
 F_t^Y(\{|z|\ge1\})\le C_j,
 \qquad dt\otimes d\mathbb P\text{-a.e. on }\{t\le\tau_j\}.
\]
Here \(F_t^Y(\{|z|\ge1\})=\lambda_t\).
Let \(S\) be the first jump time of \(N^2\).
On \(\{S<1\}\),
\[
 \lambda_{S+u}\ge\eta k_\gamma(u)
 \longrightarrow\infty
 \qquad(u\downarrow0).
\]
By Fubini's theorem, for almost every path the localized bound
holds for almost every \(t\le\tau_j\), for every \(j\).
If \(\tau_j>S\) on \(\{S<1\}\), the displayed lower bound
violates it on a nonempty interval immediately after \(S\).
It follows that \(\tau_j\le S\) on \(\{S<1\}\)
for every \(j\), almost surely.
Since \(\mathbb P(S<1)>0\), this contradicts
\(\tau_j\uparrow1\) almost surely.

\section[Euler--Maruyama Error Limits]{Euler--Maruyama Error Limits}
\label{sec:setting_main_result}
\label{sec:additive_semimartingale_em}

We apply the general theorem of Section~\ref{sec:integrated_discretization_errors}
to the Euler--Maruyama error for additive semimartingale noise on \([0,1]\).
We retain the probability space and common notation of that section.
The driver \(Y\) below plays the role of \(V\), with dimension \(r=q\);
we restate its characteristic and integrability assumptions for the Euler
error results.

We use Einstein's summation convention, with state indices \(i,j,k\)
ranging from \(1\) to \(d\) and driver indices \(\alpha,\beta,\gamma\)
from \(1\) to \(q\).

{%
We write \(f_j^i=\partial f^i/\partial x_j\) and
\(Df(x)=(f_j^i(x))_{i,j}\) for the Jacobian matrix.
When \(f\in C^2(\mathbb R^d;\mathbb R^d)\), we also write
\(f_{jk}^i=\partial^2f^i/(\partial x_j\partial x_k)\) and
\(D^2f^i(x)=(f_{jk}^i(x))_{j,k}\) for the Hessian of \(f^i\),
and \(D^2f\) for the collection of these Hessians.
}
The matrix \(\sigma=(\sigma_\alpha^i)\in\mathbb R^{d\times q}\) is
deterministic, with \((\sigma z)^i=\sigma_\alpha^iz^\alpha\).

\subsection{The equation and the Euler--Maruyama scheme}

Let \(Y\) be a \(q\)-dimensional c\`adl\`ag \(\mathbb F\)-semimartingale
with \(Y_0=0\).  Fix a bounded Borel truncation function
\(h:\mathbb R^q\to\mathbb R^q\) equal to the identity near zero.
For background on semimartingale characteristics and truncation
functions, see \cite{jacod2003limit}.
We assume that the predictable characteristics of \(Y\) relative to \(h\)
are absolutely continuous in time:
\begin{equation}\label{eq:sm_ac_characteristics}
    B_t^Y
    =
    \int_0^t b_s\,ds,
    \qquad
    C_t^Y
    =
    \int_0^t c_s\,ds,
    \qquad
    \nu^Y(ds,dz)
    =
    F_s(dz)\,ds.
\end{equation}
Let \(\mu^Y\) denote the jump measure of \(Y\), with predictable
compensator \(\nu^Y\).
Here \(b\) is predictable, \(c\) is predictable and symmetric nonnegative,
and \(F\) is a predictable L\'evy kernel.  We impose
\begin{equation}\label{eq:sm_ac_integrability}
    \int_0^1
    \left(
        |b_s|
        +
        \operatorname{tr}(c_s)
        +
        \int_{\mathbb R^q}(1\wedge|z|^2)F_s(dz)
    \right)ds
    <\infty
    \qquad\text{a.s.}
\end{equation}
No martingale representation property is assumed.

For an initial value \(x_0\in\mathbb R^d\), consider
\begin{equation}\label{eq:sm_em_sde}
    X_t^i
    =
    x_0^i
    +
    \int_0^t f^i(X_{s-})\,ds
    +
    \sigma_\alpha^iY_t^\alpha,
    \qquad
    1\le i\le d,
\end{equation}
and its Euler--Maruyama approximation
\begin{equation}\label{eq:sm_em_scheme}
    \hat X_t^{n,i}
    =
    x_0^i
    +
    \int_0^t f^i(\hat X_{\varphi_n(s)}^n)\,ds
    +
    \sigma_\alpha^iY_t^\alpha.
\end{equation}

For \cref{thm:sm_em_joint}, we assume
{%
\begin{equation}\label{eq:sm_drift_assumptions}
    f\in C^1(\mathbb R^d;\mathbb R^d),
    \qquad Df\text{ is locally Lipschitz}.
\end{equation}
We also denote this regularity by \(f\in C^{1,1}_{\mathrm{loc}}\).
Alternative assumptions are given in \cref{thm:sm_em_relaxed_regularity}.
}
In particular, \(f\) is locally Lipschitz.  Setting \(R=X-\sigma Y\) gives
the pathwise ordinary differential equation
\(R_t=x_0+\int_0^t f(R_s+\sigma Y_{s-})\,ds\), to which
Carath\'eodory theory applies and yields local existence and uniqueness.
Throughout, we assume that this solution, and hence the solution of
\eqref{eq:sm_em_sde}, exists on \([0,1]\) almost surely.
The Euler scheme is uniquely defined on the full interval
by recursion over the mesh cells.

\begin{remark}[Sufficient conditions for nonexplosion]
\leavevmode\phantomsection
\label{rem:sm_nonexplosion}
Suppose that \(f\) is locally Lipschitz.
Either the linear growth condition
\(|f(x)|\le K(1+|x|)\) for all \(x\in\mathbb R^d\)
and some \(K\ge0\), or the global one-sided
Lipschitz condition
\[
    \langle x-y,f(x)-f(y)\rangle
    \le L|x-y|^2,
    \qquad x,y\in\mathbb R^d,
\]
for some \(L\in\mathbb R\), ensures nonexplosion
for any c\`adl\`ag \(Y\).
Indeed, \(R=X-\sigma Y\) satisfies a pathwise
ordinary differential equation, and the conclusion
follows from Gronwall's inequality.

Under \eqref{eq:sm_ac_characteristics} and
\eqref{eq:sm_ac_integrability}, nonexplosion also
follows if, for some \(K\ge0\),
\begin{equation}\label{eq:sm_nonexplosion_radial}
    2\langle x,f(x)\rangle
    \le K(1+|x|^2),
    \qquad x\in\mathbb R^d.
\end{equation}
To see this, change the truncation to
\(h_0(z)=z\mathbf1_{\{|z|\le1\}}\), and denote
the corresponding drift density by \(b^{(0)}\).
Set
\[
    \widetilde Y_t
    =
    Y_t-\sum_{0<s\le t}
    \Delta Y_s\mathbf1_{\{|\Delta Y_s|>1\}},
\]
and let \(\widetilde X\) solve the equation
driven by \(\widetilde Y\).
It\^o's formula for \(V(x)=1+|x|^2\) shows that
the drift rate in the semimartingale decomposition
of \(V(\widetilde X)\) is bounded above by
\(a_sV(\widetilde X_{s-})\), where
\[
    a_s
    =K+\|\sigma\|_{\mathrm{op}}|b_s^{(0)}|
    +\|\sigma\|_{\mathrm{op}}^2
    \left(
        \operatorname{tr}(c_s)
        +\int_{|z|\le1}|z|^2F_s(dz)
    \right).
\]
By \eqref{eq:sm_ac_integrability},
\(A_1:=\int_0^1a_s\,ds<\infty\) almost surely.
For \(\rho>|x_0|\), let
\[
    \tau_\rho
    =
    \inf\{t\in[0,1]:
           |\widetilde X_t|\ge\rho\},
    \qquad
    \inf\varnothing=\infty,
\]
and put \(A_t=\int_0^t a_s\,ds\).
Then
\[
    e^{-A_{t\wedge\tau_\rho}}
    V(\widetilde X_{t\wedge\tau_\rho})
\]
is a nonnegative local supermartingale.
Consequently, for every \(m>0\),
\[
    \mathbb P(\tau_\rho\le1,\ A_1\le m)
    \le
    \frac{e^mV(x_0)}{1+\rho^2}.
\]
Letting \(\rho\to\infty\) and then
\(m\to\infty\) excludes explosion; compare
\cite[Theorem~3.5]{khasminskii2012stochastic}.
Finally, the removed jumps are almost surely
finite in number by
\eqref{eq:sm_ac_integrability}.
By localizing the state at a stopping time to
a bounded set, the same argument applies to
the restarted equation.
Thus these jumps can be restored by interlacing;
see \cite[Section~6.2]{applebaum2009levy}.
\end{remark}
\par

The exact solution and its Euler approximation have the same jumps,
\begin{equation}\label{eq:sm_common_jumps}
    \Delta X_t
    =
    \sigma\Delta Y_t
    =
    \Delta\hat X_t^n.
\end{equation}
Their difference satisfies
\begin{equation}\label{eq:sm_em_error_identity}
    X_t-\hat X_t^n
    =
    \int_0^t
    \{f(X_{s-})-f(\hat X_{\varphi_n(s)}^n)\}\,ds.
\end{equation}
Thus \(X-\hat X^n\) has absolutely continuous paths.  Define
\begin{equation}\label{eq:sm_grid_driver_error}
    Y_t^{[n]}
    \coloneqq
    Y_{\varphi_n(t)},
    \qquad
    \hat U_t^n
    \coloneqq
    n(X_t-\hat X_t^n),
    \qquad
    \bar U_t^n
    \coloneqq
    \hat U_{\varphi_n(t)}^n.
\end{equation}
The main theorem concerns the step processes \(Y^{[n]}\) and
\(\bar U^n\).
When the driver \(Y\) is continuous, we also prove
stable convergence of the continuous-time error
\(\hat U^n\) in the uniform topology.

\subsection{The limiting error process and the main theorem}

Let \(Y^c\) be the continuous local martingale part of \(Y\).
Choose stopping times \((T_p)\) with disjoint graphs whose finite values
enumerate exactly the nonzero jump times of \(Y\), as in
Subsection~\ref{subsec:general_integrated_discretization_limit}.
Use the corresponding very good filtered extension for \(Y\), with a
\(q\)-dimensional Brownian motion \(W\) and i.i.d. uniform \([0,1]\)
marks \((\kappa_p)\), mutually independent and independent of \(\mathcal F\).
The extended filtration makes \(W\) adapted and reveals each \(\kappa_p\)
at \(T_p\). We again write \(\widetilde{\mathbb P}\) and
\(\widetilde{\mathbb E}\) for probability and expectation on this extension.
The marked jump measure and its compensator are
\begin{equation}\label{eq:sm_marked_jump_measure}
    \widetilde\mu^Y(ds,dz,du)
    \coloneqq
    \sum_{p:T_p\le1}
    \delta_{(T_p,\Delta Y_{T_p},\kappa_p)}(ds,dz,du).
\end{equation}
\begin{equation}\label{eq:sm_marked_jump_compensator}
    \widetilde\nu^Y(ds,dz,du)
    =
    \nu^Y(ds,dz)\,du
    =
    F_s(dz)\,ds\,du.
\end{equation}
The same \(W\) and marks \((\kappa_p)\) are used for all coordinates of the
limit.

Let \(c^{1/2}\) denote the predictable nonnegative symmetric square root
of \(c\), and set
\begin{align}
    \Delta_f(x,z)
    &\coloneqq
    f(x+\sigma z)-f(x),
    \label{eq:sm_jump_displacement}\\
    \Theta_f(x,z)
    &\coloneqq
    \Delta_f(x,z)-Df(x)\sigma z.
    \label{eq:sm_theta_def_section}
\end{align}
Then \(\Delta_f(X_{s-},\Delta Y_s)=f(X_s)-f(X_{s-})\).

Fix a truncation function \(h_f:\mathbb R^d\to\mathbb R^d\) and define
\begin{align}
    \mathcal K_t^f
    &\coloneqq
    \int_0^t\int_{\mathbb R^q}\int_0^1
    \left(u-\frac12\right)
    h_f\!\left(\Delta_f(X_{s-},z)\right)
    (\widetilde\mu^Y-\widetilde\nu^Y)(ds,dz,du)
    \notag\\
    &\quad
    +
    \int_0^t\int_{\mathbb R^q}\int_0^1
    \left(u-\frac12\right)
    \left\{
        \Delta_f(X_{s-},z)
        -
        h_f\!\left(\Delta_f(X_{s-},z)\right)
    \right\}
    \widetilde\mu^Y(ds,dz,du).
    \label{eq:sm_centered_marked_forcing}
\end{align}

Define \(\mathcal Z\) by
\begin{equation}\label{eq:sm_em_forcing_limit}
    \mathcal Z_t
    \coloneqq
    \frac12\{f(X_t)-f(x_0)\}
    +
    \frac1{\sqrt{12}}
    \int_0^tDf(X_{s-})\sigma c_s^{1/2}\,dW_s
    +
    \mathcal K_t^f.
\end{equation}
These stochastic integrals are already well defined when \(f\in C^1\),
under \eqref{eq:sm_ac_characteristics}, \eqref{eq:sm_ac_integrability},
and nonexplosion.  Indeed, after localization to \(|X_{s-}|\le R\),
local boundedness of \(Df\) gives
\[
 |h_f(\Delta_f(X_{s-},z))|^2\le C_R(1\wedge|z|^2),\qquad
 |Df(X_{s-})\sigma c_s^{1/2}|^2\le C_R\operatorname{tr}(c_s).
\]
Moreover, \(\Delta_f-h_f(\Delta_f)\) vanishes for all sufficiently small
\(|z|\), uniformly before the stopping time.  Thus the compensated
integral has locally finite predictable bracket and the uncompensated
integral is a finite pathwise sum.  Estimates for the second-order
remainder \(\Theta_f\) are needed later for the change-of-variable
formulas, not for these definitions.
On \(\{T_p\le1\}\), since \(\widetilde\nu^Y\) has no time atoms,
\[
    \Delta\mathcal K^f_{T_p}
    =\left(\kappa_p-\frac12\right)\Delta_f(X_{T_p-},\Delta Y_{T_p}).
\]
Hence
\begin{equation}\label{eq:sm_forcing_jump}
    \Delta\mathcal Z_{T_p}
    =
    \kappa_p\Delta_f(X_{T_p-},\Delta Y_{T_p})
    =
    \kappa_p\{f(X_{T_p})-f(X_{T_p-})\}.
\end{equation}
{%
Under the hypotheses of \cref{thm:sm_em_joint} or
\cref{thm:sm_em_relaxed_regularity}, the process \(\mathcal Z\) does not
depend on the choice of \(h_f\), by
\eqref{eq:app_centered_remainder_identification} in
\cref{prop:sm_augmented_forcing_limit}.  Under the additional assumption
\(f\in C^2\), it also admits the representation \eqref{eq:sm_em_forcing_expanded}.
}

Let \(U\) be the unique c\`adl\`ag solution of
\begin{equation}\label{eq:sm_em_limit_equation}
    U_t^i
    =
    \mathcal Z_t^i
    +
    \int_0^t f_j^i(X_{s-})U_s^j\,ds,
    \qquad
    1\le i\le d.
\end{equation}
The equation is well posed pathwise because \(Df(X_-)\) is bounded on
\([0,1]\) along each nonexplosive path.  Its integral term is continuous,
so
\begin{equation}\label{eq:sm_limit_error_jump}
    \Delta U_t=\Delta\mathcal Z_t.
\end{equation}

\begin{theorem}[Euler--Maruyama error limit for additive semimartingale drivers]
\leavevmode\phantomsection % Keep the PDF target with the theorem after a page break.
\label{thm:sm_em_joint}
Assume \eqref{eq:sm_ac_characteristics},
\eqref{eq:sm_ac_integrability}, and \eqref{eq:sm_drift_assumptions}, and
assume that the solution of \eqref{eq:sm_em_sde} exists on \([0,1]\)
almost surely.  Then
\begin{equation}\label{eq:sm_em_joint_convergence}
    (Y^{[n]},\bar U^n)
    \Rightarrow^{\mathrm{st}}
    (Y,U)
\end{equation}
in \(D([0,1],\mathbb R^{q+d})\), endowed with the usual vector-valued
Skorokhod \(J_1\)-topology.  The stable convergence is with respect to the
original sigma-field \(\mathcal F\).  The limit is defined on the very good
filtered extension carrying the Brownian motion \(W\) and the uniform marks in
\eqref{eq:sm_marked_jump_measure}.
\end{theorem}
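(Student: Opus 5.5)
The plan is to apply \cref{thm:integrated_discretization_limit} to the augmented semimartingale \(\Xi=(Y,f(X)-f(x_0))\) of dimension \(r=q+d\). Then pass the resulting integrated-error limit through a linear integral equation that the normalized Euler error approximately satisfies.

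\emph{Step 1: characteristics of \(f(X)\).} I first localize with the stopping times \(\tau_R=\inf\{t:|X_t|\ge R\}\); by nonexplosion, \(\tau_R\uparrow\) beyond \(1\) a.s. I then replace \(f\) by a globally \(C^{1,1}\) function agreeing with \(f\) on the ball of radius \(R\). For \(f\in C^2\), It\^o's formula applied to \(X=x_0+\int f(X_{s-})ds+\sigma Y\) gives absolutely continuous characteristics for \(\Xi\):
\begin{itemize}
\item drift density \(Df(X_{s-})\{f(X_{s-})+\sigma b_s\}\) plus \(\tfrac12 f^i_{jk}(\sigma c_s\sigma^\top)^{jk}\) plus the jump term \(\int\{h(\Delta_f)-Df\,\sigma h(z)\}F_s(dz)\);
\item continuous part \(\int Df(X_{s-})\sigma\,dY^c\);
\item jump measure the image of \(F_s(dz)\,ds\) under \(z\mapsto(z,\Delta_f(X_{s-},z))\).
\end{itemize}
The key bound for \eqref{eq:app_first_companion_integrability} is \(|\Theta_f(x,z)|\le C_R|z|^2\) for small \(z\), which uses the local Lipschitz constant of \(Df\), together with \(|\Delta_f|\le C_R|z|\). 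To reach \(C^{1,1}_{\mathrm{loc}}\), I mollify \(f\). The second derivative term is controlled by \(\|D^2 f_\epsilon\|\le \mathrm{Lip}_R(Df)\). Passing to the limit identifies the characteristics of \(f(X)\): the drift density becomes a bounded predictable process times integrable quantities, where the \(D^2f\) term is handled through uniform bounds and weak-\(*\) limits, or more simply by showing that the total variation of the drift part is absolutely continuous with integrable density. I expect this step to be the main obstacle. The difficulty is the \(c\)-term when \(D^2 f\) does not exist, so I will only prove absolute continuity of \(B^\Xi\) with density bounded by \(C_R(1+|b_s|+\operatorname{tr}c_s+\int(1\wedge|z|^2)F_s(dz))\), rather than an explicit formula. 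That bound is all \cref{thm:integrated_discretization_limit} needs. Stable convergence localizes, since \(\tau_R\uparrow\) and the events \(\{\tau_R>1\}\) are \(\mathcal F\)-measurable, so the theorem yields \((\Xi^{[n]},\mathcal I^n(\Xi))\Rightarrow^{\mathrm{st}}(\Xi,\mathcal I(\Xi))\).

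\emph{Step 2: identify the forcing.} Using \eqref{eq:app_generic_first_companion_compact}, the \(f(X)\)-block of \(\mathcal I(\Xi)\) is \(\tfrac12\{f(X)-f(x_0)\}\) plus \(12^{-1/2}\int(c^\Xi)^{1/2}dW'\) plus \(\mathcal K(\Xi)\). The Brownian piece, restricted to these coordinates jointly with the \(Y\)-coordinates, has the same conditional law as \(12^{-1/2}\int Df(X_{s-})\sigma c_s^{1/2}dW_s\), because \(c^\Xi\) is the covariance of \((\mathrm{Id},Df\sigma)c^{1/2}\). I therefore realize it with \(W'\) chosen so that \((c^{\Xi})^{1/2}dW'\) equals \((\mathrm{Id},Df\sigma)^\top c^{1/2}dW\), a change of Brownian motion on the extension. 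The jumps of \(\Xi\) are \((\Delta Y,\Delta_f)\) with a common mark, so \(\mathcal K(\Xi)\) restricted to the \(f\)-coordinates equals \(\mathcal K^f\). This uses a change of truncation, which is allowed because \(\Delta_f-h_f(\Delta_f)\) vanishes near \(z=0\). Hence the limit is \((Y,\mathcal Z)\) jointly.

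\emph{Step 3: transfer to the Euler error.} Write \(e^n=X-\hat X^n\). From \eqref{eq:sm_em_error_identity},
\[
n e^n_{\varphi_n(t)}=n\int_0^{\varphi_n(t)}\{f(X_{s-})-f(X_{\varphi_n(s)})\}ds+n\int_0^{\varphi_n(t)}\{f(X_{\varphi_n(s)})-f(\hat X^n_{\varphi_n(s)})\}ds .
\]
The first term is exactly the \(f\)-block of \(\mathcal I^n(\Xi)\). For the second term, a Taylor expansion together with the a priori consistency estimate \(\sup_t|e^n_t|\to0\) in probability (Gronwall after localization) gives
\[
\int_0^{\varphi_n(t)}Df(X_{\varphi_n(s)})\,\bar U^n_s\,ds+o_P(1)\cdot\sup|\bar U^n|.
\]
A discrete Gronwall argument yields tightness of \(\sup|\bar U^n|\). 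Thus \(\bar U^n=\Gamma_n(\mathcal I^n_f)+o_P(1)\), where \(\Gamma_n\) solves the linear Volterra equation with kernel \(Df(X_{\varphi_n(s)})\). Since \(Df(X_{\varphi_n(\cdot)})\to Df(X_-)\) in \(L^1(ds)\) a.s., the solution map is jointly continuous in the vector \(J_1\)-topology: the integral term is continuous, so the solution adds a continuous perturbation to its input, and a common time change is preserved. The continuous mapping theorem in its stable form then gives \((Y^{[n]},\bar U^n)\Rightarrow^{\mathrm{st}}(Y,U)\).
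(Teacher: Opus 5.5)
Your proposal is correct and follows essentially the paper's route. The shared steps are the augmented semimartingale $\Xi=(Y,f(X)-f(x_0))$; mollification with a Hessian bound uniform in the mollification parameter, giving an absolutely continuous drift whose predictable density satisfies a bound like \eqref{eq:app_augmented_drift_bound}; identification of the projected limit through its $\mathcal F$-conditional law using the same $W$ and marks; and transfer via stability of the discretized linear equation, as in \cref{le:grid_linear_propagation}. The main difference is that you linearize at $Df(X_{\varphi_n(s)})$ with an $o_{\mathbb P}(1)\|\bar U^n\|_\infty$ remainder rather than using the exact averaged Jacobian $H^n$ of \eqref{eq:sm_new_mean_value_coefficient}. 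That works equally well, provided you also absorb the $O_{\mathbb P}(1/n)$ difference between $\int_0^{\varphi_n(t)}$ and $\int_0^t$ before invoking continuity of the solution map, since the integral with upper limit $\varphi_n(t)$ is a step function, not a continuous perturbation.
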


The proof of \cref{thm:sm_em_joint} is given in
Subsection~\ref{subsec:linear_propagation_main_theorem}.

\subsection[Weaker Drift Regularity and Examples]{Weaker Drift Regularity and Examples}
\label{sec:regularity_refinements_examples}
% Compatibility aliases for the regularity refinements and examples.
\label{sec:brownian_case}
\label{sec:discontinuous_examples}

The local \(C^{1,1}\) assumption in \cref{thm:sm_em_joint} is a
sufficient regularity condition for general semimartingale drivers.
The following result gives two alternative sufficient conditions.
Both yield the same limiting process and the same joint convergence in the vector-valued \(J_1\)-topology.

For \(f\in C^1(\mathbb R^d;\mathbb R^d)\) and \(R>0\), set
\begin{equation}\label{eq:sm_drift_modulus}
 \omega_R(r)=\sup\bigl\{\|Df(x)-Df(y)\|_{\mathrm{op}}:
       |x|,|y|\le R+2,\ |x-y|\le r\bigr\},\qquad r\ge0.
\end{equation}
We also write \(\varepsilon_\sigma=(1+\|\sigma\|_{\mathrm{op}})^{-1}\).
All Sobolev spaces below are defined componentwise with respect to
Lebesgue measure on the state space \(\mathbb R^d\).
For a Sobolev function, \(D^2f\) denotes its componentwise weak Hessian;
the classical notation \(f_{jk}^i\) is retained only where classical
second derivatives are assumed.

\begin{theorem}[Relaxed drift regularity]
\label{thm:sm_em_relaxed_regularity}
Assume \eqref{eq:sm_ac_characteristics} and
\eqref{eq:sm_ac_integrability}.  Suppose that
\(f\in C^1(\mathbb R^d;\mathbb R^d)\) and that the solution of
\eqref{eq:sm_em_sde} exists on \([0,1]\) almost surely.
In place of \eqref{eq:sm_drift_assumptions}, assume either of the
following conditions.
\begin{enumerate}
\item[(A)] \emph{Small-jump integrability.}  We have \(c=0\),
\(ds\otimes d\mathbb P\)-almost everywhere, and, for every \(R>0\),
\begin{equation}\label{eq:sm_jump_modulus_condition}
 \int_0^1\int_{|z|\le\varepsilon_\sigma}
       |z|\,\omega_R(\|\sigma\|_{\mathrm{op}}|z|)
       F_s(dz)\,ds<\infty\qquad\text{a.s.}
\end{equation}
\item[(B)] \emph{L\'evy transition-density estimates.}  The driver \(Y\) is an
\(\mathbb F\)-L\'evy process: its increments after time \(s\) are
independent of \(\mathcal F_s\).  For some \(p\in(1,\infty)\),
\(f\in W^{2,p}_{\mathrm{loc}}(\mathbb R^d;\mathbb R^d)\).
Writing \(p'=p/(p-1)\), the law of \(L_t=\sigma Y_t\) has a density
\(k_t\), for every \(0<t\le1\), satisfying
\begin{equation}\label{eq:levy_smoothing_K}
 \begin{gathered}
 k_t\in W^{1,1}(\mathbb R^d)\cap L^{p'}(\mathbb R^d),\qquad 0<t\le1,\\
 \int_0^1\left(\|k_t\|_{L^{p'}}+
                     \|\nabla k_t\|_{L^1}\right)dt<\infty.
 \end{gathered}
\end{equation}
The derivatives in this condition are weak derivatives.
\end{enumerate}
Then the conclusion of \cref{thm:sm_em_joint} remains valid, with
\(\mathcal K^f\), \(\mathcal Z\), and \(U\) given by
\eqref{eq:sm_centered_marked_forcing}, \eqref{eq:sm_em_forcing_limit},
and \eqref{eq:sm_em_limit_equation}, respectively.
\end{theorem}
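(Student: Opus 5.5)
The proof follows the same two-stage scheme as \cref{thm:sm_em_joint}. The only step that needs new work is showing that \(f(X)\) is a semimartingale with absolutely continuous characteristics satisfying \eqref{eq:app_first_companion_integrability}, so that \cref{thm:integrated_discretization_limit} applies to the augmented process \(\Xi=(Y,f(X)-f(x_0))\). The transfer criterion of \cref{prop:sm_euler_transfer} isolates this input. Once it holds, the consistency estimates and the stability result for linear integral equations in Appendix~\ref{app:deterministic_localization_propagation} move the joint limit of \((\Xi^{[n]},\mathcal I^n(\Xi))\) to \((Y^{[n]},\bar U^n)\). They also identify the limit as \(U\) with forcing \(\mathcal Z\): the second block of \(\mathcal I(\Xi)\) coincides with \(\mathcal Z\) by \eqref{eq:app_generic_first_companion_compact}, \eqref{eq:app_generic_companion_jump} and the identification \eqref{eq:app_centered_remainder_identification}. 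We may therefore localize to \(|X|\le R\) via stopping times and argue separately under (A) and (B).

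\emph{Under (A).} Since \(c=0\), the process \(X\) has no continuous martingale part. We write the change of variables
\[
 f(X_t)=f(x_0)+\int_0^t Df(X_{s-})\,dX_s+\sum_{s\le t}\Theta_f(X_{s-},\Delta Y_s).
\]
This holds because \(f\in C^1\) and \(X\) is the sum of a finite-variation process and a purely discontinuous part; one proves it by mollifying \(f\) and passing to the limit. The sum converges absolutely because \(|\Theta_f(x,z)|\le \|\sigma\|_{\mathrm{op}}|z|\,\omega_R(\|\sigma\|_{\mathrm{op}}|z|)\) for \(|x|\le R\) and \(|z|\le\varepsilon_\sigma\), and its compensator is finite by \eqref{eq:sm_jump_modulus_condition}. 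The small-jump part of \(\sum\Theta_f\) thus has finite variation with compensator \(\int\!\int\Theta_f(X_{s-},z)F_s(dz)\,ds\), which is absolutely continuous. The large jumps are finitely many. Reading off the characteristics of \(\Xi\) with respect to a joint truncation, the drift density is \(Df(X_{s-})\) applied to the drift of \(X\) plus \(\int(\Theta_f+\text{truncation correction})F_s\), the covariance density vanishes, and the Lévy kernel is the image of \(F_s\) under \(z\mapsto(z,\Delta_f(X_{s-},z))\). The bound \(|\Delta_f|\le C_R|z|\) gives \eqref{eq:app_first_companion_integrability}.

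\emph{Under (B).} We need an Itô formula for \(f\in W^{2,p}_{\mathrm{loc}}\) along \(X=R+L\), where \(R\) is \(C^1\) in time. The plan is again to mollify, \(f_\epsilon=f*\rho_\epsilon\), and write Itô's formula for \(f_\epsilon(X)\) with second-order terms \(\tfrac12 D^2f_\epsilon(X_{s})\sigma c\sigma^\top\) and jump remainders \(\Theta_{f_\epsilon}\). Passing \(\epsilon\to0\) requires controlling \(\mathbb E\int_0^{1\wedge\tau}|D^2(f_\epsilon-f)|(X_s)\,ds\) and the analogous jump remainder integrals. These are occupation-time quantities. Conditioning on \(\mathcal F_s\) and using the Lévy property, the law of \(X_t\) given \(\mathcal F_s\) is a shift of the law of \(L_t-L_s\) by a quantity that is not quite \(\mathcal F_s\)-measurable, because \(R\) depends on the future. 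We therefore freeze \(R\) on short intervals and control the discrepancy using \(\nabla k_t\in L^1\) together with the bounded speed of \(R\) after localization. Hölder's inequality with \(\|k_t\|_{L^{p'}}\) then bounds the occupation integral by \(\|g\|_{L^p(B_{R'})}\int_0^1\|k_t\|_{L^{p'}}dt\). For the jump remainder, Taylor's formula in integral form, \(\Theta_f(x,z)=\int_0^1(1-\theta)D^2f(x+\theta\sigma z)[\sigma z,\sigma z]\,d\theta\), reduces the small jumps to the same occupation bound weighted by \(\int_{|z|\le1}|z|^2F(dz)\). This yields absolutely continuous characteristics for \(\Xi\) with integrable densities.

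The main obstacle is this last occupation estimate under (B): handling the anticipating shift \(R\) while exploiting only transition-density bounds of \(L\), uniformly enough to pass to the mollification limit. I would first try a decomposition of \([0,1]\) into small blocks, applying the Markov-type conditioning on each block and absorbing the drift displacement through \(\|\nabla k_t\|_{L^1}\).
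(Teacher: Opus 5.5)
Your reduction to \cref{prop:sm_euler_transfer} and your treatment of (A) match the paper. You mollify, use the bound $|\Theta_f(x,z)|\le\|\sigma\|_{\mathrm{op}}|z|\,\omega_R(\|\sigma\|_{\mathrm{op}}|z|)$ for $|x|\le R$, $|z|\le\varepsilon_\sigma$, and read off $\beta_s=Df(X_{s-})\{f(X_{s-})+\sigma b_s\}+\int r_s(z)F_s(dz)$. You should add that this bound holds for $f_m=f*\varrho_m$ uniformly in $m$, because the convolution only evaluates $Df$ on $\overline B(0,R+2)$; that uniformity is what justifies dominated convergence of the small-jump sums. The genuine gap is in (B), exactly at the step you flag as the main obstacle, and the mechanism you propose there does not work.

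\emph{The occupation estimate.} What (B) needs is a shift-uniform bound $\sup_a\mathbb E\int_0^1|v(X_s+a)|\,ds\le C\|v\|_{L^p}$ for the \emph{drifted} process. Freezing the drift on a block gives $X_t=X_{t_i}+(L_t-L_{t_i})+D_t$ with $|D_t|\le Mh$. But $D_t=\int_{t_i}^t g(X_u)\,du$ depends on $L$ on $[t_i,t]$, so you cannot integrate against $k_{t-t_i}$. Moreover, for $v\in L^p$ a small displacement is uncontrollable: the envelope $\sup_{|y|\le Mh}v(\cdot+y)$ need not be in $L^p$, so ``bounded speed'' only helps for Lipschitz test functions. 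The bound $\|v\|_{L^p}\int_0^1\|k_t\|_{L^{p'}}dt$ you state is the bound for the driftless process. For $X$ one needs the density $q_t$ of $X_t$, whose existence must itself be proved.

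\emph{The missing idea} is to move the gradient onto the kernel by a Duhamel identity. Put $P_r\varphi(x)=\mathbb E\varphi(x+L_r)$ and apply It\^o's formula to $P_{t+\varepsilon-s}\varphi(X_s)$, which solves the backward equation. This gives $\mathbb E\varphi(X_t)=P_t\varphi(x_0)+\int_0^t\mathbb E[g(X_s)\cdot\nabla P_{t-s}\varphi(X_s)]\,ds$, hence
\[
 q_t=k_t(\cdot-x_0)-\sum_{j=1}^d\int_0^t(\partial_jk_{t-s})*(g^jq_s)\,ds
\]
as an $L^1$-valued Bochner integral, which already yields existence of the density. Young's inequality then gives $\|q_t\|_{L^{p'}}\le\|k_t\|_{L^{p'}}+M\int_0^tK(t-s)\|q_s\|_{L^{p'}}\,ds$ with $K(r)=\sum_j\|\partial_jk_r\|_{L^1}$. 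This cannot be closed by Gronwall, since $\|q_s\|_{L^{p'}}$ is not known a priori to be finite. The paper instead runs a contraction in $L^\infty((0,1);L^1)\cap L^1((0,1);L^{p'})$ with an $e^{-\lambda t}$ weight, which works because $\int_0^1K<\infty$. Uniqueness in the larger space $L^\infty((0,1);L^1)$ then identifies the fixed point with the law of $X_t$.

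\emph{Localization.} The Duhamel argument requires a bounded, globally Lipschitz drift, and stopping at $\tau_R$ does not produce a process to which transition-density arguments apply. The paper replaces $f$ by $f_R=\chi_Rf\in C_c^1\cap W^{2,p}$. It proves the occupation estimate and the Sobolev It\^o formula for the global solution $X^R$ of the cutoff equation, then transfers them to $f(X)$ on $[0,\tau_R]$, where $X=X^R$ including the exit jump; the exit-jump correction terms cancel. With these two ingredients, your Taylor-remainder reduction of the small jumps goes through as you describe, since the shifts $u\sigma z$ are deterministic.
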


The proof is given in
Subsection~\ref{subsec:proof_relaxed_regularity}.
Condition (A) concerns the continuous local martingale characteristic,
not the continuous finite-variation part: an absolutely continuous drift
of \(Y\) is still allowed.  Condition (B) is imposed on the effective
noise \(\sigma Y\), not merely on the untransformed driver \(Y\).

\begin{corollary}[Continuous-time convergence]
\label{cor:sm_continuous_time_error}
\label{thm:brownian_continuous_time_error}
Suppose that the hypotheses of either \cref{thm:sm_em_joint} or
\cref{thm:sm_em_relaxed_regularity} hold, and that \(Y\) is continuous.
Then, with \(\hat U^n=n(X-\hat X^n)\),
\begin{equation}\label{eq:sm_continuous_time_error_convergence}
 (Y,\hat U^n)\Rightarrow^{\mathrm{st}}(Y,U)
 \quad\text{in }C([0,1],\mathbb R^{q+d})
\end{equation}
with the uniform topology, stably with respect to \(\mathcal F\).
Here \(\mathcal K^f=0\), so the forcing in
\eqref{eq:sm_em_forcing_limit} is continuous.
\end{corollary}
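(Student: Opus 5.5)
We derive the corollary from the already established discrete-grid convergence $(Y^{[n]},\bar U^n)\Rightarrow^{\mathrm{st}}(Y,U)$ of \cref{thm:sm_em_joint} or \cref{thm:sm_em_relaxed_regularity}. Two pieces of information upgrade it to the continuous-time statement: the continuity of $Y$, and an a priori control of $\hat U^n$ between grid points.

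\emph{Step 1: the limit is continuous.} When $Y$ is continuous, $\mu^Y=0$, so $\widetilde\mu^Y=0$. Since $\nu^Y$ is the compensator of $\mu^Y=0$, we also have $\nu^Y=0$ and $\widetilde\nu^Y=0$. Hence $\mathcal K^f=0$. Moreover, $X$ is continuous, so $\mathcal Z$ is continuous by \eqref{eq:sm_em_forcing_limit}, and $U$ is continuous by \eqref{eq:sm_limit_error_jump}. Convergence in $J_1$ to a continuous limit is equivalent to locally uniform convergence. Also, the Skorokhod and uniform topologies induce the same Borel structure on continuous paths once we view $C$ inside $D$. Therefore it suffices to show that
\[
\sup_{t\le1}|Y_t-Y^{[n]}_t|+\sup_{t\le1}|\hat U^n_t-\bar U^n_t|\to0
\]
in probability. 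Stable convergence is then preserved by the standard ``asymptotically negligible perturbation'' lemma (if $Z_n\Rightarrow^{\mathrm{st}}Z$ and $Z_n'-Z_n\to0$ in probability in a metric, then $Z_n'\Rightarrow^{\mathrm{st}}Z$). Finally, uniform convergence of $(Y,\hat U^n)$ in $D$ to a continuous limit, with continuous prelimits, gives the statement in $C([0,1],\mathbb R^{q+d})$.

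\emph{Step 2: the grid gaps.} The term $\sup_t|Y_t-Y_{\varphi_n(t)}|\to0$ holds pathwise by uniform continuity of $Y$ on $[0,1]$. For the error, \eqref{eq:sm_em_error_identity} gives
\[
\hat U^n_t-\bar U^n_t=n\int_{\varphi_n(t)}^t\{f(X_{s})-f(\hat X^n_{\varphi_n(s)})\}\,ds,
\]
so
\[
\sup_t|\hat U^n_t-\bar U^n_t|\le \sup_{s\le1}|f(X_s)-f(\hat X^n_{\varphi_n(s)})|,
\]
because the interval has length at most $1/n$. We localize on $\Omega_R=\{\sup_{t\le1}|X_t|\le R\}$, whose probability tends to one by nonexplosion. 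It then suffices to prove $\sup_{s}|X_s-\hat X^n_{\varphi_n(s)}|\to0$ in probability, since $f$ is uniformly continuous on compacts. We split
\[
X_s-\hat X^n_{\varphi_n(s)}=(X_s-X_{\varphi_n(s)})+(X-\hat X^n)_{\varphi_n(s)}.
\]
The first piece tends to zero uniformly by continuity of $X$. The second equals $n^{-1}\bar U^n_{\varphi_n(s)}$, which is bounded by $n^{-1}\sup_t|\bar U^n_t|$. By the main theorem, $\bar U^n$ is tight in $D$, so $\sup_t|\bar U^n_t|$ is tight and $n^{-1}\sup_t|\bar U^n_t|\to0$ in probability. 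Alternatively, one can invoke the consistency estimates of Appendix~\ref{app:deterministic_localization_propagation}, which give $\sup_t|X_t-\hat X^n_t|\to0$ in probability. Either route ensures that $\hat X^n_{\varphi_n(s)}$ stays in a slightly larger ball with probability tending to one, which keeps the uniform continuity argument valid.

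\emph{Main obstacle.} The only delicate point is the localization in Step 2. Because $f$ is merely continuous and not globally Lipschitz, we need the Euler iterates to stay in a compact set with high probability uniformly in $n$. I would get this from the tightness of $\bar U^n$ inherited from the main theorem rather than from separate moment bounds, since moment bounds are unavailable under our assumptions. Everything else is a deterministic modulus-of-continuity argument combined with the stable-convergence perturbation lemma.
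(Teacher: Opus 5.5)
Your proposal is correct and follows essentially the paper's proof. You bound $\|\hat U^n-\bar U^n\|_\infty$ by the modulus of continuity of $X$ plus $n^{-1}\|\bar U^n\|_\infty$, which is tight by the grid-point theorem, then localize and conclude by stable Slutsky and the agreement of $J_1$ with the uniform topology on continuous paths. The only cosmetic differences are that you use uniform continuity of $f$ on compacts where the paper uses a local Lipschitz constant, and the bound $d_{J_1}\le\|\cdot\|_\infty$ where the paper uses continuity of addition at continuous limits.
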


Its proof is given in
Subsection~\ref{subsec:linear_propagation_main_theorem}.
In particular, for a standard \(q\)-dimensional
\(\mathbb F\)-Brownian motion \(Y=B\), the forcing reduces to
\begin{equation}\label{eq:brownian_forcing_limit_compact}
    \mathcal Z_t^B
    \coloneqq
    \frac12\{f(X_t)-f(x_0)\}
    +\frac1{\sqrt{12}}\int_0^t Df(X_s)\sigma\,dW_s,
\end{equation}
and the limiting error is the unique continuous solution of
\begin{equation}\label{eq:brownian_limit_equation}
    U_t^{B,i}
    =
    \mathcal Z_t^{B,i}+\int_0^t f_j^i(X_s)U_s^{B,j}\,ds.
\end{equation}
These distributional statements do not imply convergence of moments
or strong \(L^p\) error estimates.  For example, the additive equation
\(dX_t=-X_t^3dt+dB_t\), \(X_0=0\), satisfies
\cref{thm:sm_em_joint}, although the absolute moments of its explicit
Euler approximation at time \(1\) diverge as the mesh is refined;
see \cite{hutzenthaler2010strong}.

\paragraph{Jump activity.}
If \(c=0\) and
\[
 \int_0^1\int_{|z|\le1}|z|F_s(dz)\,ds<\infty\quad\text{a.s.},
\]
then (A) holds for every \(C^1\) drift.  This includes finite-activity
drivers and finite-variation drivers with the displayed compensator
integrability.  More generally, if \(0<\gamma\le1\),
\(f\in C^{1,\gamma}_{\mathrm{loc}}\), and
\[
 \int_0^1\int_{|z|\le1}|z|^{1+\gamma}F_s(dz)\,ds<\infty
 \quad\text{a.s.},
\]
then (A) holds.  For a rotationally symmetric \(\alpha\)-stable driver,
\(0<\alpha<2\), the small-jump moment of order \(r\) is finite exactly
when \(r>\alpha\).  Thus \(C^1\) suffices for \(\alpha<1\), while
\(C^{1,\gamma}_{\mathrm{loc}}\) with \(1+\gamma>\alpha\) suffices
for \(1\le\alpha<2\).
For symmetric \(\alpha\)-stable noise with \(1\le\alpha<2\),
a related It\^o formula for
\(C_b^{1,\gamma}\) test functions under \(1+\gamma>\alpha\)
is derived in
\cite[proof of Lemma~4.2, pp.~437--438]{priola2012pathwise}.
These are sufficient conditions, not optimality
claims.  In contrast to condition (B), condition (A) does not require
stationary or independent increments.

\paragraph[Examples with Gaussian and \texorpdfstring{\(\alpha\)}{alpha}-stable noise.]{Examples with Gaussian and \(\alpha\)-stable noise.}
Suppose, as required in condition (B),
that \(Y\) is an \(\mathbb F\)-L\'evy process.
Assume further that the effective noise
\(L=\sigma Y\) has the same
finite-dimensional distributions as
\(\Lambda S^{(\alpha)}+J\), where
\(1<\alpha\le2\), \(\Lambda\) is an
invertible \(d\times d\) matrix,
\(S^{(\alpha)}\) is a rotationally symmetric
\(\alpha\)-stable L\'evy process
(Brownian motion for \(\alpha=2\)),
and \(J\) is an independent arbitrary
\(d\)-dimensional L\'evy process.  Convolution with the law of \(J_t\)
preserves the bounds
\begin{equation}\label{eq:sm_stable_smoothing_bounds}
 \|k_t\|_{L^{p'}}\le Ct^{-d/(\alpha p)},\qquad
 \|\nabla k_t\|_{L^1}\le Ct^{-1/\alpha}.
\end{equation}
A verification is included in
Subsection~\ref{subsec:levy_occupation_estimates}.
Consequently (B) holds whenever
\[
 f\in C^1\cap W^{2,p}_{\mathrm{loc}},\qquad
 p>\max\{1,d/\alpha\}.
\]
This includes nondegenerate Brownian noise, every L\'evy noise with a
nondegenerate Gaussian component, and an invertible linear image of
rotationally symmetric stable noise with any independent L\'evy
component added.  No additional small-jump or large-jump moment
condition is imposed on \(J\).  Neither condition (A) nor condition (B) implies
the other in general.
For a compound Poisson driver, the small-jump
integrability requirement in (A) holds for every
\(C^1\) drift, whereas the density requirement
in (B) fails.
Conversely, the example below satisfies (B)
but not (A).

\paragraph{A multidimensional Sobolev drift.}
Let \(d\ge2\), \(0<\gamma<1\), \(\varepsilon>0\), and choose
\(\chi\in C_c^\infty(\mathbb R^d)\) equal to one near the origin.
Set
\begin{equation}\label{eq:sm_multidimensional_sobolev_example}
 V(x)=\frac12|x|^2+\varepsilon\chi(x)|x|^{2+\gamma},
 \qquad f(x)=-\nabla V(x).
\end{equation}
The drift is globally Lipschitz and \(C^1\), but is not locally
\(C^{1,1}\) at the origin.  Away from the origin it is smooth, and
\(|D^2f(x)|\le C|x|^{\gamma-1}\) near zero.  Hence
\(f\in W^{2,p}_{\mathrm{loc}}\) for
\(1<p<d/(1-\gamma)\).
Since \(d\ge2\), \(0<\gamma<1\), and \(1<\alpha\le2\),
we may take \(p=d\), because
\[
 \max\{1,d/\alpha\}<d<d/(1-\gamma).
\]
Thus condition (B) applies to the L\'evy processes
described above, including multidimensional Brownian
motion and pure-jump stable processes.
To distinguish the two sufficient conditions,
let \(1<\alpha<2\), take \(q=d\) and
\(\sigma=I_d\), and let \(Y\) be a
rotationally symmetric \(\alpha\)-stable
L\'evy process.

Near the origin, where \(\chi=1\), we have
\[
 f(x)=-x-\varepsilon(2+\gamma)|x|^\gamma x.
\]
Consequently, for sufficiently small \(r>0\),
\[
 Df(re_1)-Df(0)
 =-\varepsilon(2+\gamma)r^\gamma
       (I_d+\gamma e_1e_1^\top),
\]
and hence
\[
 \omega_R(r)
 \ge\varepsilon(2+\gamma)(1+\gamma)r^\gamma.
\]
Since the L\'evy measure is
\(F(dz)=c_{d,\alpha}|z|^{-d-\alpha}dz\),
for \(0<\gamma\le\alpha-1\) we obtain
\[
 \int_{|z|\le\varepsilon_\sigma}
       |z|\omega_R(|z|)F(dz)
 \ge C\int_0^\delta r^{\gamma-\alpha}\,dr
 =\infty
\]
for sufficiently small \(\delta>0\).

Thus condition (A) fails for this drift, whereas
condition (B) holds with \(p=d\).
This example separates the two sufficient
conditions without invoking a one-dimensional
occupation-time argument.\looseness=-1

\paragraph[Common uniform marks in a linear equation.]{Common uniform marks in a linear equation.}
Let \(Y\) be the process constructed in
Subsection~\ref{subsec:singular_marked_point_process}. The bound
\(\int_0^1\lambda_tdt<\infty\) and the finite mark set imply that
condition (A) of \cref{thm:sm_em_relaxed_regularity} applies to every
\(C^1\) drift with a nonexplosive solution.

For \(a,\beta>0\), consider
\(dX_t=-\beta X_tdt+a\,dY_t\) with deterministic \(X_0\in\mathbb R^2\).
Write \(\kappa_s\) for the uniform mark attached to the jump at time \(s\).
The same uniform mark is used for all coordinates of a vector jump.
The centered contribution of a jump at \(s\) to the limiting error at
\(t\ge s\) is
\(-a\beta e^{-\beta(t-s)}(\kappa_s-\tfrac12)\Delta Y_s\).
Consequently,
\begin{equation}\label{eq:sm_marked_point_process_covariance}
 \widetilde{\operatorname{Cov}}(U_t^1,U_t^2\mid\mathcal F)
 =\frac{a^2\beta^2}{12}\sum_{0<s\le t}
       e^{-2\beta(t-s)}\Delta Y_s^1\Delta Y_s^2.
\end{equation}
The product in this sum is nonzero precisely for
jumps with mark \((1,1)\). Since \(p_{11}>0\)
and \(\mu_1>0\), the conditional covariance is
strictly positive with positive probability for
every \(t>0\).
If independent uniform marks were used for the
two coordinates, the conditional covariance would
instead vanish, giving an incorrect joint limit.
The formula concerns the limiting error:
for each \(n\), the Euler error is
\(\mathcal F\)-measurable and hence has zero
conditional covariance given \(\mathcal F\).

\section[Proofs of the Euler Error Limits]{Proofs of the Euler Error Limits}
\label{sec:proof_main_theorem}

\subsection[From integrated discretization errors to Euler errors]{From integrated discretization errors to Euler errors}
\label{subsec:integrated_discretization_forcing}

We return to the notation of Section~\ref{sec:setting_main_result}.
Set
\begin{equation}\label{eq:sm_integrated_forcing_n}
    {\mathcal Z}_t^n
    \coloneqq
    n\int_0^{\varphi_n(t)}
    \{f(X_{s-})-f(X_{\varphi_n(s)})\}\,ds.
\end{equation}
Thus \({\mathcal Z}^n=\mathcal I^n(f(X))\).
Introduce
\begin{equation}\label{eq:sm_augmented_process_main_text}
    \Xi_t
    \coloneqq
    \begin{pmatrix}
        Y_t\\
        f(X_t)-f(x_0)
    \end{pmatrix}
    \in\mathbb R^{q+d}.
\end{equation}
A common criterion for all three regularity conditions is stated in
\cref{prop:sm_euler_transfer} below.
Linearity gives
\begin{equation}\label{eq:sm_augmented_companion_blocks}
    \mathcal I_t^n(\Xi)
    =
    \begin{pmatrix}
        \mathcal I_t^n(Y)\\
        {\mathcal Z}_t^n
    \end{pmatrix}.
\end{equation}
On the extension of Section~\ref{sec:setting_main_result}, realize
\eqref{eq:app_generic_first_companion_limit} with \(V=Y\), using the
\(q\)-dimensional Brownian motion \(W\) and the marks fixed there.
In coordinates, this gives
\begin{align}
    \mathcal I_t^\alpha(Y)
    &\coloneqq
    \frac12B_t^{Y,\alpha}
    +
    \frac12Y_t^{c,\alpha}
    +
    \frac1{\sqrt{12}}
    \int_0^t(c_s^{1/2})^{\alpha\gamma}\,dW_s^\gamma
    \notag\\
    &\quad
    +
    \int_0^t\int_{\mathbb R^q}\int_0^1
    uh^\alpha(z)
    (\widetilde\mu^Y-\widetilde\nu^Y)(ds,dz,du)
    \notag\\
    &\quad
    +
    \int_0^t\int_{\mathbb R^q}\int_0^1
    u\{z^\alpha-h^\alpha(z)\}
    \widetilde\mu^Y(ds,dz,du).
    \label{eq:sm_linear_discretization_companion}
\end{align}
In particular, on \(\{T_p\le1\}\),
\begin{equation}\label{eq:sm_driver_companion_jump}
    \Delta\mathcal I(Y)_{T_p}
    =
    \kappa_p\Delta Y_{T_p}.
\end{equation}
When \(Y=B\), \eqref{eq:sm_linear_discretization_companion} gives
\(\mathcal I(B)=B/2+W/\sqrt{12}\), with \(W\) independent of
\(\mathcal F\).

The product truncation is
\begin{equation}\label{eq:app_augmented_truncation}
    \bar h(y,v)
    \coloneqq
    \bigl(h(y),h_f(v)\bigr).
\end{equation}

\begin{proposition}[A criterion for the Euler error limit]
\leavevmode\phantomsection % Keep the PDF target with the proposition after a page break.
\label{prop:sm_euler_transfer}
% Compatibility label for the same Euler error criterion.
\label{prop:sm_augmented_forcing_limit}
Assume \eqref{eq:sm_ac_characteristics} and
\eqref{eq:sm_ac_integrability}, let \(f\in C^1\), and let \(X\) be a
nonexplosive solution of \eqref{eq:sm_em_sde}.  Put
\begin{equation}\label{eq:sm_common_truncated_correction}
 r_s(z)\mathrel{\coloneqq}h_f(\Delta_f(X_{s-},z))-Df(X_{s-})\sigma h(z).
\end{equation}

Suppose that there exists a predictable \(\mathbb R^d\)-valued process \(\beta\) such that
\begin{align}
    f(X_t)-f(x_0)
    &=
    \int_0^t\beta_s\,ds
    +
    \int_0^tDf(X_{s-})\sigma\,dY_s^c
    \notag\\*
    &\quad
    +
    \int_0^t\int_{\mathbb R^q}
    h_f\!\left(\Delta_f(X_{s-},z)\right)
    (\mu^Y-\nu^Y)(ds,dz)
    \notag\\*
    &\quad
    +
    \int_0^t\int_{\mathbb R^q}
    \left\{
        \Delta_f(X_{s-},z)
        -
        h_f\!\left(\Delta_f(X_{s-},z)\right)
    \right\}\mu^Y(ds,dz)\mathpunct{.}
    \label{eq:app_augmented_fX_decomposition}
\end{align}
Assume further that
\begin{equation}\label{eq:sm_common_calculus_integrability}
 \int_0^1|\beta_s|ds
 +\int_0^1\int_{\mathbb R^q}|r_s(z)|F_s(dz)ds
 +\sum_{0<s\le1}|\Theta_f(X_{s-},\Delta Y_s)|<\infty
 \qquad\text{a.s.}
\end{equation}

Then \eqref{eq:sm_em_joint_convergence} holds with \(U\) defined by
\eqref{eq:sm_em_limit_equation}, in the vector-valued \(J_1\)-topology
and stably with respect to \(\mathcal F\).
\end{proposition}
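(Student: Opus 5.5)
The plan is to apply \cref{thm:integrated_discretization_limit} to the augmented process \(\Xi\) of \eqref{eq:sm_augmented_process_main_text} with \(r=q+d\) and truncation \(\bar h\), and then pass from the integrated error to the Euler error through a linear integral equation.

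\textbf{Step 1: characteristics of \(\Xi\).} We first identify the characteristics of \(\Xi\) from \eqref{eq:app_augmented_fX_decomposition}. The jumps are \(\Delta\Xi_s=(\Delta Y_s,\Delta_f(X_{s-},\Delta Y_s))\). Hence \(\nu^\Xi\) is the image of \(F_s(dz)\,ds\) under \(z\mapsto(z,\Delta_f(X_{s-},z))\). The continuous martingale part is \((Y^c,\int Df(X_-)\sigma\,dY^c)\), so \(c^\Xi_s=Gc_sG^\top\) with \(G=(I_q;Df(X_{s-})\sigma)\).

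Rewriting \(f(X)\) in canonical form relative to \(\bar h\), its drift density is \(\beta_s\). For \(Y\) the drift density is \(b_s\). Each \(\Xi\)-jump is truncated coordinatewise, so no extra correction arises. Condition \eqref{eq:app_first_companion_integrability} follows after localizing \(|X_{s-}|\le R\) by stopping times; this localization is harmless for stable convergence because \(X\) is nonexplosive. Indeed \(|\Delta_f(X_{s-},z)|\wedge1\le C_R(|z|\wedge1)\), and \(\beta\) is integrable by \eqref{eq:sm_common_calculus_integrability}.

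The theorem then gives \((\Xi^{[n]},\mathcal I^n(\Xi))\Rightarrow^{\mathrm{st}}(\Xi,\mathcal I(\Xi))\). By \eqref{eq:sm_augmented_companion_blocks}, this contains \((Y^{[n]},\mathcal Z^n)\).

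\textbf{Step 2: identify the limit in terms of \(W\) and the marks.} The limit of Step 1 is built from a \((q+d)\)-dimensional Brownian motion \(W'\) and marks on the jumps of \(\Xi\). The jump times of \(\Xi\) are exactly those of \(Y\), so the same \(\kappa_p\) can be used, and the marked compensator is the image of \(\widetilde\nu^Y\).

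For the Gaussian part, \((c^\Xi)^{1/2}\,dW'\) has the same conditional law given \(\mathcal F\) as \(G c^{1/2}\,dW\) with \(W\) \(q\)-dimensional. Both are conditionally Gaussian martingales with bracket \(Gc\,G^\top\), so the stable limit law is unchanged if we realize it this way.

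Using \eqref{eq:app_generic_first_companion_compact}, the \(f\)-block of \(\mathcal I(\Xi)\) becomes
\[
\tfrac12\{f(X_t)-f(x_0)\}+\tfrac1{\sqrt{12}}\int_0^t Df(X_{s-})\sigma c_s^{1/2}\,dW_s+\mathcal K^f_t=\mathcal Z_t .
\]
For this identification, the marked integrand must be \(h_f(\Delta_f(X_{s-},z))\) against \(\widetilde\mu^Y-\widetilde\nu^Y\). The \(Y\)-block is \(\mathcal I(Y)\) as in \eqref{eq:sm_linear_discretization_companion}.

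Independence of \(\mathcal Z\) from the choice of \(h_f\) (the identity \eqref{eq:app_centered_remainder_identification}) follows here. We write \(h_f(\Delta_f)=Df\sigma h(z)+r_s(z)\). The term \(\int(u-\tfrac12)r_s\,d(\widetilde\mu-\widetilde\nu)\) splits into two pieces, each finite by \eqref{eq:sm_common_calculus_integrability}. The \(\widetilde\nu\)-piece vanishes because \(\int_0^1(u-\tfrac12)\,du=0\). The remaining sum, combined with the uncompensated part, gives \(\sum(\kappa_p-\tfrac12)\Theta_f\) plus linear terms, and it is absolutely convergent by the \(\Theta_f\) summability.

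\textbf{Step 3: transfer to the Euler error.} Write
\[
\hat U^n_t=n\int_0^t\{f(X_{s-})-f(X_{\varphi_n(s)})\}\,ds+n\int_0^t\{f(X_{\varphi_n(s)})-f(\hat X^n_{\varphi_n(s)})\}\,ds .
\]
We linearize the second integral as \(\int_0^t Df(X_{\varphi_n(s)})\hat U^n_{\varphi_n(s)}\,ds+\) remainder. On grid points this gives
\[
\bar U^n_t=\mathcal Z^n_t+\int_0^{\varphi_n(t)}A^n_s\,\bar U^n_s\,ds,
\qquad A^n_s\to Df(X_{s-}) \text{ in } L^1 \text{ pathwise}.
\]
This uses the consistency estimates of Appendix~\ref{app:deterministic_localization_propagation}. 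They show that \(\sup|X-\hat X^n|\to0\) in probability, localized so that \(\hat X^n\) stays bounded, and that \(n\sup|X-\hat X^n|\) is tight, by Gronwall applied with the tight \(\sup|\mathcal Z^n|\).

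The stability result for linear integral equations in that appendix provides a map \(z\mapsto u\) solving \(u=z+\int a\,u\). This map is continuous in the vector-valued \(J_1\) topology, jointly in \((y,z)\), because the integral term is continuous and adds no jumps. It is also continuous with respect to \(L^1\)-perturbations of the coefficient \(a\). Combining this with the stable convergence of \((Y^{[n]},\mathcal Z^n)\), and using that \(Df(X_-)\) is \(\mathcal F\)-measurable (so stability lets us carry it along), yields \eqref{eq:sm_em_joint_convergence}.

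\textbf{Main obstacle.} The hard step is Step 2: showing that the limit of \cref{thm:integrated_discretization_limit} for \(\Xi\), built on a larger extension, coincides in conditional law with \(\mathcal Z\) built on the \(q\)-dimensional \(W\) and the marks of \(Y\). The delicate parts are the degenerate covariance \(GcG^\top\) and the absolute convergence of the \(r_s\) and \(\Theta_f\) terms. I would handle this by comparing \(\mathcal F\)-conditional characteristic functionals of the two candidate limits, since both are \(\mathcal F\)-conditionally processes with independent increments.
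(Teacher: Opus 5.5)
Your proposal is correct and follows essentially the same route as the paper. You identify the characteristics of \(\Xi=(Y,f(X)-f(x_0))\) and apply \cref{thm:integrated_discretization_limit}; you realize the Gaussian block through \(\int_0^\cdot A_sc_s^{1/2}\,dW_s\) and reuse the marks of \(Y\) to identify the projected limit as \(\mathcal Z\), with the \(r\)/\(\Theta_f\) split giving independence from \(h_f\); and you pass to \(\bar U^n\) through the exact grid equation with the mean-value Jacobian and \cref{le:grid_linear_propagation}. The remainder and tightness detour in your Step~3 is unnecessary once \(A^n\) is taken to be that averaged Jacobian, but note that the grid lemma also requires \(\|A^n\|_{L^\infty}=O_{\mathbb P}(1)\).
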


\begin{proof}

\medskip
\noindent
\emph{Step 1: Characteristics of the augmented process.}\nopagebreak[4]

The canonical representation of \(f(X)\) relative to \(h_f\) is specified by
\eqref{eq:app_augmented_fX_decomposition}.
Let \(\Xi\) be defined by
\eqref{eq:sm_augmented_process_main_text}.  Then \(\Xi\) is a
\((q+d)\)-dimensional c\`adl\`ag semimartingale.  Its
predictable characteristics relative to \(\bar h\) are absolutely continuous:
\begin{equation}\label{eq:app_augmented_characteristics}
    B_t^\Xi
    =
    \int_0^t\bar b_s\,ds,
    \qquad
    C_t^\Xi
    =
    \int_0^t\bar c_s\,ds,
    \qquad
    \nu^\Xi(ds,dw)
    =
    \bar F_s(dw)\,ds.
\end{equation}
They are given as follows.  Set
\begin{equation}\label{eq:app_augmented_A_matrix}
    A_s
    =
    \begin{pmatrix}
        I_q\\
        Df(X_{s-})\sigma
    \end{pmatrix}.
\end{equation}
Then
\begin{equation}\label{eq:app_augmented_covariance}
    \bar c_s
    =
    A_sc_sA_s^\top.
\end{equation}
For every nonnegative Borel function \(g\) on \(\mathbb R^{q+d}\),
\begin{equation}\label{eq:app_augmented_jump_kernel}
    \int_{\mathbb R^{q+d}}g(w)\bar F_s(dw)
    =
    \int_{\mathbb R^q}
    g\!\left(
        z,
        \Delta_f(X_{s-},z)
    \right)F_s(dz).
\end{equation}
Finally,
\begin{equation}\label{eq:app_augmented_drift}
    \bar b_s
    =
    \begin{pmatrix}
        b_s\\
        \beta_s
    \end{pmatrix},
\end{equation}
{%
where \(\beta\) is a predictable drift density, unique
\(ds\otimes d\mathbb P\)-almost everywhere.
}
Moreover, \(\Xi\) satisfies the integrability condition required in
\cref{thm:integrated_discretization_limit}:
\begin{equation}\label{eq:app_augmented_integrability}
    \int_0^1
    \left(
        |\bar b_s|
        +
        \operatorname{tr}(\bar c_s)
        +
        \int_{\mathbb R^{q+d}}(1\wedge|w|^2)\bar F_s(dw)
    \right)ds
    <\infty
    \qquad\text{a.s.}
\end{equation}

Since \(X_-\) is predictable and \(F_s(dz)\) is a predictable kernel,
the map \((\omega,s,z)\mapsto(z,\Delta_f(X_{s-},z))\) is predictable
in \((\omega,s)\) and Borel in \(z\).  Thus its pushforward kernel
\(\bar F_s\) in \eqref{eq:app_augmented_jump_kernel} is predictable.
The jump measure of \(\Xi\) is the image of \(\mu^Y\) under this map.
For every nonnegative predictable test function \(g\), compensation gives
\[
    \mathbb E[(g*\mu^\Xi)_t]
    =\mathbb E\int_0^t\int_{\mathbb R^q}
      g\bigl(s,(z,\Delta_f(X_{s-},z))\bigr)F_s(dz)\,ds.
\]
Hence \(\bar\nu(ds,dw)=\bar F_s(dw)\,ds\) is the compensator of \(\mu^\Xi\).
The continuous local martingale
part is \(\int_0^\cdot A_s\,dY_s^c\); this identifies
\(\bar c=A c A^\top\), including the cross-covariances.
The product truncation \(\bar h\) from
\eqref{eq:app_augmented_truncation} is bounded, Borel and equal to the
identity near zero.  Combining the canonical representation of \(Y\) relative to \(h\) with
\eqref{eq:app_augmented_fX_decomposition} gives the canonical representation
of \(\Xi\) relative to \(\bar h\). Thus \(\Xi\) has the stated characteristics, with drift density \(\bar b\).

Fix a deterministic \(R>|x_0|\) and set
\[
    \tau_R\coloneqq\inf\{t\in[0,1]:|X_t|>R\}\wedge1,
    \qquad \inf\varnothing=\infty.
\]
Then \(|X_{s-}|\le R\) for \(0<s\le\tau_R\).
On \(s\le\tau_R\), local boundedness of \(Df\) gives
\[
 1\wedge|(z,\Delta_f(X_{s-},z))|^2\le C_R(1\wedge|z|^2),
 \qquad \operatorname{tr}(A_sc_sA_s^\top)\le C_R\operatorname{tr}(c_s).
\]
Together with the integrability of \(\beta\), these estimates prove
\eqref{eq:app_augmented_integrability} by nonexplosion.

\medskip
\noindent
\emph{Step 2: Limit of the integrated discretization error.}\nopagebreak[4]

We next prove
\begin{equation}\label{eq:app_augmented_forcing_convergence}
    (Y^{[n]},{\mathcal Z}^n)
    \Rightarrow^{\mathrm{st}}
    (Y,\mathcal Z)
\end{equation}
in \(D([0,1],\mathbb R^{q+d})\) with the vector-valued \(J_1\)-topology,
stably with respect to \(\mathcal F\).

{%
{%
By Step 1,
\cref{thm:integrated_discretization_limit} applies to \(\Xi\)
with truncation \(\bar h\). Hence
}
}
\begin{equation}\label{eq:app_full_augmented_companion_convergence}
    \left(
        \Xi^{[n]},
        \mathcal I^n(\Xi)
    \right)
    \Rightarrow^{\mathrm{st}}
    \left(
        \Xi,
        \mathcal I(\Xi)
    \right)
\end{equation}
in \(D([0,1],\mathbb R^{2(q+d)})\), stably with respect to \(\mathcal F\).
The deterministic projection onto the first \(q\) coordinates of
\(\Xi^{[n]}\) and the last \(d\) of \(\mathcal I^n(\Xi)\) is continuous
for the vector-valued \(J_1\)-topology.  By the continuous mapping theorem,
\((Y^{[n]},{\mathcal Z}^n)\) converges stably to the pair consisting of
\(Y\) and the last \(d\) coordinates of \(\mathcal I(\Xi)\).

Stable limits are determined by their \(\mathcal F\)-conditional law.
On the already fixed very good extension, use the existing
\(q\)-dimensional Brownian motion \(W\) to define
\[
    G_t\coloneqq\int_0^t A_sc_s^{1/2}\,dW_s.
\]
The \((q+d)\times q\) matrix \(A_sc_s^{1/2}\) satisfies
\[
    (A_sc_s^{1/2})(A_sc_s^{1/2})^\top
    =A_sc_sA_s^\top=\bar c_s.
\]
Conditionally on \(\mathcal F\), \(G\) is a centered continuous Gaussian
process with independent increments and covariance density
\(A_sc_sA_s^\top=\bar c_s\). Therefore, \(G/\sqrt{12}\) has the same
conditional law as the auxiliary Gaussian term in \(\mathcal I(\Xi)\)
supplied by \cref{thm:integrated_discretization_limit}.
The jump vector
\(\Delta\Xi_s=(\Delta Y_s,\Delta_f(X_{s-},\Delta Y_s))\) is nonzero
exactly when \(\Delta Y_s\ne0\), so \(\Xi\) and \(Y\) have the same
jump times and we may use the same exhausting sequence \((T_p)\)
and uniform marks \((\kappa_p)\).
The marked jump measure is the image of
\(\widetilde\mu^Y\) under
\[
    (s,z,u)\longmapsto
    \bigl(s,(z,\Delta_f(X_{s-},z)),u\bigr).
\]
At each jump time,
\[
    \Delta\mathcal I(\Xi)_{T_p}
    =\kappa_p\Delta\Xi_{T_p}
    =\begin{pmatrix}
        \kappa_p\Delta Y_{T_p}\\
        \kappa_p\Delta_f(X_{T_p-},\Delta Y_{T_p})
      \end{pmatrix}.
\]
Conditionally on \(\mathcal F\), the marks \((\kappa_p)\) are i.i.d.
uniform on \([0,1]\). Applying these marks to the jumps of \(\Xi\)
therefore gives the required conditional law of the jump terms in
\(\mathcal I(\Xi)\). Since the marks are also independent of \(W\),
these jump terms and \(G\) are conditionally independent given \(\mathcal F\).
Thus the conditional law of \(\mathcal I(\Xi)\) can be realized using
the same \(W\) and the same mark at each original jump.  By \eqref{eq:app_generic_first_companion_compact}, the last
\(d\) coordinates of \(\mathcal I(\Xi)\) are precisely \(\mathcal Z\), which
identifies the projected limit and proves
\eqref{eq:app_augmented_forcing_convergence}.

By \eqref{eq:sm_common_calculus_integrability},
\((|r|*\nu^Y)_1<\infty\) almost surely. Localization and the
compensation formula then imply that \((|r|*\mu^Y)_1<\infty\)
almost surely. Hence the compensated integral can be written
componentwise as \(r*(\mu^Y-\nu^Y)=r*\mu^Y-r*\nu^Y\).
Since \(r_s(z)\) does not depend on the mark \(u\), the same
integrability holds for \(|r|*\widetilde\mu^Y\) and
\(|r|*\widetilde\nu^Y\).

{%
To prove independence from \(h_f\), let \({\mathcal K}(Y)\) be the
}
process defined in \eqref{eq:app_centered_marked_companion} with
\(V=Y\).  Thus
\(\mathcal I(Y)=Y/2+\int_0^\cdot c_s^{1/2}\,dW_s/\sqrt{12}
+{\mathcal K}(Y)\).
Using the correction \(r\) from
\eqref{eq:sm_common_truncated_correction} and its integrability in
\eqref{eq:sm_common_calculus_integrability} gives
\begin{equation}\label{eq:app_centered_remainder_identification}
    \mathcal K_t^f
    =\int_0^t Df(X_{s-})\sigma\,d{\mathcal K}_s(Y)
     +\int_0^t\int_{\mathbb R^q}\int_0^1
       \left(u-\frac12\right)\Theta_f(X_{s-},z)
       \widetilde\mu^Y(ds,dz,du).
\end{equation}
Indeed, expanding the integral against \({\mathcal K}(Y)\) gives
\[
\begin{aligned}
    \int_0^\cdot Df(X_{s-})\sigma\,d{\mathcal K}_s(Y)
    &=\{(u-\tfrac12)Df(X_{s-})\sigma h(z)\}
       *(\widetilde\mu^Y-\widetilde\nu^Y)\\
    &\quad+\{(u-\tfrac12)Df(X_{s-})\sigma(z-h(z))\}
       *\widetilde\mu^Y.
\end{aligned}
\]
Subtracting this from the definition of \(\mathcal K^f\) yields
\[
\begin{aligned}
    &\mathcal K^f-\int_0^\cdot Df(X_{s-})\sigma\,d{\mathcal K}_s(Y)\\
    &\qquad=\{(u-\tfrac12)r_s(z)\}*(\widetilde\mu^Y-\widetilde\nu^Y)\\
    &\qquad\quad+\{(u-\tfrac12)(\Theta_f(X_{s-},z)-r_s(z))\}
        *\widetilde\mu^Y.
\end{aligned}
\]
The local integrability established above for
\(|r|*\widetilde\mu^Y\) and \(|r|*\widetilde\nu^Y\), together with
\(|u-\tfrac12|\le1/2\), allows the compensated integral to be written
componentwise as
\[
    \{(u-\tfrac12)r_s(z)\}*(\widetilde\mu^Y-\widetilde\nu^Y)
    =\{(u-\tfrac12)r_s(z)\}*\widetilde\mu^Y
     -\{(u-\tfrac12)r_s(z)\}*\widetilde\nu^Y.
\]
The difference between the two sides of
\eqref{eq:app_centered_remainder_identification} is therefore
\(-\{(u-\tfrac12)r_s(z)\}*\widetilde\nu^Y=0\), by Fubini and
\(\int_0^1(u-\tfrac12)\,du=0\); the \(\Theta_f\)-integral is
absolutely convergent against \(\widetilde\mu^Y\) by
\eqref{eq:sm_common_calculus_integrability}.
{%
{%
The right-hand side of \eqref{eq:app_centered_remainder_identification}
contains no \(h_f\); hence changing \(h_f\) leaves \(\mathcal K^f\),
and therefore \(\mathcal Z\), unchanged up to indistinguishability
when the same \(W\) and jump marks are used.
}
}

\medskip
\noindent
\emph{Step 3: Joint convergence with the averaged Jacobian.}\nopagebreak[4]

Define the averaged Jacobian
\begin{equation}\label{eq:sm_new_mean_value_coefficient}
    H_s^n
    \coloneqq
    \int_0^1
    Df\!\left(
        \hat X_{\varphi_n(s)}^n
        +
        r\{X_{\varphi_n(s)}-\hat X_{\varphi_n(s)}^n\}
    \right)dr.
\end{equation}

Set \(H=Df(X_-)\).
Step 2 gives the stable convergence of
\((Y^{[n]},{\mathcal Z}^n)\), while \cref{le:sm_mean_value_coefficient_convergence}
gives \(H^n\to H\) in probability in \(L^1\).
By stable Slutsky's theorem,
\begin{equation}\label{eq:app_forcing_coefficient_joint_limit}
    (Y^{[n]},{\mathcal Z}^n,H^n)
    \Rightarrow^{\mathrm{st}}
    (Y,\mathcal Z,H)
\end{equation}
in \(D([0,1],\mathbb R^{q+d})
    \times
    L^1([0,1],\mathbb R^{d\times d}),\)
where the first factor carries the usual vector-valued \(J_1\)-topology.
Moreover,
\[
    \|H^n\|_{L^\infty([0,1])}=O_{\mathbb P}(1).
\]
The stable convergence is with respect to the original sigma-field.

\medskip
\noindent
\emph{Step 4: Transfer to the Euler error.}\nopagebreak[4]

In \eqref{eq:sm_em_error_identity}, add and subtract
\(f(X_{\varphi_n(s)})\).  The mean-value formula then gives the exact identity
\begin{equation}\label{eq:sm_exact_grid_error_equation}
    \bar U_t^n
    =
    {\mathcal Z}_t^n
    +
    \int_0^{\varphi_n(t)}
    H_s^n\bar U_s^n\,ds,
    \qquad 0\le t\le1.
\end{equation}

We now use the joint convergence and the boundedness in probability of \(\|H^n\|_{L^\infty([0,1])}\) established in Step 3.
Apply \cref{le:grid_linear_propagation} with \(r=q\),
\(\Gamma^n=Y^{[n]}\), \(J^n=\mathcal Z^n\), \(U^n=\bar U^n\),
\(H^n\) as in \eqref{eq:sm_new_mean_value_coefficient}, and
\((\Gamma,J,H)=(Y,\mathcal Z,Df(X_-))\). The limiting equation is precisely
\eqref{eq:sm_em_limit_equation}, whose unique solution is \(U\).
This proves \eqref{eq:sm_em_joint_convergence}.
\end{proof}

\subsection{Proofs of the main results}
\label{subsec:verification_regularity_conditions}
\label{subsec:linear_propagation_main_theorem}
\label{subsec:proof_relaxed_regularity}

We verify the hypotheses of \cref{prop:sm_euler_transfer} under the
assumptions of \cref{thm:sm_em_joint} and under each of the two alternatives
in \cref{thm:sm_em_relaxed_regularity}. We then prove
\cref{cor:sm_continuous_time_error}.

\begin{proof}[Proof of \cref{thm:sm_em_joint}]
By \cref{prop:sm_euler_transfer}, it suffices to establish
\eqref{eq:app_augmented_fX_decomposition} with a predictable
process \(\beta\) and to verify
\eqref{eq:sm_common_calculus_integrability}.

Fix a deterministic \(R>|x_0|\) and set
\[
    \tau_R\coloneqq\inf\{t\in[0,1]:|X_t|>R\}\wedge1,
    \qquad \inf\varnothing=\infty.
\]
Then \(|X_{s-}|\le R\) for \(0<s\le\tau_R\).
For \(|x|\le R\) and \(|z|\le(1+|\sigma|)^{-1}\),
\[
    |x+\theta\sigma z|\le |x|+|\sigma||z|\le R+1,
    \qquad \theta\in[0,1],
\]
so \(x+\theta\sigma z\in\overline B(0,R+1)\).
{%
{%
We have
}
\[
 \Theta_f(x,z)=\int_0^1
       \{Df(x+u\sigma z)-Df(x)\}\sigma z\,du.
\]
Boundedness and Lipschitz continuity of \(Df\) on this ball therefore
give a deterministic constant \(K_R<\infty\) such that
}
\begin{equation}\label{eq:app_augmented_small_jump_Taylor}
    |\Delta_f(x,z)|
    \le
    K_R|z|,
    \qquad
    |\Theta_f(x,z)|
    \le
    K_R|z|^2,
    \qquad
    |z|\le(1+|\sigma|)^{-1}.
\end{equation}

With \(K_R\) fixed, choose \(0<\varepsilon_R\le(1+|\sigma|)^{-1}\) such that \(h(z)=z\)
for \(|z|\le\varepsilon_R\) and \(h_f(v)=v\) for
\(|v|\le K_R\varepsilon_R\).
Below, \(C_R<\infty\) denotes a deterministic constant, possibly changing
from line to line, which may depend on \(R\) and the fixed data.
Recall the definition of \(r\) in \eqref{eq:sm_common_truncated_correction}.
For \(s\le\tau_R\) and \(|z|\le\varepsilon_R\), we have
\(r_s(z)=\Theta_f(X_{s-},z)\).  For \(|z|>\varepsilon_R\), use boundedness
of the truncations; together with the Taylor bounds, this gives, for all
\(z\) and \(|x|\le R\),
\begin{align}
    1\wedge
    \left|
        \begin{pmatrix}
            z\\
            \Delta_f(x,z)
        \end{pmatrix}
    \right|^2
    &\le
    C_R(1\wedge|z|^2),
    \label{eq:app_augmented_jump_integrability_bound}\\
    \left|
        h_f(\Delta_f(x,z))-Df(x)\sigma h(z)
    \right|
    &\le
    C_R(1\wedge|z|^2).
    \label{eq:app_augmented_drift_correction_bound}
\end{align}
Since \(X_-\) and \(\mathbf1_{\{s\le\tau_R\}}\) are predictable,
so is the function
\((s,z)\mapsto |r_s(z)|\mathbf1_{\{s\le\tau_R\}}\).
By \eqref{eq:app_augmented_drift_correction_bound} and
\eqref{eq:sm_ac_integrability}, its integral against \(\nu^Y\) is
finite almost surely. Localization and the compensation formula give
the same conclusion for its integral against \(\mu^Y\). Consequently,
the compensated integral can be written componentwise as
\[
    (r\mathbf1_{\{s\le\tau_R\}})*(\mu^Y-\nu^Y)
    =(r\mathbf1_{\{s\le\tau_R\}})*\mu^Y
     -(r\mathbf1_{\{s\le\tau_R\}})*\nu^Y.
\]

{%
The quadratic estimate and square-summability of semimartingale jumps,
together with the fact that only finitely many jumps have magnitude greater
than \(\varepsilon_R\), give
\[
 \sum_{0<s\le\tau_R}|\Theta_f(X_{s-},\Delta Y_s)|<\infty
 \quad\text{a.s.}
\]
Large-jump terms are a finite pathwise sum, including a possible exit
jump.  As \(\tau_R=1\) eventually along each nonexplosive path, the
sum is well defined on \([0,1]\).

Let \(\rho\) be a smooth nonnegative mollifier supported in the unit ball,
with \(\int\rho=1\), and set
\(\rho_m(x)\mathrel{{\coloneqq}}m^d\rho(mx)\), \(f_m\mathrel{{\coloneqq}}f*\rho_m\).
Locally uniformly, \(f_m\to f\) and \(Df_m\to Df\).
{%
Since \(Df\) is Lipschitz on \(\overline B(0,R+2)\),
\(Df_m\) is Lipschitz on \(\overline B(0,R+1)\) with a constant
independent of \(m\ge1\). Hence
\begin{equation}\label{eq:app_mollified_hessian_bound}
 \sup_{m\ge1}\sup_{|x|\le R+1}
       \|D^2 f_m^i(x)\|_{\mathrm{op}}\le C_R,
 \qquad 1\le i\le d.
\end{equation}
}
Keep \(X\) fixed.  Applying It\^o's formula to \(f_m(X)\) gives
\[
 f_m(X_t)-f_m(x_0)=\int_0^t Df_m(X_{s-})\,dX_s+R_t^m
                  +\sum_{0<s\le t}\Theta_{f_m}(X_{s-},\Delta Y_s),
\]
where \(\Theta_{f_m}\) is defined as in \eqref{eq:sm_theta_def_section},
and, with \(Q_s\mathrel{{\coloneqq}}\sigma c_s\sigma^\top\),
\[
 R_t^{m,i}\mathrel{{\coloneqq}}\frac12\int_0^t\partial_{jk}f_m^i(X_{s-})
                \sigma_\alpha^j c_s^{\alpha\beta}\sigma_\beta^k\,ds.
\]
{%
Define \(R^f\) by
}
\begin{equation}\label{eq:app_augmented_residual_definition}
 R_t^f\mathrel{{\coloneqq}}f(X_t)-f(x_0)-\int_0^t Df(X_{s-})\,dX_s
          -\sum_{0<s\le t}\Theta_f(X_{s-},\Delta Y_s).
\end{equation}
{%
The stopped path \(X^{\tau_R}\) has bounded range almost surely,
including its value at exit, whose norm may exceed \(R\).
Local uniform convergence of \(f_m\) therefore gives
\[
 \sup_{t\le1}
 |f_m(X_{t\wedge\tau_R})-f(X_{t\wedge\tau_R})|
 \longrightarrow0\quad\text{a.s.}
\]
Since \(|X_{s-}|\le R\) for \(0<s\le\tau_R\), the predictable
integrands \(\mathbf1_{\{s\le\tau_R\}}(Df_m-Df)(X_{s-})\)
are uniformly bounded and converge pointwise. Thus
the dominated convergence theorem for stochastic integrals gives
\[
 \int_0^{\cdot\wedge\tau_R}(Df_m-Df)(X_{s-})\,dX_s
 \longrightarrow0\quad\text{ucp}.
\]
The uniform quadratic small-jump bound and the finite large-jump sum
give, by dominated convergence,
\[
 \sum_{0<s\le\tau_R}
 |(\Theta_{f_m}-\Theta_f)(X_{s-},\Delta Y_s)|
 \longrightarrow0\quad\text{a.s.}
\]
The total absolute sum also bounds the uniform difference of the
stopped jump sums.
}  Subtracting these terms from the smooth It\^o
identities proves \((R^m)^{\tau_R}\to(R^f)^{\tau_R}\) in ucp.

For every \(0\le u\le v\le1\), the uniform Hessian bound implies
\[
 |R^m_{v\wedge\tau_R}-R^m_{u\wedge\tau_R}|
 \le C_R\int_{u\wedge\tau_R}^{v\wedge\tau_R}\operatorname{tr}(Q_s)\,ds.
\]
An almost surely uniformly convergent subsequence gives the same bound
for the continuous limit \((R^f)^{\tau_R}\).  Sum over arbitrary finite
partitions of each interval and take the supremum.  {%
Almost surely, the following bound holds simultaneously for all
\(0\le u\le v\le1\):
\[
 \operatorname{Var}((R^f)^{\tau_R})_v
 -\operatorname{Var}((R^f)^{\tau_R})_u
 \le C_R\int_u^v
 \mathbf1_{\{s\le\tau_R\}}\operatorname{tr}(Q_s)\,ds.
\]
Equivalently, as measures on \([0,1]\),
}
\begin{equation}\label{eq:app_augmented_residual_variation_bound}
 d\operatorname{Var}((R^f)^{\tau_R})_s
 \le C_R\mathbf1_{\{s\le\tau_R\}}
                  \operatorname{tr}(Q_s)\,ds.
\end{equation}
Since \(\operatorname{tr}(Q_s)\le\|\sigma\|_{\mathrm{op}}^2
\operatorname{tr}(c_s)\), the right-hand side is integrable.
Taking a countable increasing sequence of radii shows that \(R^f\)
is continuous and absolutely continuous in time on \([0,1]\).
{%
Substituting \eqref{eq:sm_em_sde} into
\eqref{eq:app_augmented_residual_definition} gives
\begin{equation}\label{eq:app_augmented_generalized_ito}
\begin{split}
 f(X_t)-f(x_0)
 &=\int_0^t Df(X_{s-})f(X_{s-})\,ds
   +\int_0^t Df(X_{s-})\sigma\,dY_s\\
 &\quad+R_t^f
   +\int_0^t\int_{\mathbb R^q}
      \Theta_f(X_{s-},z)\,\mu^Y(ds,dz).
\end{split}
\end{equation}
By \eqref{eq:app_augmented_generalized_ito}, \(f(X)\) is a semimartingale,
with continuous local martingale
part \(\int_0^\cdot Df(X_{s-})\sigma\,dY_s^c\).
}

We now express the decomposition of \(f(X)\) relative to the truncation \(h_f\).
The stopped identity for \(r\) gives
\(-r*(\mu^Y-\nu^Y)+r*\mu^Y=+r*\nu^Y\), keeping the first-order
small-jump terms compensated.  The drift process is
\begin{equation}\label{eq:app_augmented_drift_process}
\begin{split}
 B_t^{f(X)}
 &=\int_0^t Df(X_{s-})f(X_{s-})\,ds
      +\int_0^t Df(X_{s-})\sigma b_s\,ds\\
 &\quad+R_t^f+\int_0^t\int_{\mathbb R^q}r_s(z)F_s(dz)\,ds.
\end{split}
\end{equation}
The process \(B^{f(X)}\) is continuous and adapted, hence predictable.
Its paths are absolutely continuous, so
\cite[Proposition~I.3.13]{jacod2003limit}, applied componentwise, yields
a predictable process \(\beta\) such that
\[
 B_t^{f(X)}=\int_0^t\beta_s\,ds.
\]
The density is unique \(ds\otimes d\mathbb P\)-almost everywhere.
}
This gives \eqref{eq:app_augmented_fX_decomposition}.
The identification of the jump kernel and continuous covariance from
this decomposition is the same as in Step 1 of the proof of
\cref{prop:sm_euler_transfer}.

For the fixed \(R\) and \(\tau_R\) above, there is a deterministic
constant \(C_R<\infty\) such that, \(ds\otimes d\mathbb P\)-almost
everywhere on \(s\le\tau_R\),
\begin{equation}\label{eq:app_augmented_drift_bound}
\begin{split}
    &|\bar b_s|+\operatorname{tr}(\bar c_s)
    +\int(1\wedge|w|^2)\bar F_s(dw)\\
    &\qquad\le C_R\left\{1+|b_s|+\operatorname{tr}(c_s)
                  +\int(1\wedge|z|^2)F_s(dz)\right\}.
\end{split}
\end{equation}

{%
The drift process and \eqref{eq:app_augmented_residual_variation_bound}
bound \(|\beta_s|\) by the right-hand side of
\eqref{eq:app_augmented_drift_bound}, \(ds\otimes d\mathbb P\)-almost
everywhere on \(s\le\tau_R\).  Also
\[
 \operatorname{tr}(A_sc_sA_s^\top)
 \le\|A_s\|_{\mathrm{op}}^2\operatorname{tr}(c_s).
\]
Together with \eqref{eq:app_augmented_jump_integrability_bound},
these estimates prove \eqref{eq:app_augmented_drift_bound}.
}
The right-hand side is integrable up to \(\tau_R\) by \eqref{eq:sm_ac_integrability}.

For \(s\le\tau_R\), the same estimates justify the limiting integrals in
\eqref{eq:sm_centered_marked_forcing} and
\eqref{eq:sm_em_forcing_limit}:
\begin{align}
    |h_f(\Delta_f(X_{s-},z))|^2
    &\le C_R(1\wedge|z|^2),
    \label{eq:sm_compact_forcing_small_jump_bound}\\
    \Delta_f(X_{s-},z)-h_f(\Delta_f(X_{s-},z))&=0
    \quad (|z|\le\varepsilon_R),
    \label{eq:sm_compact_forcing_large_jump_support}\\
    |Df(X_{s-})\sigma c_s^{1/2}|^2&\le C_R\operatorname{tr}(c_s).
    \notag
\end{align}
For the centered uniform mark \(u-\tfrac12\),
\[
    \int_0^1\left(u-\frac12\right)^2du=\frac1{12}.
\]
Thus the compensated marked integral in
\eqref{eq:sm_centered_marked_forcing} has locally finite predictable
bracket.  The uncompensated integral containing
\(\Delta_f(X_{s-},z)-h_f(\Delta_f(X_{s-},z))\) is supported on
\(|z|>\varepsilon_R\), so it has only finitely many jumps after stopping
at \(\tau_R\).  The same estimates, with \(h(z)\) and \(c^{1/2}\), show that
\(\mathcal I(Y)\) is well defined.
{%
The absolute convergence of the \(\Theta_f\)-sum was proved directly
above using the local quadratic bound and the finite large-jump sum.
}
Projection of the marked measure and compensator gives
\(|r|*\widetilde\mu^Y=|r|*\mu^Y\) and
\(|r|*\widetilde\nu^Y=|r|*\nu^Y\), so both stopped processes are locally
integrable by the argument above.  The marked jump remainder is also
absolutely convergent because the marks are in \([0,1]\).
Since \(X\) is nonexplosive and c\`adl\`ag on \([0,1]\),
\[
    \sup_{0\le s\le1}|X_s|<\infty\quad\text{almost surely}.
\]
Hence \(\tau_R=1\) for all sufficiently large \(R\), almost surely.
Letting \(R\uparrow\infty\) removes the localization, proves
\eqref{eq:app_augmented_integrability}, and justifies the limiting
stochastic integrals on \([0,1]\).
The decomposition \eqref{eq:app_augmented_fX_decomposition},
the drift bound \eqref{eq:app_augmented_drift_bound}, and the estimates
for \(r\) and \(\Theta_f\) above give
\eqref{eq:sm_common_calculus_integrability}.

Applying \cref{prop:sm_euler_transfer} proves
\eqref{eq:sm_em_joint_convergence}.
\end{proof}

\begin{proof}[Proof of \cref{thm:sm_em_relaxed_regularity}]
It suffices to verify the hypotheses of
\cref{prop:sm_euler_transfer}.
This criterion does not require the deterministic pointwise bound
\eqref{eq:app_augmented_drift_bound} under the relaxed assumptions.

\emph{Condition (A).}
Fix \(R>|x_0|\) and let
\(\tau_R=\inf\{t\in[0,1]:|X_t|>R\}\wedge1\), as in the proof of
\cref{thm:sm_em_joint}.
For \(|x|\le R\) and \(|z|\le\varepsilon_\sigma\),
\begin{equation}\label{eq:sm_relaxed_jump_remainder_bound}
 |\Theta_f(x,z)|
 \le \|\sigma\|_{\mathrm{op}}|z|
           \omega_R(\|\sigma\|_{\mathrm{op}}|z|).
\end{equation}
This follows by integrating
\(\{Df(x+u\sigma z)-Df(x)\}\sigma z\) over \(u\in[0,1]\).
The right-hand side is integrable against
\(\mathbf1_{\{s\le\tau_R\}}F_s(dz)ds\) on the indicated small-jump
region by \eqref{eq:sm_jump_modulus_condition}.
Its compensator integral is continuous and finite-valued.  Localizing
that integral and applying compensation shows that the corresponding
sum over actual small jumps is finite almost surely.  Jumps exceeding
\(\varepsilon_\sigma\) form a finite pathwise sum.

Let \(\varrho\) be a smooth nonnegative mollifier supported in the unit
ball and put \(f_m=f*\varrho_m\).  On the same region,
\eqref{eq:sm_relaxed_jump_remainder_bound} holds for \(f_m\), uniformly
in \(m\ge1\): the shifted points used in the convolution lie in
\(\overline B(0,R+2)\), which is the ball used in
\eqref{eq:sm_drift_modulus}.  Also \(f_m\to f\) and \(Df_m\to Df\)
locally uniformly.
Because \(c=0\), the continuous quadratic-variation term in
It\^o's formula for \(f_m(X)\) vanishes.
The stochastic-integral convergence and the treatment of a finite exit
jump are exactly as in the mollification argument in the proof of
\cref{thm:sm_em_joint}.
The small-jump sums converge in total variation by the preceding
summable bound.  We obtain, first stopped at \(\tau_R\) and then by
nonexplosion on the full interval,
\begin{equation}\label{eq:sm_groupA_generalized_ito}
 f(X_t)-f(x_0)
 =\int_0^t Df(X_{s-})\,dX_s
        +\sum_{0<s\le t}\Theta_f(X_{s-},\Delta Y_s).
\end{equation}

Recall the definition of \(r\) in \eqref{eq:sm_common_truncated_correction}.
On \(s\le\tau_R\), choose a deterministic
\(0<\varepsilon_R\le\varepsilon_\sigma\) small enough that
\(r_s(z)=\Theta_f(X_{s-},z)\) when \(|z|\le\varepsilon_R\).
For larger \(z\), boundedness of the two truncations and local
boundedness of \(Df\) give \(|r_s(z)|\le C_R\), while
\[
 \int_0^1 F_s(\{|z|>\varepsilon_R\})ds<\infty\quad\text{a.s.}
\]
Thus \(|r|*\nu^Y\) is finite on each stopped interval and hence on
\([0,1]\).  Using the same change of truncation as in
\eqref{eq:app_augmented_drift_process}, formula
\eqref{eq:sm_groupA_generalized_ito} gives
\eqref{eq:app_augmented_fX_decomposition} with
\begin{equation}\label{eq:sm_groupA_beta}
 \beta_s=Df(X_{s-})\{f(X_{s-})+\sigma b_s\}
                    +\int_{\mathbb R^q}r_s(z)F_s(dz).
\end{equation}
Its absolute time integral is finite by the original characteristic
assumptions, the preceding estimates, and nonexplosion.
The absolute summability of the Taylor remainders
\(\Theta_f(X_{s-},\Delta Y_s)\) over \(0<s\le1\) was already established, proving
\eqref{eq:sm_common_calculus_integrability}.

\emph{Condition (B).}
\Cref{le:levy_smoothing_local_characteristics} in
Appendix~\ref{app:levy_smoothing} verifies precisely the decomposition
and integrability required by \cref{prop:sm_euler_transfer}.
Applying \cref{prop:sm_euler_transfer} completes the proof in this case
as well.
\end{proof}

\begin{proof}[Proof of \cref{cor:sm_continuous_time_error}]
Since \(Y\) is continuous, \(X\) is continuous and \(\mathcal K^f=0\).
The forcing in \eqref{eq:sm_em_forcing_limit} and the solution of
\eqref{eq:sm_em_limit_equation} are therefore continuous.
The applicable result, \cref{thm:sm_em_joint} or
\cref{thm:sm_em_relaxed_regularity}, gives
\begin{equation}\label{eq:brownian_grid_time_limit}
    (Y^{[n]},\bar U^n)
    \Rightarrow^{\mathrm{st}}
    (Y,U)
\end{equation}
in the vector-valued \(J_1\)-topology, stably with respect to
\(\mathcal F\).  In particular, \(\|\bar U^n\|_\infty=O_{\mathbb P}(1)\).
For a continuous path \(g\), let
\(\omega_g(\delta)\coloneqq\sup_{|s-t|\le\delta}|g_s-g_t|\).
Then \(\|Y-Y^{[n]}\|_\infty\le\omega_Y(1/n)\to0\) almost surely.

Put \(\Omega_{\rho,n}\coloneqq\{\sup_{t\le1}(|X_t|\vee|\hat X_t^n|)\le\rho\}\)
and let \(L_\rho\) be a Lipschitz constant of \(f\) on this ball.
By \eqref{eq:sm_em_error_identity},
\[
    \hat U_t^n-\bar U_t^n
    =n\int_{\varphi_n(t)}^t
       \{f(X_s)-f(\hat X_{\varphi_n(t)}^n)\}\,ds.
\]
Using \(X_{\varphi_n(t)}-\hat X_{\varphi_n(t)}^n
=\bar U_{\varphi_n(t)}^n/n\), we obtain
\begin{equation}\label{eq:brownian_mesh_error_bound}
    \|\hat U^n-\bar U^n\|_\infty
    \le L_\rho\omega_X(1/n)
       +\frac{L_\rho}{n}\|\bar U^n\|_\infty
    \qquad\text{on }\Omega_{\rho,n}.
\end{equation}
The right-hand side tends to zero in probability for fixed \(\rho\).
Combining this estimate with \eqref{eq:sm_em_localization} at \(T=1\)
and then letting \(\rho\uparrow\infty\) gives
\begin{equation}\label{eq:brownian_error_mesh_recovery}
    \|\hat U^n-\bar U^n\|_\infty
    \xrightarrow{\mathbb P}0.
\end{equation}

By \(\|Y-Y^{[n]}\|_\infty\to0\) almost surely and
\eqref{eq:brownian_error_mesh_recovery}, the perturbation
\((Y-Y^{[n]},\hat U^n-\bar U^n)\) tends to zero in probability
uniformly, hence also in the vector-valued \(J_1\)-topology.
By stable Slutsky's theorem, it converges jointly stably with the pair in
\eqref{eq:brownian_grid_time_limit}.
Since the perturbation converges to the continuous zero path,
addition is continuous in the vector-valued \(J_1\)-topology at the
limiting pair. The continuous mapping theorem therefore gives
\[
    (Y,\hat U^n)
    \Rightarrow^{\mathrm{st}}
    (Y,U)
\]
in \(D([0,1],\mathbb R^{q+d})\) with the vector-valued \(J_1\)-topology.
All prelimit and limit paths are continuous, and
the \(J_1\)-topology induced on \(C([0,1],\mathbb R^{q+d})\)
agrees with the uniform topology; hence the same
\(\mathcal F\)-stable convergence holds in the asserted continuous-path space.
\end{proof}

{%
\paragraph{The \(C^2\) case.}
\leavevmode
Assume the hypotheses of \cref{thm:sm_em_joint} and, in addition,
\(f\in C^2(\mathbb R^d;\mathbb R^d)\).
}

{%
For the process \(R^f\) defined in
\eqref{eq:app_augmented_residual_definition},
It\^o's formula identifies
\[
 R_t^{f,i}=\frac12\int_0^t f_{jk}^i(X_{s-})
               \sigma_\alpha^j c_s^{\alpha\beta}\sigma_\beta^k\,ds,
\]
and the residual identity becomes
}
\begin{align}
    f^i(X_t)-f^i(x_0)
    &=
    \int_0^t f_j^i(X_{s-})f^j(X_{s-})\,ds
    +
    \int_0^t f_j^i(X_{s-})\sigma_\alpha^j\,dY_s^\alpha
    \notag\\
    &\quad
    +
    \frac12\int_0^t
    f_{jk}^i(X_{s-})
    \sigma_\alpha^jc_s^{\alpha\beta}\sigma_\beta^k\,ds
    \notag\\
    &\quad
    +
    \int_0^t\int_{\mathbb R^q}
    \Theta_f^i(X_{s-},z)\,\mu^Y(ds,dz).
    \label{eq:detailed_augmented_raw_ito}
\end{align}
{%
Substitution into \eqref{eq:app_augmented_drift_process} gives
\eqref{eq:app_augmented_beta}.
}
\begin{align}
    \beta_s^i
    &=
    f_j^i(X_{s-})f^j(X_{s-})
    +
    f_j^i(X_{s-})\sigma_\alpha^j b_s^\alpha
    \notag\\
    &\quad
    +
    \frac12
    f_{jk}^i(X_{s-})
    \sigma_\alpha^j c_s^{\alpha\beta}\sigma_\beta^k
    \notag\\
    &\quad
    +
    \int_{\mathbb R^q}
    \left\{
        h_f^i\!\left(\Delta_f(X_{s-},z)\right)
        -
        f_j^i(X_{s-})\sigma_\alpha^j h^\alpha(z)
    \right\}F_s(dz).
    \label{eq:app_augmented_beta}
\end{align}

{%
Moreover,
the process \(\mathcal Z\) defined in
\eqref{eq:sm_em_forcing_limit} admits the representation below,
with \(\mathcal I(Y)\) as in \eqref{eq:sm_linear_discretization_companion}:
}
\begin{align}
    \mathcal Z_t^i
    & =
    \frac12
    \int_0^t
    f_j^i(X_{s-})f^j(X_{s-})\,ds
    +
    \int_0^t
    f_j^i(X_{s-})\sigma_\alpha^j\,d\mathcal I_s^\alpha(Y)
    \notag\\
    &\quad
    +
    \frac14
    \int_0^t
    f_{jk}^i(X_{s-})
    \sigma_\alpha^jc_s^{\alpha\beta}\sigma_\beta^k\,ds
    \notag\\
    &\quad
    +
    \int_0^t\int_{\mathbb R^q}\int_0^1
    u\Theta_f^i(X_{s-},z)
    \widetilde\mu^Y(ds,dz,du).
    \label{eq:sm_em_forcing_expanded}
\end{align}
{%
Indeed, insert half of It\^o's formula
\eqref{eq:detailed_augmented_raw_ito} and
\eqref{eq:app_centered_remainder_identification} {%
into \eqref{eq:sm_em_forcing_limit}.
}
The linear stochastic terms combine using
\(\mathcal I(Y)=Y/2+\int_0^\cdot c_s^{1/2}\,dW_s/\sqrt{12}
+{\mathcal K}(Y)\), and the jump remainders combine as
\[
 \tfrac12\Theta_f*\mu^Y
       +\{(u-\tfrac12)\Theta_f\}*\widetilde\mu^Y
       =\{u\Theta_f\}*\widetilde\mu^Y.
\]
The jump integral is absolutely convergent by the local quadratic bound
and the finite large-jump sum.  The coefficient \(1/4\) is the product
of the factor \(1/2\) {%
in \eqref{eq:sm_em_forcing_limit}
} and the factor \(1/2\)
in It\^o's formula.
}

\appendix

\section{Auxiliary Consistency and Linear Stability Results}
\label{app:deterministic_localization_propagation}

\subsection{Consistency and coefficient convergence}

\begin{lemma}[\(L^1\) convergence of grid approximations]
\label{le:cadlag_mesh_L1}
Let \(g:[0,T]\to\mathbb R^m\) be c\`adl\`ag, where \(T\le1\).  Then
\begin{equation}\label{eq:cadlag_mesh_L1_basic}
    \int_0^T
    |g_{s-}-g_{\varphi_n(s)}|\,ds
    \longrightarrow0.
\end{equation}
If \(G:\mathbb R^m\to\mathbb R^\ell\) is continuous, then the same conclusion
holds with \(g\) replaced by \(G\circ g\).
\end{lemma}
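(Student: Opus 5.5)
The argument is pathwise and deterministic, based on dominated convergence. A càdlàg function on $[0,T]$ is bounded, so the integrand $|g_{s-}-g_{\varphi_n(s)}|$ is bounded by $2\sup_{t\le T}|g_t|<\infty$ uniformly in $n$ and $s$. Since $[0,T]$ has finite Lebesgue measure, it suffices to show that the integrand tends to zero for Lebesgue-almost every $s\in[0,T]$.

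For the pointwise convergence, I will use the fact that a càdlàg function has at most countably many discontinuity points. Hence the set $N=\{s:\Delta g_s\neq0\}$ is Lebesgue-null. Fix $s\in(0,T]\setminus N$. Then $g_{s-}=g_s$, and $\varphi_n(s)\uparrow s$ with $\varphi_n(s)\le s$ and $s-\varphi_n(s)<1/n$. Two cases arise.
\begin{itemize}
\item If $\varphi_n(s)=s$ for infinitely many $n$, those terms vanish trivially.
\item Along the terms with $\varphi_n(s)<s$, the left limit at $s$ gives $g_{\varphi_n(s)}\to g_{s-}=g_s$.
\end{itemize}
Alternatively, one can note that $g$ is continuous at $s$ and that $\varphi_n(s)\to s$. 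In either case $|g_{s-}-g_{\varphi_n(s)}|\to0$ off the null set $N\cup\{0\}$. Dominated convergence then yields \eqref{eq:cadlag_mesh_L1_basic}.

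For the second assertion, I will observe that if $G$ is continuous then $G\circ g$ is again càdlàg. Moreover, its left limit is $(G\circ g)_{s-}=G(g_{s-})$, by continuity of $G$ applied to the left limit. So the first part applies directly to $G\circ g$. One could also see boundedness directly: the range of $g$ lies in a compact set, namely the closure of a bounded set, on which $G$ is bounded.

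There is no real obstacle here. The only point needing care is the identification $(G\circ g)_{s-}=G(g_{s-})$, together with the null-set argument for the countably many jumps. Both are standard.
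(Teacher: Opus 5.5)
Your proof is correct and is essentially the paper's argument: the càdlàg path is bounded, the integrand converges to zero at every continuity point (the discontinuity set is countable, hence Lebesgue-null), dominated convergence finishes the first claim, and $G\circ g$ is again càdlàg with $(G\circ g)_{s-}=G(g_{s-})$, so the same argument applies to it. One small inaccuracy: $\varphi_n(s)$ need not increase monotonically in $n$, so the notation $\varphi_n(s)\uparrow s$ is misleading, but your argument only uses $\varphi_n(s)\le s$ and $s-\varphi_n(s)<1/n$, which do hold.
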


\begin{proof}
A c\`adl\`ag path is bounded and continuous outside a countable set.
At every continuity point, \(g_{\varphi_n(s)}\to g_s=g_{s-}\), so
dominated convergence gives \eqref{eq:cadlag_mesh_L1_basic}.
The same argument applies to the c\`adl\`ag path \(G\circ g\).
\end{proof}

\begin{lemma}[Consistency and compact containment of the Euler scheme]
\label{le:sm_em_consistency_localization}
Let \(X\) and \(\hat X^n\) solve \eqref{eq:sm_em_sde} and
\eqref{eq:sm_em_scheme}.  Assume that \(f\) is locally Lipschitz and that the solution of
\eqref{eq:sm_em_sde} is nonexplosive on \([0,1]\).  Then, for every
\(T\le1\),
\begin{equation}\label{eq:sm_em_consistency}
    \sup_{t\le T}|X_t-\hat X_t^n|
    \xrightarrow{\mathbb P}0,
\end{equation}
and
\begin{equation}\label{eq:sm_em_mesh_L1}
    \int_0^T
    |\hat X_{s-}^n-\hat X_{\varphi_n(s)}^n|\,ds
    \xrightarrow{\mathbb P}0.
\end{equation}
Moreover,
\begin{equation}\label{eq:sm_em_localization}
    \lim_{\rho\uparrow\infty}
    \limsup_{n\to\infty}
    \mathbb P
    \left(
        \sup_{t\le T}
        \{|X_t|\vee|\hat X_t^n|\}>\rho
    \right)
    =0.
\end{equation}
\end{lemma}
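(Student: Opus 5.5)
The proof works pathwise, via localization to a compact set and a discrete Gronwall argument. Fix \(T\le1\) and \(\rho>0\). Since \(X\) is nonexplosive and c\`adl\`ag, \(M\coloneqq\sup_{t\le1}|X_t|<\infty\) almost surely, so \(\mathbb P(M>\rho)\to0\) as \(\rho\uparrow\infty\). Let \(L_{\rho}\) be a Lipschitz constant of \(f\) on \(\overline B(0,\rho+1)\). Define the stopping-type index
\[
\theta_n\coloneqq\inf\{t\le T:|X_t-\hat X_t^n|\ge1\}\wedge T .
\]
This is a genuine hitting time of a continuous process, because \(X-\hat X^n\) is absolutely continuous by \eqref{eq:sm_em_error_identity}.

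On \(\{M\le\rho\}\) and for \(t\le\theta_n\), both \(X\) and \(\hat X^n\) stay in \(\overline B(0,\rho+1)\). Writing \(e_t=|X_t-\hat X^n_t|\), identity \eqref{eq:sm_em_error_identity} gives
\[
e_t\le L_\rho\int_0^t|X_{s-}-X_{\varphi_n(s)}|\,ds+L_\rho\int_0^t e_{\varphi_n(s)}\,ds .
\]
The first term is bounded by \(L_\rho\,\delta_n\), where \(\delta_n\coloneqq\int_0^1|X_{s-}-X_{\varphi_n(s)}|\,ds\to0\) almost surely by \cref{le:cadlag_mesh_L1}. Since \(\sup_{u\le t}e_u\) is nondecreasing, Gronwall's inequality yields
\[
\sup_{t\le\theta_n}e_t\le L_\rho\delta_n e^{L_\rho}.
\]
For \(n\) large this bound is \(<1\), so by continuity of \(e\) we get \(\theta_n=T\) unless \(e\) reaches \(1\) earlier, which is impossible. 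Hence, on \(\{M\le\rho\}\), eventually \(\sup_{t\le T}e_t\le L_\rho e^{L_\rho}\delta_n\to0\) almost surely. Since \(\mathbb P(M>\rho)\) is arbitrarily small, this proves \eqref{eq:sm_em_consistency}, in fact with almost sure convergence. The same event gives \(\sup_{t\le T}|\hat X^n_t|\le M+1\) eventually, so the probability in \eqref{eq:sm_em_localization} is at most \(\mathbb P(M>\rho-1)+o(1)\), which yields \eqref{eq:sm_em_localization}.

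For \eqref{eq:sm_em_mesh_L1}, use the triangle inequality:
\[
|\hat X^n_{s-}-\hat X^n_{\varphi_n(s)}|\le|X_{s-}-X_{\varphi_n(s)}|+e_{s-}+e_{\varphi_n(s)} .
\]
Integrating over \([0,T]\) bounds the left side by \(\delta_n+2T\sup_{t\le T}e_t\), which tends to zero in probability by the previous step and \cref{le:cadlag_mesh_L1}.

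The only delicate point is the circularity of using local Lipschitz bounds for \(\hat X^n\) before knowing that \(\hat X^n\) stays bounded. The hitting time \(\theta_n\) of the continuous error resolves this, together with the fact that the jumps of \(\hat X^n\) coincide with those of \(X\) by \eqref{eq:sm_common_jumps}. One must also check that grid values \(\hat X^n_{\varphi_n(s)}\) with \(s\le\theta_n\) also satisfy \(\varphi_n(s)\le\theta_n\), so they too lie in the ball; this holds trivially.
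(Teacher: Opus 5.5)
Your proof is correct and follows essentially the same route as the paper: localize \(X\) to a ball, stop the continuous error \(X-\hat X^n\) when it first reaches size \(1\), add and subtract \(f(X_{\varphi_n(s)})\), and apply Gronwall together with \cref{le:cadlag_mesh_L1} to rule out that stopping for large \(n\). Both then finish \eqref{eq:sm_em_mesh_L1} and \eqref{eq:sm_em_localization} by the triangle inequality; the only differences are cosmetic (you localize through the event \(\{\sup_{t\le1}|X_t|\le\rho\}\) instead of a stopping time \(\tau_\rho\), and you correctly note that the argument actually yields almost sure convergence in \eqref{eq:sm_em_consistency}).
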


\begin{proof}
Set \(E^n\coloneqq X-\hat X^n\), which is continuous by
\eqref{eq:sm_em_error_identity}.  Fix \(T\le1\) and \(\rho>0\), and let
\(\tau_\rho\coloneqq\inf\{t:|X_t|>\rho\}\) and
\(\theta_n\coloneqq\inf\{t:|E_t^n|>1\}\), with \(\inf\varnothing=\infty\).
Before these exits, \(X\) and the grid values of \(\hat X^n\) lie in the
ball of radius \(\rho+1\).  If \(L_\rho\) is a Lipschitz constant on
that ball, adding and subtracting \(f(X_{\varphi_n(s)})\) in the error
identity and applying Gronwall gives
\begin{equation}\label{eq:sm_consistency_stopped_error_bound}
    \sup_{t\le T\wedge\tau_\rho\wedge\theta_n}|E_t^n|
    \le
    L_\rho e^{L_\rho T}
    \int_0^T|X_{s-}-X_{\varphi_n(s)}|\,ds.
\end{equation}
The right-hand side tends to zero almost surely by
\cref{le:cadlag_mesh_L1}.  On \(\{\tau_\rho>T\}\), once the right-hand
side is at most \(1/2\), suppose that \(\theta_n\le T\).
By the definition of \(\theta_n\) and continuity of \(E^n\),
\(|E^n_{\theta_n}|=1\), whereas
\eqref{eq:sm_consistency_stopped_error_bound} gives
\(|E^n_{\theta_n}|\le1/2\), a contradiction.  Thus \(\theta_n>T\).
Letting \(n\to\infty\) and then \(\rho\to\infty\) proves
\eqref{eq:sm_em_consistency}, since \(X\) is nonexplosive.
Moreover,
\[
    \int_0^T|\hat X^n_{s-}-\hat X^n_{\varphi_n(s)}|\,ds
    \le \int_0^T|X_{s-}-X_{\varphi_n(s)}|\,ds
       +2T\sup_{t\le T}|E_t^n|,
\]
which proves \eqref{eq:sm_em_mesh_L1}.
Finally, \(\sup_{t\le T}|\hat X_t^n|\le
\sup_{t\le T}|X_t|+\sup_{t\le T}|E_t^n|\) yields compact containment.
\end{proof}

\begin{lemma}[Convergence of the averaged Jacobian]
\label{le:sm_mean_value_coefficient_convergence}
Assume the hypotheses of \cref{le:sm_em_consistency_localization} and
\(f\in C^1(\mathbb R^d;\mathbb R^d)\).
Let \(H^n\) be defined by \eqref{eq:sm_new_mean_value_coefficient}
and set \(H_s=Df(X_{s-})\).  Then
\begin{equation}\label{eq:app_mean_value_coefficient_L1}
    H^n\longrightarrow H
    \quad\text{in probability in }
    L^1([0,T],\mathbb R^{d\times d})
\end{equation}
for every \(T\le1\), and
\begin{equation}\label{eq:app_mean_value_coefficient_tight}
    \|H^n\|_{L^\infty([0,T])}
    =O_{\mathbb P}(1).
\end{equation}

\end{lemma}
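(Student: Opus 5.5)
The plan is to localize with the events from \cref{le:sm_em_consistency_localization} and then use continuity of \(Df\) on compact sets. Fix \(T\le1\) and \(\rho>0\), and work on
\[
\Omega_{\rho,n}=\Bigl\{\sup_{t\le T}\bigl(|X_t|\vee|\hat X_t^n|\bigr)\le\rho\Bigr\}.
\]
On this event every argument \(\hat X^n_{\varphi_n(s)}+r\{X_{\varphi_n(s)}-\hat X^n_{\varphi_n(s)}\}\) in \eqref{eq:sm_new_mean_value_coefficient} is a convex combination of points of norm at most \(\rho\), so it lies in \(\overline B(0,\rho)\). Hence
\[
\|H^n\|_{L^\infty([0,T])}\le M_\rho\coloneqq\sup_{|x|\le\rho}\|Df(x)\|_{\mathrm{op}}
\quad\text{on }\Omega_{\rho,n}.
\]
By \eqref{eq:sm_em_localization}, \(\limsup_n\mathbb P(\Omega_{\rho,n}^c)\to0\) as \(\rho\uparrow\infty\). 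Since \(\mathbb P(\|H^n\|_{L^\infty}>M_\rho)\le\mathbb P(\Omega_{\rho,n}^c)\), this gives \eqref{eq:app_mean_value_coefficient_tight}.

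For \eqref{eq:app_mean_value_coefficient_L1}, split
\[
|H_s^n-H_s|\le|H_s^n-Df(X_{\varphi_n(s)})|+|Df(X_{\varphi_n(s)})-Df(X_{s-})|.
\]
The second term is handled by \cref{le:cadlag_mesh_L1} with \(G=Df\) applied to the c\`adl\`ag path \(X\) on \([0,T]\). Its integral tends to zero almost surely, and this step needs no localization.

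For the first term, let \(w_\rho\) denote the modulus of uniform continuity of \(Df\) on \(\overline B(0,\rho)\). On \(\Omega_{\rho,n}\), each point in the averaging lies within \(|X_{\varphi_n(s)}-\hat X^n_{\varphi_n(s)}|\le\sup_{t\le T}|X_t-\hat X^n_t|\) of \(X_{\varphi_n(s)}\), and both points lie in the ball. Thus
\[
\int_0^T|H^n_s-Df(X_{\varphi_n(s)})|\,ds\le T\,w_\rho\Bigl(\sup_{t\le T}|X_t-\hat X_t^n|\Bigr),
\]
and the right-hand side tends to zero in probability by \eqref{eq:sm_em_consistency}. Combining the two terms, for any \(\eta>0\),
\[
\limsup_n\mathbb P\bigl(\|H^n-H\|_{L^1([0,T])}>\eta\bigr)\le\limsup_n\mathbb P(\Omega_{\rho,n}^c),
\]
and letting \(\rho\uparrow\infty\) finishes the proof.

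The only delicate point is that the exact process \(X\) is c\`adl\`ag while \(H\) uses \(X_{s-}\). This is why the comparison is routed through \(Df(X_{\varphi_n(s)})\) and \cref{le:cadlag_mesh_L1}, rather than attempting a uniform comparison. Measurability of \(H^n\) in \(s\) is immediate because it is piecewise constant on the mesh cells.
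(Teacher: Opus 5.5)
Your proposal is correct and follows essentially the same route as the paper. Both arguments localize on \(\Omega_{\rho,n}\), use convexity of the ball to bound \(H^n\) and so obtain tightness, and split \(H^n-Df(X_-)\) through \(Df(X_{\varphi_n(s)})\). The first piece is then controlled by the modulus of continuity of \(Df\) together with consistency, and the second by \cref{le:cadlag_mesh_L1}.
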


\begin{proof}
On \(\Omega_{\rho,n}(T)\coloneqq\{\sup_{t\le T}(|X_t|\vee|\hat X_t^n|)
\le\rho\}\), convexity of the ball gives
\(\|H^n\|_{L^\infty([0,T])}\le\sup_{|x|\le\rho}|Df(x)|\).
Equation \eqref{eq:sm_em_localization} proves
\eqref{eq:app_mean_value_coefficient_tight}.

Define
\[
    \omega_\rho(\eta)\coloneqq
    \sup\{|Df(x)-Df(y)|:|x|,|y|\le\rho,\ |x-y|\le\eta\}.
\]
Uniform continuity on the compact ball gives
\(\omega_\rho(\eta)\to0\) as \(\eta\downarrow0\).
The approximation and mesh errors satisfy, on the same event,
\begin{equation}\label{eq:app_mean_value_split}
\begin{split}
    \|H^n-Df(X_-)\|_{L^1([0,T])}
    &\le T\omega_\rho\!\left(\sup_{t\le T}|X_t-\hat X_t^n|\right)\\
    &\quad+\int_0^T|Df(X_{\varphi_n(s)})-Df(X_{s-})|\,ds.
\end{split}
\end{equation}
By \eqref{eq:sm_em_consistency},
\[
    T\omega_\rho\!\left(\sup_{t\le T}|X_t-\hat X_t^n|\right)
    \xrightarrow{\mathbb P}0.
\]
Apply \cref{le:cadlag_mesh_L1} to \(Df(X)\) for the second term.
Combining these estimates with \eqref{eq:sm_em_localization} and then
letting \(\rho\uparrow\infty\) proves
\eqref{eq:app_mean_value_coefficient_L1}.
\end{proof}

\subsection{Stability of the discretized linear equation}

\begin{lemma}[Stability of the discretized linear equation]
\label{le:grid_linear_propagation}
Let \(\Gamma^n\) be \(\mathbb R^r\)-valued c\`adl\`ag processes, let
\(J^n\) be \(\mathbb R^d\)-valued c\`adl\`ag processes, and let \(H^n\) be
\(\mathbb R^{d\times d}\)-valued measurable processes which belong to
\(L^\infty([0,1])\) almost surely.  Suppose that the
c\`adl\`ag process \(U^n\) satisfies
\begin{equation}\label{eq:app_grid_linear_equation}
    U_t^n
    =
    J_t^n
    +
    \int_0^{\varphi_n(t)}H_s^nU_s^n\,ds,
    \qquad 0\le t\le1.
\end{equation}
Assume that
\begin{equation}\label{eq:app_grid_linear_input_convergence}
    (\Gamma^n,J^n,H^n)
    \Rightarrow
    (\Gamma,J,H)
\end{equation}
in \(D([0,1],\mathbb R^{r+d})
    \times
    L^1([0,1],\mathbb R^{d\times d}),\)
where the first factor carries the usual vector-valued \(J_1\)-topology, and
assume that
\begin{equation}\label{eq:app_grid_linear_coefficient_tightness}
    \|H^n\|_{L^\infty([0,1])}
    =O_{\mathbb P}(1).
\end{equation}
Then
\begin{equation}\label{eq:app_grid_linear_output_convergence}
    (\Gamma^n,U^n)
    \Rightarrow
    (\Gamma,U)
\end{equation}
in \(D([0,1],\mathbb R^{r+d})\), where \(U\) is the unique c\`adl\`ag
solution of
\begin{equation}\label{eq:app_grid_linear_limit_equation}
    U_t
    =
    J_t
    +
    \int_0^tH_sU_s\,ds.
\end{equation}
If \eqref{eq:app_grid_linear_input_convergence} holds stably with respect to
\(\mathcal F\), then
\eqref{eq:app_grid_linear_output_convergence} also holds stably with respect
to \(\mathcal F\).
\end{lemma}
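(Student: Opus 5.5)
The proof writes the solution map as a deterministic functional of \((J,H)\), proves that this functional is continuous in the appropriate sense, and then applies the continuous mapping theorem. A Skorokhod representation (or a stable version via test functions) handles the tightness requirement.

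\emph{Deterministic solution map.} For deterministic \(j\in D([0,1],\mathbb R^d)\) and \(k\in L^\infty([0,1],\mathbb R^{d\times d})\), equation \eqref{eq:app_grid_linear_equation} is solved recursively over mesh cells. On \([\varphi_n(t),\varphi_n(t)+1/n)\) the integral term is constant, so \(u^n_t=j_t+a^n_{\varphi_n(t)}\), where \(a^n\) is a piecewise-constant correction. Similarly, \eqref{eq:app_grid_linear_limit_equation} has a unique c\`adl\`ag solution \(u\) by Picard iteration and Gronwall's inequality.

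Set \(a^n_t=u^n_t-j_t=\int_0^{\varphi_n(t)}k^n_su^n_s\,ds\) and \(a_t=u_t-j_t=\int_0^t k_su_s\,ds\). Since \(a\) is continuous, the \(J_1\) convergence of \((\gamma^n,u^n)=(\gamma^n,j^n+a^n)\) to \((\gamma,j+a)\) follows from \((\gamma^n,j^n)\to(\gamma,j)\) in \(J_1\) together with \(\|a^n-a\|_\infty\to0\). Addition of a uniformly convergent term with a continuous limit is continuous for the vector-valued \(J_1\)-topology.

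\emph{Key estimate.} Assume \((\gamma^n,j^n)\to(\gamma,j)\) in \(J_1\), \(k^n\to k\) in \(L^1\), and \(\sup_n\|k^n\|_\infty\le M\). Then \(\|j^n\|_\infty\) is bounded, and Gronwall gives \(\|u^n\|_\infty\le C\). Write
\[
 a^n_t-a_t=\int_0^{\varphi_n(t)}k^n_s(a^n_s-a_s)\,ds+\int_0^{\varphi_n(t)}\bigl(k^n_su^n_s-k^n_su_s\bigr)\big|_{\text{$j$-part}}\,ds+\rho^n_t.
\]
More concretely, write \(u^n-u=(j^n-j)+(a^n-a)\). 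Then
\[
 \|a^n-a\|_\infty\le M\int_0^1|a^n_s-a_s|ds+M\int_0^1|j^n_s-j_s|ds+\int_0^1|k^n_s-k_s||u_s|ds+\|u\|_\infty\|k\|_{L^1}\text{-tail over }[\varphi_n(t),t].
\]
Each term is handled separately:
\begin{itemize}
\item \(J_1\) convergence of \(j^n\) to \(j\) implies \(j^n_s\to j_s\) at every continuity point of \(j\), with uniform boundedness. Dominated convergence then gives \(\int|j^n-j|\,ds\to0\).
\item The term \(\int|k^n-k||u|\,ds\le\|u\|_\infty\|k^n-k\|_{L^1}\to0\).
\item The tail term tends to zero by absolute continuity of the integral, since \(\|k\|_\infty\le M\) as an \(L^1\) limit.
\end{itemize}
Gronwall's inequality then yields \(\|a^n-a\|_\infty\to0\).

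\emph{Probabilistic transfer.} For given \(\varepsilon>0\), choose \(M\) with \(\mathbb P(\|H^n\|_\infty>M)<\varepsilon\) for all \(n\). Replace \(H^n\) by a truncation \(H^{n,M}\), obtained by projecting entries onto \([-M,M]\). The truncated version agrees with \(H^n\) off an event of probability less than \(\varepsilon\), and \(H^{n,M}\to H^M\) in \(L^1\) by continuity of the projection. The limit satisfies \(\|H\|_\infty\le M\) a.s. whenever \(\limsup\mathbb P(\|H^n\|_\infty>M)\) is small, since \(\{k:\|k\|_\infty\le M\}\) is \(L^1\)-closed, so \(H^M=H\) with high probability.

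The map \((\gamma,j,k)\mapsto(\gamma,u^{n})\) depends on \(n\), so the continuous mapping theorem is used in its extended form (Billingsley's Theorem 5.5): if \(x_n\to x\) then \(\Psi_n(x_n)\to\Psi(x)\). The key estimate supplies exactly this on the set \(\{\|k\|_\infty\le M\}\). For stability, apply the same extended mapping theorem to \((\xi,\Gamma^n,J^n,H^n)\), since stable convergence is equivalent to joint convergence with every bounded \(\mathcal F\)-measurable \(\xi\). Letting \(\varepsilon\to0\) finishes the proof.

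\emph{Main obstacle.} The main difficulty is the uniform convergence of \(a^n\) when the convergence of \(j^n\) is only \(J_1\), not uniform. This is resolved by using only \(L^1\) convergence of \(j^n\), which \(J_1\) convergence provides through almost-everywhere pointwise convergence and boundedness. The grid cutoff \(\varphi_n(t)\) versus \(t\) also has to be controlled; it is absorbed by the uniform \(L^\infty\) bound on \(k^n\) and \(u^n\).
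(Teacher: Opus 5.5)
Your argument is correct, but it is organized differently from the paper's. The paper never compares the grid equation directly with the limit equation.

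\begin{itemize}
\item It first proves that the single, \(n\)-independent solution map \(\Psi((\gamma,j),h)=(\gamma,v)\), with \(v_t=j_t+\int_0^t h_sv_s\,ds\), is continuous from \(D([0,1],\mathbb R^{r+d})\times L^1\) to \(D([0,1],\mathbb R^{r+d})\). This step needs no \(L^\infty\) control: Gronwall is used with the factor \(e^{\|h\|_{L^1}}\) and \(\sup_n\|v_n\|_\infty<\infty\).
\item It then introduces the auxiliary continuous-time solution \(V^n\) driven by \((J^n,H^n)\). The ordinary continuous mapping theorem gives \((\Gamma^n,V^n)\Rightarrow(\Gamma,U)\).
\item The hypothesis \(\|H^n\|_{L^\infty}=O_{\mathbb P}(1)\) enters only through the pathwise bound \(\|U^n-V^n\|_\infty\le n^{-1}K_n\|J^n\|_\infty e^{2K_n}\), and Slutsky's theorem finishes the proof.
\end{itemize}

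You instead merge the coefficient convergence and the grid cutoff into one Gronwall estimate for the \(n\)-dependent maps \(\Psi_n\). That is why you need Billingsley's extended mapping theorem. Because your estimate is stated under \(\sup_n\|k^n\|_\infty\le M\), you also need the clipping \(\pi_M(H^n)\) and a portmanteau argument for \(H\). The paper's split buys a reusable continuity statement for one fixed map, the standard continuous mapping theorem, and an explicit \(O(1/n)\) size for the grid effect. Your route avoids the auxiliary process and handles everything in a single deterministic estimate.

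Two small corrections.
\begin{itemize}
\item Gronwall must be applied pointwise, \(|a^n_t-a_t|\le M\int_0^t|a^n_s-a_s|\,ds+\epsilon_n\). With \(\int_0^1\) under a supremum, the bound does not close.
\item The portmanteau theorem, applied to the open set \(\{\|k\|_\infty>M\}\subset L^1\), gives \(\mathbb P(\|H\|_\infty>M)\le\liminf_n\mathbb P(\|H^n\|_\infty>M)\le\varepsilon\). It does not give \(\|H\|_\infty\le M\) almost surely.
\end{itemize}

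The truncation is in fact avoidable in your route. Run Gronwall with the integrable kernel \(|k^n_s|\) in place of \(M\). This gives \(\sup_t|a^n_t-a_t|\le\epsilon_n e^{\|k^n\|_{L^1}}\) with
\[
\epsilon_n=\int_0^1|k^n_s||j^n_s-j_s|\,ds+\|u\|_\infty\|k^n-k\|_{L^1}+\|u\|_\infty\sup_t\int_{\varphi_n(t)}^t|k_s|\,ds.
\]
Each term tends to zero:
\begin{itemize}
\item the first is at most \(2\sup_m\|j^m\|_\infty\|k^n-k\|_{L^1}+\int_0^1|k_s||j^n_s-j_s|\,ds\), which tends to zero by dominated convergence;
\item the second tends to zero since \(k^n\to k\) in \(L^1\);
\item the last tends to zero by absolute continuity of \(\int|k|\).
\end{itemize}
Hence \(\Psi_n(x_n)\to\Psi(x)\) along every convergent sequence, and the extended mapping theorem applies directly, without clipping.
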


\begin{proof}
For deterministic \(j\in D([0,1],\mathbb R^d)\) and
\(h\in L^1([0,1],\mathbb R^{d\times d})\), consider
\(v_t=j_t+\int_0^t h_sv_s\,ds\).  Picard iteration
\(v^{(0)}\coloneqq j\),
\(v^{(k+1)}_t\coloneqq j_t+\int_0^t h_sv^{(k)}_s\,ds\) satisfies
\[
    \|v^{(k+1)}-v^{(k)}\|_\infty
    \le \|j\|_\infty\frac{\|h\|_{L^1}^{k+1}}{(k+1)!}.
\]
Thus the iterates converge uniformly to a solution.  Its integral part
is absolutely continuous, so the solution is c\`adl\`ag and depends only
on the \(L^1\) class of \(h\).  Gronwall proves uniqueness and gives
\[
    \|v\|_\infty\le\|j\|_\infty e^{\|h\|_{L^1}}.
\]

We show that the solution map \((j,h)\mapsto v\), defined by
\(v_t=j_t+\int_0^t h_sv_s\,ds\), is continuous from
\(J_1\times L^1\) to \(J_1\).
Suppose \(j_n\to j\) in \(J_1\) and \(h_n\to h\) in \(L^1\).
The preceding bound and \(L^1\)-convergence give
\(C\coloneqq\sup_n\|v_n\|_\infty<\infty\).
Set \(i_n\coloneqq v_n-j_n\) and \(i\coloneqq v-j\).  Subtracting the integral equations
and applying Gronwall gives
\begin{equation}\label{eq:app_grid_linear_integral_difference}
    \|i_n-i\|_\infty
    \le\left(C\|h_n-h\|_{L^1}
       +\int_0^1|h_s|\,|j_n(s)-j(s)|\,ds\right)e^{\|h\|_{L^1}}
    \longrightarrow0.
\end{equation}
The integral tends to zero by dominated convergence: \(j_n\) is uniformly
bounded and converges at each continuity point of \(j\), whose
discontinuity set is countable.
Since \(i_n\to i\) uniformly and \(i\) is continuous,
addition gives \(v_n=j_n+i_n\to j+i=v\) in \(J_1\).

Define the joint solution map
\[
    \Psi((\gamma,j),h)\coloneqq(\gamma,v).
\]
The same addition argument applied to \((\gamma_n,j_n)+(0,i_n)\)
proves continuity of
\[
    \Psi:D([0,1],\mathbb R^{r+d})\times L^1
    \longrightarrow D([0,1],\mathbb R^{r+d}),
\]
with both Skorokhod spaces equipped with the vector-valued
\(J_1\)-topology.  In particular, the solution maps are Borel measurable.

Define \(V^n\) as the solution of the continuous-time linear equation
\begin{equation}\label{eq:app_grid_linear_auxiliary_equation}
    V_t^n
    \coloneqq
    J_t^n
    +
    \int_0^tH_s^nV_s^n\,ds.
\end{equation}
The Borel solution map makes \(V^n\) a measurable random element, and
the pathwise bound gives
\begin{equation}\label{eq:app_grid_linear_V_bound}
    \|V^n\|_\infty
    \le \|J^n\|_\infty\exp(\|H^n\|_{L^1}).
\end{equation}
Put \(K_n\coloneqq\|H^n\|_{L^\infty}\).  Subtracting the two equations gives
\[
    U_t^n-V_t^n
    =\int_0^{\varphi_n(t)}H_s^n(U_s^n-V_s^n)\,ds
      -\int_{\varphi_n(t)}^tH_s^nV_s^n\,ds.
\]
A further Gronwall estimate yields
\begin{equation}\label{eq:app_grid_linear_grid_error_bound}
    \|U^n-V^n\|_\infty
    \le\frac{K_n}{n}\|J^n\|_\infty e^{2K_n}
    \xrightarrow{\mathbb P}0.
\end{equation}
Indeed, \(K_n\) is tight by assumption, and \(\|J^n\|_\infty\) is tight
by \eqref{eq:app_grid_linear_input_convergence} and \(J_1\)-continuity of the supremum norm.
On \(\{K_n\le M,\ \|J^n\|_\infty\le M\}\), the bound is at most
\(M^2e^{2M}/n\); first let \(n\to\infty\), then \(M\to\infty\).

By continuity of \(\Psi\) and the continuous mapping theorem,
\eqref{eq:app_grid_linear_input_convergence} gives
\((\Gamma^n,V^n)\Rightarrow(\Gamma,U)\).
Together with \(\|U^n-V^n\|_\infty\to0\) in probability from
\eqref{eq:app_grid_linear_grid_error_bound}, Slutsky's theorem gives
\eqref{eq:app_grid_linear_output_convergence}.

If \eqref{eq:app_grid_linear_input_convergence} holds stably with respect
to \(\mathcal F\), then, for every bounded \(\mathcal F\)-measurable
random variable \(\xi\),
\[
    (\xi,\Gamma^n,J^n,H^n)\Rightarrow(\xi,\Gamma,J,H).
\]
The same continuous mapping and Slutsky argument in the product space gives
\[
    (\xi,\Gamma^n,U^n)\Rightarrow(\xi,\Gamma,U).
\]
Since this holds for every such \(\xi\), the convergence in
\eqref{eq:app_grid_linear_output_convergence} is stable with respect to \(\mathcal F\).
\end{proof}

\section{Proof of the Integrated Discretization Limit}
\label{app:first_integrated_companion}

We prove \cref{thm:integrated_discretization_limit}.  Throughout this
appendix, we use the notation of
Subsection~\ref{subsec:general_integrated_discretization_limit}.

\subsection{Grigelionis representation and regular approximation}

Applying \cite[Theorem~2.1.2]{jacod2012discretization} with Brownian
dimension \(r\) and diffusion coefficient \(a=c^{1/2}\), we obtain a
very good filtered extension
\((\Omega^G,\mathcal G,(\mathcal G_t),\mathbb P^G)\) on which
\begin{align}
    V_t
    &=
    V_0
    +
    \int_0^t b_s\,ds
    +
    \int_0^t a_s\,dW_s^0
    +
    h(\delta)*(p-\mathfrak q)_t
    \notag\\
    &\quad
    +
    \{\delta-h(\delta)\}*p_t.
    \label{eq:app_generic_grigelionis}
\end{align}
Here \(W^0\) is an \(r\)-dimensional Brownian motion,
and \(p\) is a Poisson random measure on \(\mathbb R_+\times E\) with
compensator \(\mathfrak q(ds,de)=ds\,\lambda(de)\), where \(E\) is
Polish and \(\lambda\) is atomless, sigma-finite and infinite.
The predictable map \(\delta\) pushes \(\lambda\), restricted to
\(\{\delta\ne0\}\), onto \(F_s\).
The reference uses the canonical truncation
\(h_0(z)=z\mathbf 1_{\{|z|\le1\}}\); its drift density \(b^0\) and
the drift density \(b\) relative to \(h\) are related by
\begin{equation}\label{eq:detailed_grigelionis_truncation_change}
    b_s
    =
    b_s^0
    +
    \int_E
    \{h(\delta(s,e))-h_0(\delta(s,e))\}\lambda(de).
\end{equation}
The correction is absolutely integrable because \(h-h_0\) is bounded
and vanishes near zero.  Original martingales and characteristics are
preserved on the extension \cite[(2.1.29)]{jacod2012discretization}.

On a further very good extension, independently mark the atoms of
\(p\) by uniform \([0,1]\) variables independent of \(\mathcal G\),
revealing each mark only at the time of its atom.
Let \(\bar p(ds,de,du)\) denote the resulting Poisson random measure on
\(\mathbb R_+\times E\times[0,1]\), with compensator
\(\bar{\mathfrak q}(ds,de,du)=ds\,\lambda(de)\,du\).
Let \(W'\) be an \(r\)-dimensional Brownian motion independent of
\(\mathcal G\) and of the marking.
For notational simplicity, we denote the probability measure on this
further extension by \(\mathbb P\).
The image of \(\bar p\) under \((s,e,u)\mapsto(s,\delta(s,e),u)\), omitting zero
jumps, realizes \(\widetilde\mu^{V}\).  Consequently,
\begin{align}
    \mathcal I_t(V)
    &=
    \frac12\int_0^t b_s\,ds
    +
    \frac12\int_0^t a_s\,dW_s^0
    +
    \frac1{\sqrt{12}}\int_0^t a_s\,dW_s'
    \notag\\
    &\quad
    +
    \{u h(\delta)\}*(\bar p-\bar{\mathfrak q})_t
    +
    \{u(\delta-h(\delta))\}*\bar p_t.
    \label{eq:app_generic_first_companion_grigelionis}
\end{align}
All intermediate stable convergences below are relative to \(\mathcal G\).

Let \(\pi_m(x)=x\) for \(|x|\le m\), and
\(\pi_m(x)=mx/|x|\) otherwise, for vectors or matrices.
Choose measurable \(E_m\uparrow E\) with \(\lambda(E_m)<\infty\),
extend \(a\) by zero to negative times, and set
\begin{equation}\label{eq:app_first_companion_regular_coefficients}
\begin{split}
    b_s^{(m)}&=\pi_m(b_s),\qquad
    a_s^{(m)}=m\int_{(s-1/m)^+}^s\pi_m(a_v)\,dv,\\
    \delta^{(m)}(s,e)&=\delta(s,e)\mathbf 1_{E_m}(e).
\end{split}
\end{equation}
The matrices \(a^{(m)}\) remain symmetric nonnegative, hence are the
symmetric square roots of \(c^{(m)}=a^{(m)}(a^{(m)})^\top\).
Define \(V^{(m)}\) and \(\mathcal I^{(m)}\) on the same extension by
\begin{align}
    V_t^{(m)}
    &\coloneqq
    V_0
    +
    \int_0^t b_s^{(m)}\,ds
    +
    \int_0^t a_s^{(m)}\,dW_s^0
    +
    h(\delta^{(m)})*(p-\mathfrak q)_t
    \notag\\
    &\quad
    +
    \{\delta^{(m)}-h(\delta^{(m)})\}*p_t,
    \label{eq:app_first_companion_regular_process}
\end{align}
\begin{align}
    \mathcal I_t^{(m)}
    &\coloneqq
    \frac12\int_0^t b_s^{(m)}\,ds
    +
    \frac12\int_0^t a_s^{(m)}\,dW_s^0
    +
    \frac1{\sqrt{12}}\int_0^t a_s^{(m)}\,dW_s'
    \notag\\
    &\quad
    +
    \{u h(\delta^{(m)})\}*(\bar p-\bar{\mathfrak q})_t
    +
    \{u(\delta^{(m)}-h(\delta^{(m)}))\}*\bar p_t.
    \label{eq:app_first_companion_regular_limit}
\end{align}

For fixed \(m\), the canonical drift
\[
    b_s^{(m),0}=b_s^{(m)}
       +\int_E\{h_0(\delta^{(m)}(s,e))-h(\delta^{(m)}(s,e))\}\lambda(de)
\]
is bounded.  The backward average \(a^{(m)}\) is adapted, bounded and
continuous, and
\[
    |\delta^{(m)}(s,e)|\wedge1\le\mathbf 1_{E_m}(e),
    \qquad \int_E\mathbf 1_{E_m}(e)^2\lambda(de)=\lambda(E_m)<\infty.
\]
Thus \(V^{(m)}\) satisfies
\cite[Assumption~6.1.1]{jacod2012discretization} with exponent \(2\),
namely (H).  This exponent is unrelated to the dimension \(r\);
no bound on the jump sizes is required.

\begin{lemma}[Regular approximation]
\label{le:first_companion_regular_approximation}
As \(m\to\infty\),
\begin{align}
    &\int_0^1|b_s^{(m)}-b_s|\,ds
    +
    \int_0^1|a_s^{(m)}-a_s|^2\,ds
    \notag\\
    &\qquad
    +
    \int_0^1\int_E
    \bigl(1\wedge|\delta^{(m)}(s,e)-\delta(s,e)|^2\bigr)
    \lambda(de)\,ds
    \longrightarrow0
    \label{eq:app_first_companion_coefficient_approximation}
\end{align}
almost surely.  Moreover, one can write
\(V-V^{(m)}=A^{(m)}+M^{(m)}\),
where \(A^{(m)}\) has finite variation, \(M^{(m)}\) is a locally square
integrable local martingale with continuous predictable bracket, and
\begin{equation}\label{eq:app_first_companion_semimartingale_control}
    \operatorname{Var}(A^{(m)})_1
    +
    \operatorname{tr}\langle M^{(m)}\rangle_1
    \xrightarrow{\mathbb P}0.
\end{equation}
Finally,
\begin{equation}\label{eq:app_first_companion_limit_approximation}
    \sup_{t\le1}
    \left(
        |V_t^{(m)}-V_t|
        +
        |\mathcal I_t^{(m)}-\mathcal I_t(V)|
    \right)
    \xrightarrow{\mathbb P}0.
\end{equation}
\end{lemma}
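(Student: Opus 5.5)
The proof splits into three parts: the coefficient convergence \eqref{eq:app_first_companion_coefficient_approximation}, the semimartingale decomposition \eqref{eq:app_first_companion_semimartingale_control}, and the uniform convergence \eqref{eq:app_first_companion_limit_approximation}. The last two follow from the first by standard martingale inequalities.

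\emph{Step 1: coefficients.} For the drift, $|\pi_m(b_s)-b_s|\le|b_s|\mathbf 1_{\{|b_s|>m\}}$, so dominated convergence and $\int_0^1|b_s|ds<\infty$ give the convergence pathwise. For the diffusion coefficient, first note $\int_0^1|\pi_m(a_s)-a_s|^2ds\to0$ by dominated convergence, since $|a_s|^2=\operatorname{tr}(c_s)$ is integrable. Then, for a fixed path, the backward averages $m\int_{(s-1/m)^+}^s g_v\,dv$ converge to $g$ in $L^2([0,1])$ for every $g\in L^2$; this is the standard Lebesgue-differentiation/$L^2$ continuity of translation argument. The averaging operator is a contraction on $L^2$, by Jensen. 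Writing $a^{(m)}-a=A_m(\pi_m a-a)+(A_m a-a)$, where $A_m$ is the averaging operator, gives the $L^2$ claim. For the jumps, $1\wedge|\delta^{(m)}-\delta|^2=(1\wedge|\delta|^2)\mathbf 1_{E_m^c}$. Since $\int_E(1\wedge|\delta(s,e)|^2)\lambda(de)=\int(1\wedge|z|^2)F_s(dz)$ is $ds$-integrable and $E_m\uparrow E$, dominated convergence gives the claim. All three limits hold almost surely.

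\emph{Step 2: decomposition.} Subtracting \eqref{eq:app_first_companion_regular_process} from \eqref{eq:app_generic_grigelionis}, I set $\delta-\delta^{(m)}=\delta\mathbf 1_{E_m^c}$ and split this difference as $\delta\mathbf 1_{E_m^c}\mathbf 1_{\{|\delta|\le1\}}$ plus the large-jump part. The pieces are assigned as follows:
\[
M^{(m)}=\int(a-a^{(m)})dW^0+\{h_0(\delta)\mathbf 1_{E_m^c}\}*(p-\mathfrak q),
\]
while $A^{(m)}$ collects $\int(b-b^{(m)})ds$, the correction $\{(h-h_0)(\delta)\mathbf 1_{E_m^c}\}*\mathfrak q$, and $\{(\delta-h_0(\delta))\mathbf 1_{E_m^c}\}*p$. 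For these terms:
\begin{itemize}
\item $\operatorname{tr}\langle M^{(m)}\rangle_1$ is bounded by the quantity in Step 1 (with $h_0$ in place of $h$, up to constants), so it tends to zero.
\item The $\mathfrak q$-correction is dominated by $C\int_0^1F_s\bigl(\{|z|>\eta\}\bigr)\mathbf 1\,ds$ restricted to $E_m^c$, hence $\to0$ by dominated convergence.
\item The large-jump $p$-sum is nonzero only if some jump of size $>1$ with mark in $E_m^c$ occurs. The probability of this is at most that of an atom of $p$ in $\{|\delta|>1\}\cap(E_m^c)$ before time $1$. That event decreases to a null set, since the large jumps are finite in number and each lies in some $E_m$.
\end{itemize}
If needed, I would localize by stopping times making $\int_0^1(\ldots)ds$ bounded, then remove the localization.

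\emph{Step 3: uniform convergence.} Step 2 with Lenglart's inequality and Doob's inequality gives $\sup_t|V^{(m)}_t-V_t|\to0$ in probability. For $\mathcal I^{(m)}-\mathcal I(V)$, I compare \eqref{eq:app_first_companion_regular_limit} with \eqref{eq:app_generic_first_companion_grigelionis} term by term. The $W'$-integral has bracket $\int_0^1|a^{(m)}-a|^2ds$. The marked compensated integral has bracket bounded by $\int_0^1u^2\,du$ times the $h$-analogue of Step 1, and the factor $u\in[0,1]$ only helps. The uncompensated large-jump term is handled by the same event argument as in Step 2. Lenglart's inequality again gives the claim. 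The main obstacle I expect is the bookkeeping between the truncations $h$ and $h_0$, together with the need to localize because the integrability is only almost sure; everything else is routine dominated convergence.
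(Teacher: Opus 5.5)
Your proposal is correct and follows essentially the paper's argument. It uses dominated convergence and the contraction/approximate-identity property of backward averaging for the coefficients, and a split of \(V-V^{(m)}\) whose large-jump part vanishes eventually, because the finitely many large jumps have marks eventually in \(E_m\). Lenglart's inequality then handles the martingale parts and \(\mathcal I^{(m)}-\mathcal I(V)\).

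The one difference is cosmetic. The paper splits directly with \(h\), taking \(M^{(m)}=\int_0^\cdot(a_s-a_s^{(m)})\,dW_s^0+\{h(\delta)-h(\delta^{(m)})\}*(p-\mathfrak q)\) and using \(|h(\delta)|^2\le C_h(1\wedge|\delta|^2)\). So the passage to \(h_0\), the extra \(\mathfrak q\)-correction term and any localization are unnecessary.
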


\begin{proof}
The drift approximation follows by dominated convergence.
For the diffusion coefficient, the contraction property of backward
averaging gives, pathwise,
\[
    \|a^{(m)}-a\|_{L^2}
    \le\|\pi_m(a)-a\|_{L^2}
      +\left\|m\int_{(\,\cdot-1/m\,)^+}^{\,\cdot}a_v\,dv-a\right\|_{L^2}
    \longrightarrow0.
\]
The first term is the truncation error and the second tends to zero by
the approximate-identity property in \(L^2\).
For the jumps,
\(1\wedge|\delta^{(m)}-\delta|^2
=\mathbf 1_{E\setminus E_m}(1\wedge|\delta|^2)\), whose integral tends
to zero by dominated convergence and the image identity
\begin{equation}\label{eq:detailed_delta_integrability_identity}
    \int_0^1\int_E
    (1\wedge|\delta(s,e)|^2)\lambda(de)\,ds
    =
    \int_0^1\int_{\mathbb R^r}
    (1\wedge|z|^2)F_s(dz)\,ds
    <\infty.
\end{equation}

This proves \eqref{eq:app_first_companion_coefficient_approximation}.  Set
\begin{align}
    A_t^{(m)}
    &\coloneqq
    \int_0^t(b_s-b_s^{(m)})\,ds
    \notag\\
    &\quad
    +
    \{\delta-h(\delta)-\delta^{(m)}+h(\delta^{(m)})\}*p_t,
    \label{eq:app_first_companion_Am}\\
    M_t^{(m)}
    &\coloneqq
    \int_0^t(a_s-a_s^{(m)})\,dW_s^0
    +
    \{h(\delta)-h(\delta^{(m)})\}*(p-\mathfrak q)_t.
    \label{eq:app_first_companion_Mm}
\end{align}
Subtracting the Grigelionis decompositions gives
\(V-V^{(m)}=A^{(m)}+M^{(m)}\).
Choose \(\varepsilon_h>0\) such that \(z-h(z)=0\) for
\(|z|\le\varepsilon_h\), and set
\(W\coloneqq\mathbf 1_{\{|\delta|>\varepsilon_h\}}\).
By \eqref{eq:detailed_delta_integrability_identity},
\[
    (W*\mathfrak q)_1
    \le\frac{1}{1\wedge\varepsilon_h^2}
       \bigl((1\wedge|\delta|^2)*\mathfrak q\bigr)_1<\infty.
\]
Hence the counting process \(W*p\), whose compensator is
\(W*\mathfrak q\), has only finitely many jumps on \([0,1]\)
almost surely.
Since the time measure \(ds\) in \(\mathfrak q(ds,de)=ds\,\lambda(de)\)
has no atoms, almost surely no two atoms of \(p\) have the same time.
At each of the finitely many times under consideration, write the
corresponding atom uniquely as \((s,e_s)\); its \(E\)-coordinate is
\(e_s\), and \(\Delta V_s=\delta(s,e_s)\).  Therefore
\begin{align}
    \operatorname{Var}(A^{(m)})_1
    &\le
    \int_0^1|b_s-b_s^{(m)}|\,ds
    \notag\\
    &\quad
    +
    \sum_{s\le1}
    |\Delta V_s-h(\Delta V_s)|
    \mathbf 1_{\{e_s\notin E_m\}}.
    \label{eq:app_first_companion_A_variation}
\end{align}
Here and below, sums involving \(e_s\) run over these finitely many times.
Since \(E_m\uparrow E\), all these coordinates \(e_s\) eventually belong
to \(E_m\).  Thus the variation tends to zero almost surely, without a
moment assumption on the large jumps.  Also,
\begin{align}
    \operatorname{tr}\langle M^{(m)}\rangle_1
    &=
    \int_0^1|a_s-a_s^{(m)}|^2\,ds
    \notag\\
    &\quad
    +
    \int_0^1\int_E
    |h(\delta(s,e))-h(\delta^{(m)}(s,e))|^2
    \lambda(de)\,ds
    \longrightarrow0
    \label{eq:app_first_companion_M_bracket}
\end{align}
since \(|h(\delta)-h(\delta^{(m)})|^2
=\mathbf 1_{E\setminus E_m}|h(\delta)|^2
\le C_h\mathbf 1_{E\setminus E_m}(1\wedge|\delta|^2)\).
The bracket is continuous, which proves
\eqref{eq:app_first_companion_semimartingale_control}.

The local martingales \(M^{(m)}\) start at zero and are locally square
integrable; each coordinate bracket is bounded by the trace in
\eqref{eq:app_first_companion_M_bracket}.  The predictable brackets
converge to zero, so Lenglart's inequality gives \(M^{(m)}\to0\) ucp.
Together with the variation estimate, this yields
\(\|V-V^{(m)}\|_\infty\to0\) in probability.

The processes \(\mathcal I^{(m)}\) use the same \(W^0,W'\) and
\(\bar p\) as \(\mathcal I(V)\).
Their drift and uncompensated jump differences are controlled by the
variation estimate above, since \(0\le u\le1\).
The traces of the predictable brackets at time \(1\) of the two
Brownian-integral differences are respectively \(1/4\) and \(1/12\)
times \(\int_0^1|a_s-a_s^{(m)}|^2\,ds\).
For the compensated marked difference, the corresponding trace is
\(1/3\) times the jump term in \eqref{eq:app_first_companion_M_bracket}.
These martingales start at zero and are locally square integrable;
their coordinate brackets are bounded by these traces, which tend to zero.
Thus Lenglart's inequality proves
\(\|\mathcal I^{(m)}-\mathcal I(V)\|_\infty\to0\) in probability,
and hence \eqref{eq:app_first_companion_limit_approximation}.
\end{proof}

\begin{lemma}[Uniform stability of \(\mathcal I^n\)]
\label{le:first_companion_uniform_stability}
For every \(\eta>0\),
\begin{equation}\label{eq:app_first_companion_uniform_stability}
    \lim_{m\to\infty}
    \sup_{n\ge1}
    \mathbb P\left(
        \sup_{t\le1}
        |\mathcal I_t^n(V)-\mathcal I_t^n(V^{(m)})|>\eta
    \right)
    =0.
\end{equation}
\end{lemma}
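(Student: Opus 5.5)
By linearity, \(\mathcal I^n(V)-\mathcal I^n(V^{(m)})=\mathcal I^n(D^{(m)})\) with \(D^{(m)}\coloneqq V-V^{(m)}=A^{(m)}+M^{(m)}\) from \cref{le:first_companion_regular_approximation}. The plan is to bound \(\mathcal I^n\) applied separately to a finite-variation process and to a locally square-integrable martingale, with bounds uniform in \(n\). These bounds will be controlled by \(\operatorname{Var}(A^{(m)})_1\) and \(\operatorname{tr}\langle M^{(m)}\rangle_1\), which tend to zero in probability by \eqref{eq:app_first_companion_semimartingale_control}. The starting point is the representation, obtained by Fubini on each mesh cell \([\varphi_n(s),\varphi_n(s)+1/n)\) together with \(D_{s}-D_{\varphi_n(s)}=\int_{\varphi_n(s)}^{s}dD_v\):
\[
    \mathcal I_t^n(D)=\int_0^{\varphi_n(t)}\psi_n(v)\,dD_v,
    \qquad
    \psi_n(v)\coloneqq n\Bigl(\varphi_n(v)+\tfrac1n-v\Bigr)\in(0,1].
\]
The integrand \(\psi_n\) is deterministic, left-continuous (hence predictable) and bounded by \(1\) uniformly in \(n\). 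This is the key structural fact.

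\emph{Finite-variation part.} Pathwise,
\[
    \sup_{t\le1}|\mathcal I_t^n(A^{(m)})|\le\operatorname{Var}(A^{(m)})_1
\]
for every \(n\). Since the right-hand side is independent of \(n\) and tends to zero in probability, it gives the required uniformity.

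\emph{Martingale part.} The process \(N^{n,m}_t\coloneqq\int_0^t\psi_n(v)\,dM^{(m)}_v\) is a locally square-integrable local martingale with \(\operatorname{tr}\langle N^{n,m}\rangle_1\le\operatorname{tr}\langle M^{(m)}\rangle_1\), because \(|\psi_n|\le1\). We have \(\mathcal I^n_t(M^{(m)})=N^{n,m}_{\varphi_n(t)}\), so \(\sup_t|\mathcal I^n_t(M^{(m)})|\le\sup_t|N^{n,m}_t|\). Lenglart's inequality, applied coordinatewise with the continuous predictable bracket (in fact the domination only needs predictability), gives
\[
    \mathbb P\Bigl(\sup_{t\le1}|N^{n,m}_t|>\eta\Bigr)
    \le\frac{\varepsilon}{\eta^2}
    +\mathbb P\bigl(\operatorname{tr}\langle M^{(m)}\rangle_1>\varepsilon\bigr)
\]
for every \(\varepsilon>0\). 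The right-hand side does not depend on \(n\). Letting \(m\to\infty\) and then \(\varepsilon\downarrow0\) yields \eqref{eq:app_first_companion_uniform_stability} after splitting \(\eta\) between the two parts.

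The main point to check carefully is the Fubini/stochastic-Fubini step identifying \(\mathcal I^n(D)\) with the stochastic integral of \(\psi_n\). The case of \(A^{(m)}\) is pathwise Lebesgue–Stieltjes and poses no difficulty. For \(M^{(m)}\), the integrand \((s,v)\mapsto n\mathbf 1_{\{\varphi_n(s)<v\le s\}}\) is bounded and deterministic, so the stochastic Fubini theorem applies after localizing \(M^{(m)}\) to be square integrable. One must also confirm that using \(D_{s-}\) versus \(D_s\) and the endpoint conventions at grid points do not matter. Both hold because these differences occur only on Lebesgue-null sets of \(s\), and because \(\psi_n\) vanishes nowhere on cells, so its value on the boundary is irrelevant for the identity.
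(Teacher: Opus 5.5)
Your argument is essentially the paper's: write \(\mathcal I^n\) of the difference as a stochastic integral against a deterministic weight in \([0,1]\), bound the finite-variation part by \(\operatorname{Var}(A^{(m)})_1\), and control the martingale part through \(\operatorname{tr}\langle M^{(m)}\rangle_1\) by Lenglart's inequality, uniformly in \(n\). The paper obtains the representation by cellwise integration by parts against \(u\mapsto u\) rather than by stochastic Fubini, which avoids the localization step.

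There is one slip to fix. Your weight \(\psi_n(v)=n(\varphi_n(v)+1/n-v)\) equals \(1\) at every grid point. It is therefore right-continuous, not left-continuous. Fubini on the cell \((k/n,(k+1)/n]\) actually produces the weight \(n((k+1)/n-v)\), which is the paper's \(\vartheta_n(v)=n(\lceil nv\rceil/n-v)\) and vanishes at grid points. The reason is that a jump of \(D\) at \(k/n\) is already contained in \(D_{\varphi_n(s)}\), so it contributes nothing to \(\mathcal I^n(D)\). Consequently
\(\int_{(0,\varphi_n(t)]}\psi_n\,dD-\mathcal I^n_t(D)=\sum_{0<k/n\le\varphi_n(t)}\Delta D_{k/n}\).

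Your justification that the boundary value is irrelevant because \(\psi_n\) vanishes nowhere on cells is not valid. The integrator \(dD\) can charge single points, so Lebesgue-null-set arguments do not apply to it. The repair is immediate. Either use \(\vartheta_n\), which is still deterministic, hence predictable, with \(0\le\vartheta_n\le1\), so all your bounds go through verbatim. Or note that \(V\) and \(V^{(m)}\) almost surely have no jumps at deterministic times, because their compensators have no time atoms. Predictability of \(\psi_n\) was never an issue, since any deterministic Borel function of time is predictable.
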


\begin{proof}
Put
\[
    \bar\varphi_n(s)\coloneqq \frac{\lceil ns\rceil}{n},
    \qquad
    \vartheta_n(s)\coloneqq n\{\bar\varphi_n(s)-s\},
    \qquad 0\le s\le1.
\]
This deterministic left-continuous weight is predictable, satisfies
\(0\le\vartheta_n\le1\), and equals zero at every grid point \(k/n\).
Cellwise integration by parts against the continuous finite-variation
function \(u\mapsto u\) gives, for a semimartingale \(X\) on a cell of
length \(T\),
\[
    TX_T=\int_0^T X_{u-}\,du+\int_{(0,T]}u\,dX_u.
\]
Consequently, with \(X_u=V_{k/n+u}-V_{k/n}\) and \(T=1/n\),
\[
    n\int_{k/n}^{(k+1)/n}(V_{s-}-V_{k/n})\,ds
    =\int_{(k/n,(k+1)/n]}n\bigl((k+1)/n-s\bigr)\,dV_s.
\]
Summing over the completed cells yields
\begin{equation}\label{eq:app_first_companion_stochastic_integral}
    \mathcal I_t^n(V)
    =
    \int_{(0,\varphi_n(t)]}
    \vartheta_n(s)\,dV_s.
\end{equation}
In particular, a jump exactly at a grid point has weight zero.

Apply this identity to \(V-V^{(m)}=A^{(m)}+M^{(m)}\).
The finite-variation contribution is bounded by
\(\operatorname{Var}(A^{(m)})_1\), uniformly in \(n\), while
\(N^{(m,n)}\coloneqq \int_0^\cdot\vartheta_n(s)\,dM_s^{(m)}\) is a locally square
integrable local martingale starting at zero and satisfies
\(\operatorname{tr}\langle N^{(m,n)}\rangle_1
\le\operatorname{tr}\langle M^{(m)}\rangle_1\).
For an \(\mathbb R^r\)-valued locally square integrable local martingale
\(M\) with \(M_0=0\), applying Lenglart's inequality
to \(|M|^2\), which is
Lenglart-dominated by \(\operatorname{tr}\langle M\rangle\), gives
\begin{equation}\label{eq:app_lenglart_bound}
    \mathbb P(\|M\|_\infty>\eta)
    \le\frac{v}{\eta^2}
       +\mathbb P(\operatorname{tr}\langle M\rangle_1\ge v),
    \qquad \eta,v>0.
\end{equation}
Since
\[
    \sup_{t\le1}
    |N_{\varphi_n(t)}^{(m,n)}|
    \le
    \|N^{(m,n)}\|_\infty,
\]
by \eqref{eq:app_lenglart_bound}, applied to \(N^{(m,n)}\) with threshold
\(\eta/2\),
\begin{align}
    &\sup_{n\ge1}\mathbb P\left(
       \|\mathcal I^n(V)-\mathcal I^n(V^{(m)})\|_\infty>\eta\right)\notag\\
    &\qquad\le
       \mathbb P\left(\operatorname{Var}(A^{(m)})_1>\frac\eta2\right)
       +\frac{4v}{\eta^2}
       +\mathbb P\left(\operatorname{tr}\langle M^{(m)}\rangle_1\ge v\right).
       \notag
\end{align}
Let \(m\to\infty\) using
\eqref{eq:app_first_companion_semimartingale_control}, then \(v\downarrow0\).
\end{proof}

\subsection{The joint limit for the regularized process}

\begin{lemma}[Joint convergence with finitely many jumps]
\label{le:first_companion_grid_synchronization}
Let \(k\ge0\), and suppose that \(c_n\) converges to a continuous path
\(c\) in \(D([0,1],\mathbb R^m)\) with the \(J_1\)-topology.  Let
\[
    0<t_1<\cdots<t_k<1,
    \qquad
    t_{p,n}\longrightarrow t_p,
\]
and let \(v_{p,n},v_p\in\mathbb R^m\) satisfy
\(v_{p,n}\to v_p\).  Define
\[
    x_n(t)
    =
    c_n(t)
    +
    \sum_{p=1}^k
    v_{p,n}\mathbf 1_{[t_{p,n},1]}(t),
\]
and
\[
    x(t)
    =
    c(t)
    +
    \sum_{p=1}^k
    v_p\mathbf 1_{[t_p,1]}(t).
\]
Then \(x_n\to x\) in the vector-valued \(J_1\)-topology.
\end{lemma}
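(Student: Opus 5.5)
The plan is to build a single time change that carries the jump times $t_{p,n}$ onto $t_p$ and to control the continuous part with the same map. Since $c$ is continuous, $J_1$-convergence $c_n\to c$ is equivalent to uniform convergence, so $\|c_n-c\|_\infty\to0$. The times $t_1<\cdots<t_k$ are distinct and lie in the open interval $(0,1)$. Hence for $n$ large the $t_{p,n}$ are also strictly ordered and lie in $(0,1)$.

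For such $n$, let $\lambda_n:[0,1]\to[0,1]$ be the piecewise linear increasing homeomorphism with $\lambda_n(0)=0$, $\lambda_n(1)=1$ and $\lambda_n(t_p)=t_{p,n}$ for $p=1,\dots,k$. Its deviation $\sup_t|\lambda_n(t)-t|$ is bounded by $\max_p|t_{p,n}-t_p|$, which tends to zero. Since $\mathbf 1_{[t_{p,n},1]}(\lambda_n(t))=\mathbf 1_{[t_p,1]}(t)$ for every $t$, we obtain
\[
    x_n(\lambda_n(t))-x(t)
    =c_n(\lambda_n(t))-c(t)
    +\sum_{p=1}^k(v_{p,n}-v_p)\mathbf 1_{[t_p,1]}(t).
\]

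We then bound the first term by
\[
    |c_n(\lambda_n(t))-c(t)|\le\|c_n-c\|_\infty+|c(\lambda_n(t))-c(t)|.
\]
The second piece tends to zero uniformly in $t$ by uniform continuity of $c$ on $[0,1]$. The remaining sum is bounded by $\sum_p|v_{p,n}-v_p|\to0$. Hence $\|x_n\circ\lambda_n-x\|_\infty\to0$ and $\|\lambda_n-\mathrm{id}\|_\infty\to0$. Because one time change $\lambda_n$ serves all $m$ coordinates at once, this is exactly convergence in the vector-valued $J_1$-topology. The case $k=0$ is immediate.

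The only point needing care is the equivalence, for a continuous limit, between $J_1$-convergence and uniform convergence. If $\mu_n$ are time changes with $\|c_n\circ\mu_n-c\|_\infty\to0$ and $\|\mu_n-\mathrm{id}\|_\infty\to0$, then
\[
    \|c_n-c\|_\infty\le\|c_n\circ\mu_n-c\|_\infty+\|c\circ\mu_n-c\|_\infty\to0,
\]
again by uniform continuity of $c$. There is no real obstacle beyond this. Some of the $v_p$ may be zero; this causes no difficulty, because the time change is built from the $t_{p,n}$ regardless of the jump sizes.
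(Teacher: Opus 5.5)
Your proposal is correct and follows the paper's proof essentially step for step. Both use uniform convergence of \(c_n\) (from continuity of \(c\)), the piecewise linear time change \(\lambda_n\) with \(\lambda_n(t_p)=t_{p,n}\), and the same identity for \(x_n\circ\lambda_n-x\), controlled by uniform continuity of \(c\) and \(v_{p,n}\to v_p\). Your extra checks, namely \(\|\lambda_n-\mathrm{id}\|_\infty\le\max_p|t_{p,n}-t_p|\) and the \(J_1\)-to-uniform argument, fill in details the paper states without proof.
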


\begin{proof}
Since \(c_n\to c\) in \(J_1\) and \(c\) is continuous, the convergence
is uniform. If \(k=0\), this proves the assertion. For all sufficiently
large \(n\),
\(0<t_{1,n}<\cdots<t_{k,n}<1\).
Let \(\lambda_n\) be the increasing piecewise linear homeomorphism
of \([0,1]\) satisfying
\[
    \lambda_n(0)=0,\qquad \lambda_n(1)=1,\qquad
    \lambda_n(t_p)=t_{p,n},\quad 1\le p\le k.
\]
Then \(\|\lambda_n-\mathrm{id}\|_\infty\to0\), and
\[
\begin{aligned}
    x_n(\lambda_n(t))-x(t)
    &=c_n(\lambda_n(t))-c(t)\\
    &\quad+\sum_{p=1}^k
       (v_{p,n}-v_p)\mathbf1_{[t_p,1]}(t).
\end{aligned}
\]
The right-hand side converges uniformly to zero, since \(c\) is
uniformly continuous, \(c_n\to c\) uniformly, and
\(v_{p,n}\to v_p\) for every \(p\). Thus \(x_n\to x\) in the
vector-valued \(J_1\)-topology.
\end{proof}

\begin{lemma}[Joint limit for the regularized process]
\label{le:first_companion_regular_driver_limit}
For every fixed \(m\),
\begin{equation}\label{eq:app_first_companion_regular_driver_limit}
    \left(
        (V^{(m)})^{[n]},
        \mathcal I^n(V^{(m)})
    \right)
    \Rightarrow^{\mathrm{st}}
    \left(
        V^{(m)},
        \mathcal I^{(m)}
    \right)
\end{equation}
in \(D([0,1],\mathbb R^{2r})\) with the vector-valued \(J_1\)-topology.
The convergence is stable with respect to \(\mathcal G\).
\end{lemma}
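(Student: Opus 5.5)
The plan is to exploit the fact that, for fixed \(m\), the regularized process has only finitely many jumps on \([0,1]\). The jumps of \(V^{(m)}\) come from atoms of \(p\) in \([0,1]\times E_m\), and \(\lambda(E_m)<\infty\). So almost surely there are finitely many jump times \(S_1<\cdots<S_K\) in \((0,1)\), none at a grid point. We then decompose
\[
    V^{(m)}=V_0+C^{(m)}+\sum_{p\le K}\Delta V^{(m)}_{S_p}\mathbf 1_{[S_p,1]},
\]
where \(C^{(m)}=\int_0^\cdot b^{(m),0}_s\,ds+\int_0^\cdot a^{(m)}_s\,dW^0_s-\int_0^\cdot\!\int_{E_m}h_0(\delta)\,\lambda(de)\,ds\) is continuous with bounded drift and a bounded continuous diffusion coefficient.

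By the integration-by-parts identity \eqref{eq:app_first_companion_stochastic_integral}, which is linear in \(V\),
\[
    \mathcal I^n_t(V^{(m)})
    =\mathcal I^n_t(C^{(m)})
    +\sum_{p\le K}\vartheta_n(S_p)\,\Delta V^{(m)}_{S_p}\,
      \mathbf 1_{\{\bar\varphi_n(S_p)\le t\}} .
\]
Likewise \((V^{(m)})^{[n]}=V_0+(C^{(m)})^{[n]}+\sum_p\Delta V^{(m)}_{S_p}\mathbf 1_{[\bar\varphi_n(S_p),1]}\). Both coordinates of the prelimit vector therefore jump at the common times \(t_{p,n}=\bar\varphi_n(S_p)\to S_p\), and the jump sizes are \(\Delta V^{(m)}_{S_p}\) and \(\vartheta_n(S_p)\Delta V^{(m)}_{S_p}\). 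Here \(\vartheta_n(S_p)=1-\{nS_p\}\) up to the null event of a grid jump.

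By the Skorokhod representation theorem applied conditionally (or directly via a stable-convergence version of the continuous mapping theorem), \cref{le:first_companion_grid_synchronization} reduces \eqref{eq:app_first_companion_regular_driver_limit} to the joint \(\mathcal G\)-stable convergence
\[
    \Bigl((C^{(m)})^{[n]},\ \mathcal I^n(C^{(m)}),\ (1-\{nS_p\})_{p\le K}\Bigr)
    \Rightarrow^{\mathrm{st}}
    \Bigl(C^{(m)},\ \mathcal I^c,\ (\kappa_p)_{p\le K}\Bigr),
\]
with the following properties. First, \(\mathcal I^c=\tfrac12 C^{(m)}+\tfrac1{\sqrt{12}}\int_0^\cdot a^{(m)}_s\,dW'_s\) is continuous. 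Second, the \(\kappa_p\) are i.i.d. uniform and independent of \(W'\) and \(\mathcal G\). Recombining the jump terms then gives exactly \(\mathcal I^{(m)}\) of \eqref{eq:app_first_companion_regular_limit}, because \(1-\kappa_p\) is again uniform. Relabeling the marks of \(\bar p\) in law is allowed, since stable limits are characterized by their \(\mathcal G\)-conditional law. The first coordinate converges uniformly to \(C^{(m)}\) almost surely, because \(C^{(m)}\) is continuous.

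For \(\mathcal I^n(C^{(m)})\) we invoke \cite[Theorem~6.1.2]{jacod2012discretization} in its continuous form. The process \(C^{(m)}\) is a continuous Itô semimartingale with bounded drift and bounded, adapted, continuous volatility, hence satisfies (H). The theorem yields stable convergence, in the uniform topology, to \(\tfrac12C^{(m)}+\tfrac1{\sqrt{12}}\int a^{(m)}\,dW'\). Alternatively, one can split off the drift part, which converges to \(\tfrac12\int b^{(m),0}\) pathwise by \cref{le:cadlag_mesh_L1}. The martingale part \(\int\vartheta_n\,a^{(m)}\,dW^0\) is then handled by the standard stable martingale CLT (\cite[Theorem~2.2.15]{jacod2012discretization}). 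Its predictable bracket \(\int\vartheta_n^2\,c^{(m)}\,ds\) converges to \(\tfrac13\int c^{(m)}\), since \(\vartheta_n^2\) averages to \(1/3\) on cells and \(c^{(m)}\) is continuous. Its bracket with \(W^0\) converges to \(\tfrac12\int a^{(m)}\), and its brackets with all bounded \(\mathcal G\)-martingales orthogonal to \(W^0\) vanish. This gives the limit \(\tfrac12\int a^{(m)}\,dW^0+\bigl(\tfrac13-\tfrac14\bigr)^{1/2}\int a^{(m)}\,dW'\).

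The main obstacle is the joint statement: the fractional parts \(\{nS_p\}\) must become asymptotically i.i.d. uniform, independent of \(\mathcal G\) and of the Gaussian limit. Here I would follow the argument behind \cite[Lemma~4.4.6 and the proof of Theorem~6.1.2]{jacod2012discretization}. Since \(\mathfrak q\) has the atomless time component \(ds\) and \(\lambda(E_m)<\infty\), the jump times \(S_p\) are the points of a Poisson process. Conditionally on \(W^0\) and on the marks, they admit densities. The map \(s\mapsto\{ns\}\) applied to variables with densities converges stably to uniforms by the Riemann–Lebesgue lemma (or Fourier coefficients \(e^{2\pi i k n S_p}\to0\) in the stable sense). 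To obtain joint independence from the Gaussian part, I would condition on the finite configuration of atoms in \(E_m\). Given that configuration, the remaining noise \(W^0\) is still Brownian, because \(W^0\) and \(p\) are independent in the Grigelionis construction, and the martingale CLT applies path by path with \(S_p\) deterministic. Then I would integrate over the configuration, using that the Gaussian limit does not depend on the \(S_p\). If this conditioning is delicate because \(a^{(m)}\) depends on the past jumps, a fallback is to apply \cite[Theorem~6.1.2]{jacod2012discretization} directly to \(V^{(m)}\) with the identity map. That gives the joint Gaussian–mark limit for \(\mathcal I^n(V^{(m)})\) alone, and the jumps of \(\mathcal I^n(V^{(m)})\) then recover \(1-\{nS_p\}\) continuously in \(J_1\), since the jump sizes \(\Delta V^{(m)}_{S_p}\neq0\) are \(\mathcal G\)-measurable.
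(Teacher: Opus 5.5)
Your reduction matches the paper's: finitely many jumps, the split \(V^{(m)}=V_0+C+\sum_p\Delta V^{(m)}_{S_p}\mathbf 1_{[S_p,1]}\), weights \(\vartheta_n(S_p)=1-\{nS_p\}\) placed at \(\bar\varphi_n(S_p)\), and then \cref{le:first_companion_grid_synchronization} on a Skorokhod representation conditioned on the number of jumps. Where you differ is in how the fractional parts are obtained jointly with the continuous part.

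The paper applies \cite[Theorem~6.1.2]{jacod2012discretization} to the augmented process \((V^{(m)},C)\), with diffusion coefficient \((a;a)\) and jump coefficient \((\delta;0)\). This gives \((\mathcal I^n(V),\mathcal I^n(C))\) jointly in vector \(J_1\). The pure-jump difference \(K^n\) then converges by a linear map, and its weights are read off as increments over fixed intervals around each jump time.

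\emph{Primary route.} You instead condition on the atoms of \(p\) in \([0,1]\times E_m\) and use Riemann--Lebesgue. As written, one step is false and one is unproved.
\begin{itemize}
\item The false step: you say the Gaussian limit does not depend on the \(S_p\). It can, because \(a^{(m)}\) is a backward average of \(\pi_m(c^{1/2})\), which may depend on past jumps. This is harmless, since Riemann--Lebesgue against the density of the atom times works for any bounded function of the configuration.
\item The unproved step: you need \(L^1\), not merely weak, convergence of \(\mathbb E[\xi\Phi(C^{[n]},\mathcal I^n(C))\mid\text{configuration}]\). Stable convergence alone cannot be paired with the oscillating weights \(1-\{nS_p\}\). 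This conditional CLT is plausible. After initially enlarging \(\mathcal G_t\) by the configuration, \(W^0\) stays Brownian, because \(\mathcal G_t\), the future of \(W^0\) and the future of \(p\) are mutually independent. But it still needs a real regular-conditional-probability argument, which you only assert.
\end{itemize}

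\emph{Fallback route.} Your fallback is sound and suffices on its own. However, ``recover \(1-\{nS_p\}\) continuously in \(J_1\)'' needs justification, since evaluating a jump at a prescribed time is not \(J_1\)-continuous. Two facts do the work:
\begin{itemize}
\item Every jump of \(\mathcal I^n(C)\) is bounded by \(\omega_C(1/n)\to0\). So near \(S_p\), only the grid point \(\bar\varphi_n(S_p)\) carries a non-negligible jump of \(\mathcal I^n(V^{(m)})\).
\item The limit jump \(\kappa_p\Delta V^{(m)}_{S_p}\) is almost surely nonzero.
\end{itemize}
Work on a Skorokhod representation with \(C\), \(S_p\) and \(\Delta V^{(m)}_{S_p}\) adjoined, and transfer the structural identities by measurability as the paper does. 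Then the \(J_1\) time changes satisfy \(\lambda_n(S_p)=\bar\varphi_n(S_p)\) for large \(n\). The same \(\lambda_n\) gives \((V^{(m)})^{[n]}\circ\lambda_n\to V^{(m)}\) uniformly, which yields the vector \(J_1\) limit directly.

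\emph{Comparison.} The fallback replaces the paper's augmentation with a jump-matching argument that relies on continuity of \(C\). The augmentation reduces weight extraction to a trivial increment computation. Your primary route, if completed, would need only the continuous-case limit theorem rather than the jump part of \cite[Theorem~6.1.2]{jacod2012discretization}.
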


\begin{proof}
Fix \(m\) and suppress the superscript \((m)\) on
\(b,a,\delta,V,\mathcal I\) throughout this proof.
The process has finitely many nonzero jumps, since
\(\lambda(E_m)<\infty\).  Absolute continuity in time of the compensator
implies that, almost surely, no jump occurs at any deterministic grid
point, including \(0\) and \(1\).  Write their ordered times and sizes as
\(0<T_1<\cdots<T_N<1\) and \(\zeta_p\coloneqq\Delta V_{T_p}\).
Set \(L_t\coloneqq\sum_{p=1}^N\zeta_p\mathbf 1_{[T_p,1]}(t)\) and
\(C_t\coloneqq V_t-L_t\).  Since
\[
    (|h(\delta)|*\mathfrak q)_t
    \le t\|h\|_\infty\lambda(E_m)<\infty,
\]
we may expand the compensated integral to obtain
\[
    h(\delta)*(p-\mathfrak q)+\{\delta-h(\delta)\}*p
    =\delta*p-h(\delta)*\mathfrak q.
\]
Here \(\delta*p=L\) is a finite pathwise sum, so \(C\) is the continuous
semimartingale
\begin{equation}\label{eq:app_regular_continuous_part}
    C_t
    =
    V_0
    +
    \int_0^t\beta_s^c\,ds
    +
    \int_0^t a_s\,dW_s^0,
    \qquad
    \beta_s^c
    \coloneqq
    b_s-
    \int_E h(\delta(s,e))\,\lambda(de).
\end{equation}
The drift \(\beta^c\) is bounded because \(b\) and \(h\) are bounded and
\(\delta\) is supported on \(E_m\), with \(\lambda(E_m)<\infty\).

For each jump, put \(T_{p,n}\coloneqq\lceil nT_p\rceil/n\) and
\(\kappa_{n,p}\coloneqq n(T_{p,n}-T_p)\).  The exact decompositions are
\begin{align}
    V_t^{[n]}
    &=
    C_t^{[n]}
    +
    \sum_{p=1}^N
    \zeta_p\mathbf 1_{[T_{p,n},1]}(t),
    \label{eq:app_regular_grid_decomposition}\\*
    \mathcal I_t^n(V)
    &=
    \mathcal I_t^n(C)
    +
    \sum_{p=1}^N
    \kappa_{n,p}\zeta_p
    \mathbf 1_{[T_{p,n},1]}(t).
    \label{eq:app_regular_companion_decomposition}
\end{align}
The weight \(\kappa_{n,p}\) is the normalized length of the part of the
cell after \(T_p\), and both contributions appear at \(T_{p,n}\).

On the fixed marked extension, set
\begin{equation}\label{eq:app_regular_continuous_limit_companion}
    \mathcal I_t(C)
    \coloneqq
    \frac12\int_0^t\beta_s^c\,ds
    +
    \frac12\int_0^t a_s\,dW_s^0
    +
    \frac1{\sqrt{12}}\int_0^t a_s\,dW_s'.
\end{equation}
With this ordering, let \(\kappa_p\) denote the already assigned uniform
mark of \(\bar p\) attached to the corresponding atom at \(T_p\).
The bound
\[
    (|u h(\delta)|*\bar{\mathfrak q})_t
    \le\frac t2\|h\|_\infty\lambda(E_m)<\infty
\]
likewise gives
\[
    \{u h(\delta)\}*(\bar p-\bar{\mathfrak q})
    +\{u(\delta-h(\delta))\}*\bar p
    =\{u\delta\}*\bar p-\{u h(\delta)\}*\bar{\mathfrak q}.
\]
Since \(\int_0^1u\,du=1/2\), the continuous drift is
\(\frac12\beta^c\), and hence
\begin{equation}\label{eq:app_regular_limit_decomposition}
    \mathcal I_t
    =
    \mathcal I_t(C)
    +
    \sum_{p=1}^N
    \kappa_p\zeta_p\mathbf 1_{[T_p,1]}(t).
\end{equation}

To apply \cite[Theorem~6.1.2]{jacod2012discretization} jointly, set
\[
    \widehat V_t\coloneqq\begin{pmatrix}V_t\\C_t\end{pmatrix},\qquad
    \widehat a_s\coloneqq\begin{pmatrix}a_s\\a_s\end{pmatrix},\qquad
    \widehat\delta(s,e)\coloneqq
        \begin{pmatrix}\delta(s,e)\\0\end{pmatrix}.
\]
The diffusion coefficient \(\widehat a\) is continuous, and
\(|\widehat\delta|\wedge1\le\mathbf1_{E_m}\).
Recall the canonical truncation \(h_0(z)=z\mathbf1_{\{|z|\le1\}}\).
Using the already defined canonical drift \(b^0\), the augmented drift is
\begin{equation}\label{eq:app_regular_augmented_canonical_drift}
    \widehat b_s^0
    \coloneqq
    \begin{pmatrix}
        b_s^0\\
        \beta_s^c
    \end{pmatrix},
    \qquad
    b_s^0
    =
    b_s
    +
    \int_E
    \{h_0(\delta(s,e))-h(\delta(s,e))\}\,\lambda(de).
\end{equation}
Here Assumption~(H) requires a locally bounded canonical drift,
a c\`adl\`ag diffusion coefficient, and, after localization, a deterministic
upper bound in \(L^2(\lambda)\) for \(|\widehat\delta|\wedge1\).
The canonical drift is bounded, \(\widehat a\) is continuous, and
\[
    |\widehat\delta|\wedge1\le\mathbf1_{E_m},
    \qquad \int_E\mathbf1_{E_m}^2\,d\lambda=\lambda(E_m)<\infty.
\]
Hence \(\widehat V\) satisfies the hypotheses of
Theorem~6.1.2.
Applying that theorem to the identity map with mesh \(1/n\) yields
\begingroup\postdisplaypenalty=10000
\begin{equation}\label{eq:app_regular_augmented_companion_limit}
    \left(
        \mathcal I^n(V),
        \mathcal I^n(C)
    \right)
    \Rightarrow^{\mathrm{st}}
    \left(
        \mathcal I,
        \mathcal I(C)
    \right)
\end{equation}\endgroup
in the vector-valued \(J_1\)-topology, stably with respect to
\(\mathcal G\).  In the limit, the two auxiliary Brownian integrals use the same
\(W'\): before the factor \(1/\sqrt{12}\), their conditional covariance
density is
\[
    \widehat a_s\widehat a_s^\top
    =\begin{pmatrix}a_sa_s^\top&a_sa_s^\top\\
                     a_sa_s^\top&a_sa_s^\top\end{pmatrix}.
\]
The existing \(W'\) is independent of \(\mathcal G\) and realizes this
conditional Gaussian covariance.  At \(T_p\), one uniform mark in the
cited theorem multiplies the whole jump vector \((\zeta_p,0)\).
The already assigned \((\kappa_p)\) have the required conditional uniform
law and independence, both among themselves and from \(W'\).
Thus the limiting process in that theorem
may be realized on the present extension using the existing \(W'\) and
\((\kappa_p)\).  Its two blocks are then precisely \(\mathcal I\) and
\(\mathcal I(C)\), as in \eqref{eq:app_regular_limit_decomposition}.

The fixed linear map \(L(x,y)\coloneqq(y,x-y)\) is continuous on the joint
vector-valued \(J_1\)-space, since
\(\|(Lz_n)\circ\lambda_n-Lz\|_\infty
\le\|L\|_{\mathrm{op}}\|z_n\circ\lambda_n-z\|_\infty\).
By the continuous mapping theorem,
\begin{equation}\label{eq:app_regular_continuous_jump_companion_limit}
    \left(
        \mathcal I^n(C),
        K^n
    \right)
    \Rightarrow^{\mathrm{st}}
    \left(
        \mathcal I(C),
        K
    \right),
    \qquad
    K^n
    \coloneqq
    \mathcal I^n(V)-\mathcal I^n(C),
    \quad
    K
    \coloneqq
    \mathcal I-\mathcal I(C).
\end{equation}
Here \(K_t^n=\sum_{p=1}^N\kappa_{n,p}\zeta_p
\mathbf 1_{[T_{p,n},1]}(t)\) and
\(K_t=\sum_{p=1}^N\kappa_p\zeta_p\mathbf 1_{[T_p,1]}(t)\).

It remains to prove joint convergence of
\((V^{[n]},\mathcal I^n(V))\) to \((V,\mathcal I)\) in the
vector-valued \(J_1\)-topology.
Fix a bounded \(\mathcal G\)-measurable variable \(\xi\) and
\(k\) with \(\mathbb P(N=k)>0\).
The jump data \((T_p,\zeta_p)_{p\le k}\) are \(\mathcal G\)-measurable
on \(\{N=k\}\); give the tuple the fixed value
\(((0,0),\ldots,(0,0))\) on its complement to obtain a globally
defined random element.  This value is irrelevant after multiplication
by \(\mathbf1_{\{N=k\}}\).
Continuity of \(C\) gives \(C^{[n]}\to C\) uniformly almost surely, so the tuple
\[
    (\xi,C^{[n]},(T_p,\zeta_p)_{p\le k},\mathbf 1_{\{N=k\}})
\]
converges in probability. The stable version of Slutsky's theorem
therefore allows us to adjoin this tuple to
\eqref{eq:app_regular_continuous_jump_companion_limit}.
Multiplying bounded continuous test functions by \(\mathbf1_{\{N=k\}}\)
and dividing by \(\mathbb P(N=k)\) gives joint convergence under the
conditional laws given \(N=k\).
Under these conditional laws, the joint convergence of
\(\bigl(\xi,C^{[n]},(\mathcal I^n(C),K^n),(T_p,\zeta_p)_{p\le k}\bigr)\)
is in the Polish space
\[
    \mathbb R\times D([0,1],\mathbb R^r)
    \times D([0,1],\mathbb R^{2r})
    \times(\mathbb R\times\mathbb R^r)^k,
\]
equipped with the product topology, where each Skorokhod space carries
the \(J_1\)-topology.
By the Skorokhod representation theorem, applied to this finite product
of Polish spaces, we may represent
the joint random elements so that they converge almost surely.
Denote the jump data in the \(n\)th represented copy by
\((T_p^n,\zeta_p^n)\), with limit \((T_p,\zeta_p)\).
On the Skorokhod representation space, these jump times and sizes
converge in Euclidean space, \((\mathcal I^n(C),K^n)\) converges in the
vector-valued \(J_1\)-topology, and \(C^{[n]}\) converges uniformly.  For simplicity, we use the same
notation for the represented variables until we return to the original
conditional laws.

The maps
\[
    (x,t)\mapsto x(t),\qquad (x,t)\mapsto x(t-),\qquad
    (x,t)\mapsto\Delta x_t
\]
are Borel on \(D([0,1],\mathbb R^r)\times[0,1]\), with \(x(0-)=x(0)\).
The map
\[
    ((t_p,z_p)_{p\le k})\longmapsto
    \sum_{p=1}^k z_p\mathbf1_{[t_p,1]}
\]
and its mesh-shifted version, with \(t_p\) replaced by
\(\lceil nt_p\rceil/n\), are also Borel.  The identities above are equalities between Borel measurable functions
of the joint random element. Since they hold almost surely for the original
variables, equality of the joint laws implies that they also hold almost
surely on the Skorokhod representation space.
Hence, putting
\[
    S_{p,n}\coloneqq\frac{\lceil nT_p^n\rceil}{n},\qquad
    q_{p,n}\coloneqq n(S_{p,n}-T_p^n)\zeta_p^n,\qquad
    q_p\coloneqq\Delta K_{T_p},
\]
we have \(K_t^n=\sum_{p=1}^kq_{p,n}\mathbf 1_{[S_{p,n},1]}(t)\)
and \(K_t=\sum_{p=1}^kq_p\mathbf 1_{[T_p,1]}(t)\).
We have \(|S_{p,n}-T_p|\le n^{-1}+|T_p^n-T_p|\to0\).
For each \(p\), choose continuity points
\(a_p<T_p<b_p\) enclosing no other limiting jump time.  Eventually the
interval contains only the possible jump \(S_{p,n}\) of \(K^n\), so
\begin{equation}\label{eq:app_regular_jump_size_recovery}
    q_{p,n}=K^n_{b_p}-K^n_{a_p}
    \longrightarrow K_{b_p}-K_{a_p}=q_p.
\end{equation}
Hence \(q_{p,n}\to q_p\) for every \(p\le k\).

Since \(C\) and \(\mathcal I(C)\) are continuous,
\((C^{[n]},\mathcal I^n(C))\) converges uniformly to
\((C,\mathcal I(C))\).
Apply \cref{le:first_companion_grid_synchronization} with times
\(S_{p,n}\to T_p\) and vector jump sizes
\((\zeta_p^n,q_{p,n})\to(\zeta_p,q_p)\).
The exact decompositions reconstruct the copies of
\((V^{[n]},\mathcal I^n(V))\) and \((V,\mathcal I)\), yielding joint
convergence in \(D([0,1],\mathbb R^{2r})\) with the vector-valued
\(J_1\)-topology, jointly with the corresponding copies of \(\xi\).

These copies of \(\xi\) are bounded by \(\|\xi\|_\infty\), so dominated
convergence with a bounded continuous test functional proves the stable
test-function limit under the conditional laws on \(\{N=k\}\).
Summing over \(k\le K\) and bounding the tail by
\(2\|\xi\|_\infty\|\Phi\|_\infty\mathbb P(N>K)\), for a bounded
continuous test functional \(\Phi\), proves the \(\mathcal G\)-stable
limit.  Restoring \((m)\) yields
\eqref{eq:app_first_companion_regular_driver_limit}.
\end{proof}

\subsection{Removal of the regularization}

\begin{proof}[Proof of \cref{thm:integrated_discretization_limit}]
Apply \cite[Proposition~2.2.4]{jacod2012discretization} in
\(D([0,1],\mathbb R^{2r})\) with its vector-valued \(J_1\)-topology.
For fixed \(m\), the joint \(\mathcal G\)-stable limit is
\cref{le:first_companion_regular_driver_limit}.
The approximation is uniform in \(n\) by
\cref{le:first_companion_regular_approximation,le:first_companion_uniform_stability},
using
\[
    \|V^{[n]}-(V^{(m)})^{[n]}\|_\infty
    \le\|V-V^{(m)}\|_\infty.
\]
The processes \((V^{(m)},\mathcal I^{(m)})\) converge uniformly
in probability to \((V,\mathcal I(V))\) on the same very good filtered
extension by \eqref{eq:app_first_companion_limit_approximation}.
Since the convergence holds in probability on the common extension,
it is stable with respect to the full sigma-field of that extension,
and hence also with respect to \(\mathcal G\).
The cited proposition proves the desired joint \(\mathcal G\)-stable
limit, which is \(\mathcal F\)-stable because
\(\mathcal F\subseteq\mathcal G\).
\end{proof}

\section[Occupation Estimates and a Sobolev It\^o Formula for L\'evy-Driven SDEs]{Occupation Estimates and a Sobolev It\^o Formula for L\'evy-Driven SDEs}
\label{app:levy_smoothing}

This appendix verifies the hypotheses of \cref{prop:sm_euler_transfer}
under condition (B) of \cref{thm:sm_em_relaxed_regularity}.  Throughout, \(Y\) is an
\(\mathbb F\)-L\'evy process with characteristics \((b,c,F)\) relative
to the fixed truncation \(h\), and \(L=\sigma Y\) satisfies
\eqref{eq:levy_smoothing_K} for \(1<p<\infty\).
We use \(p'=p/(p-1)\) and set
\(Q=\sigma c\sigma^\top\), with entries
\(Q^{jk}=\sigma_\alpha^j c^{\alpha\beta}\sigma_\beta^k\).
For a vector-valued Sobolev function \(v\),
\(\partial_{jk}v\) denotes its componentwise weak second derivative,
using a Borel representative.

For background on transition densities of L\'evy processes, Knopova and Schilling
\cite{knopova2013note} study criteria for the existence and smoothness
of L\'evy transition densities, while Schilling, Sztonyk and Wang
\cite{schilling2012coupling} obtain symbol-based gradient estimates
for L\'evy semigroups.  For stable noise, Bogdan and Jakubowski
\cite{bogdan2007estimates} construct and estimate the heat kernel
under gradient perturbations with a Kato-class drift.
The occupation estimate needed here is proved directly from
\eqref{eq:levy_smoothing_K} below.

\subsection[Occupation estimates]{Occupation estimates}
\label{subsec:levy_occupation_estimates}

\begin{lemma}[Occupation estimate for a bounded drift]
\label{lem:levy_smoothing_occupation}
Let \(g:\mathbb R^d\to\mathbb R^d\) be bounded and globally Lipschitz,
and let
\[
 X_t=x_0+\int_0^t g(X_{s-})ds+L_t.
\]
For every \(0<t\le1\), the law of \(X_t\) has a density.  A jointly
measurable choice \(\rho\) satisfies
\begin{equation}\label{eq:levy_density_space}
 \rho\in L^\infty((0,1);L^1(\mathbb R^d))
          \cap L^1((0,1);L^{p'}(\mathbb R^d)).
\end{equation}
In particular, for some finite constant \(C_g\) and all
\(v\in L^p(\mathbb R^d)\),
\begin{equation}\label{eq:levy_smoothing_occupation}
 \sup_{a\in\mathbb R^d}\mathbb E\int_0^1|v(X_s+a)|ds
          \le C_g\|v\|_{L^p}.
\end{equation}
The same estimate holds with \(X_{s-}\) in place of \(X_s\).
\end{lemma}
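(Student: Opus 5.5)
The plan is a Duhamel (variation-of-constants) argument around the semigroup of \(L\), followed by a Volterra inequality with an integrable kernel. Let \(P_t\varphi(x)=\mathbb E\varphi(x+L_t)=\int\varphi(x+y)k_t(y)\,dy\), and let \(\mu_t\) denote the law of \(X_t\); since \(g\) is bounded and Lipschitz, \(X\) is the unique pathwise solution.

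\emph{Step 1: Duhamel identity for the law.} Fix \(t\in(0,1]\) and \(\varphi\in C_c^\infty(\mathbb R^d)\). For \(s<t\), the function \(u(s,x)=P_{t-s}\varphi(x)\) is smooth in \(x\) with bounded derivatives, and it solves the backward Kolmogorov equation \(\partial_su+\mathcal L u=0\), where \(\mathcal L\) is the generator of \(L\). I would apply It\^o's formula to \(u(s,X_s)\) on \([0,t-\epsilon]\). The jump and martingale parts are handled exactly as for \(L\) alone. The generator term cancels against \(\partial_su\), and only the extra drift \(g\) survives:
\[
 \mathbb E\,P_\epsilon\varphi(X_{t-\epsilon})=P_t\varphi(x_0)+\mathbb E\int_0^{t-\epsilon}g(X_s)\cdot\nabla P_{t-s}\varphi(X_s)\,ds .
\]
To pass to the limit \(\epsilon\downarrow0\), I write \(\nabla P_r\varphi=-(\nabla\check k_r)*\varphi\), where \(\check k_r(y)=k_r(-y)\). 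This gives \(\|\nabla P_r\varphi\|_\infty\le\|\nabla k_r\|_{L^1}\|\varphi\|_\infty\), which is integrable in \(r\) by \eqref{eq:levy_smoothing_K}. Dominated convergence then yields the mild equation
\begin{equation*}
 \mu_t=\delta_{x_0}*k_t-\int_0^t \operatorname{div}\bigl(k_{t-s}*(g\mu_s)\bigr)\,ds
\end{equation*}
in the weak sense. Its integral term is a function: it is \(\int_0^t(\nabla k_{t-s})*(g\mu_s)\,ds\), whose \(L^1\) norm is at most \(\|g\|_\infty\int_0^t\|\nabla k_r\|_{L^1}\,dr\). Since \(k_t\in L^1\) as well, \(\mu_t\) is absolutely continuous for every \(t>0\).

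\emph{Step 2: \(L^{p'}\) bound via Volterra.} I would solve the same equation by Picard iteration in the weighted space \(\mathcal B_\lambda\) of jointly measurable \(\rho\) with \(\int_0^1e^{-\lambda t}\|\rho_t\|_{L^{p'}}\,dt<\infty\). Set \(a(r)=\|\nabla k_r\|_{L^1}\). Young's inequality gives
\[
 \|(\nabla k_{t-s})*(g\rho_s)\|_{L^{p'}}\le\|g\|_\infty\,a(t-s)\|\rho_s\|_{L^{p'}} .
\]
Since \(\int_0^1e^{-\lambda r}a(r)\,dr\to0\) as \(\lambda\to\infty\), the fixed-point map is a contraction on \(\mathcal B_\lambda\) for large \(\lambda\), and its inhomogeneous term \(k_t(\cdot-x_0)\) lies there by \eqref{eq:levy_smoothing_K}. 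The same contraction also works in \(L^\infty((0,1);L^1)\), using \(\|k_t\|_{L^1}=1\). So the iterates converge in both spaces to a jointly measurable \(\tilde\rho\).

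To identify \(\tilde\rho_t\) with \(\mu_t\), note that the mild equation is also contractive in total variation in the same weighted sense, since \(a\in L^1\). Uniqueness among families of probability measures then gives \(\mu_t(dx)=\tilde\rho_t(x)\,dx\), which proves \eqref{eq:levy_density_space}.

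\emph{Step 3: occupation estimate.} By Fubini and H\"older,
\[
 \mathbb E\int_0^1|v(X_s+a)|\,ds=\int_0^1\!\int|v(x+a)|\rho_s(x)\,dx\,ds\le\|v\|_{L^p}\int_0^1\|\rho_s\|_{L^{p'}}\,ds .
\]
This holds uniformly in \(a\), because translation preserves the \(L^p\) norm. Finally, \(X_{s-}=X_s\) for all but countably many \(s\) on each path, so the integral over \(s\) is unchanged when \(X_s\) is replaced by \(X_{s-}\).

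The main obstacle is Step 1. The It\^o formula must be justified for \(u(s,x)=P_{t-s}\varphi(x)\) with a general L\'evy generator, including uncontrolled large jumps and the singularity of \(\nabla k_r\) as \(r\downarrow0\). I would first try stopping at \(t-\epsilon\), where \(u\) is \(C_b^\infty\) in \(x\) and \(C^1\) in \(s\), and then let \(\epsilon\downarrow0\) using only the \(L^1\)-integrability of \(a\) in \eqref{eq:levy_smoothing_K}.
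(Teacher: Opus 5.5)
Your proposal is correct and follows essentially the same route as the paper: a weak Duhamel formula from It\^o's formula applied to the backward semigroup $P_{t-s}\varphi$, and existence of a density for every $t>0$ from the $L^1$-valued mild equation. The remaining steps also coincide: a weighted contraction in $L^\infty((0,1);L^1)\cap L^1((0,1);L^{p'})$, identification of the fixed point with the law by uniqueness, and H\"older. The one difference in Step 1 is that the paper shifts time to $P_{t+\varepsilon-s}\varphi$ instead of stopping at $t-\epsilon$.

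One point needs fixing. The fixed point $\tilde\rho$ is not known a priori to be nonnegative or of mass one, so uniqueness "among families of probability measures" does not by itself identify it with $\mu$. The uniqueness must be for families of finite signed measures (or $L^1$ functions) with bounded total variation. Your total-variation contraction, applied to the difference $\mu_t-\tilde\rho_t\,dx$, gives exactly this, and the paper phrases its uniqueness in $L^\infty((0,1);L^1)$ for that reason.
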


\begin{proof}
Let \(P_t\varphi(x)=\mathbb E[\varphi(x+L_t)]\), and let
\(\mathcal A\) be its generator.
We use the standard properties of L\'evy semigroups and their
generators (see \cite[Chapter~3]{applebaum2009levy}).
For the semigroup of \(-L\),
\[
 \check P_r\psi(x)=\mathbb E[\psi(x-L_r)],\qquad
 k_t=\check P_{t-a}k_a,\qquad
 \partial_jk_t=\check P_{t-a}(\partial_jk_a),\quad t>a>0.
\]
The convolution semigroup of \(-L\) is strongly continuous on
\(L^1\) and \(L^{p'}\). Thus the identities above give strong
continuity of \(t\mapsto k_t\) in both spaces and of
\(t\mapsto\partial_jk_t\) in \(L^1\), for \(t>0\).
Choose jointly measurable versions.
The convolution integrals below take values in the separable space
\(L^1\), and their measurability and integrable norm bounds ensure
that the Bochner integrals are well defined.

The free semigroup \(P\) is a strongly continuous Feller contraction
semigroup.
For \(\varphi\in C_c^\infty\subset D(\mathcal A)\), its generator
identities give
\[
 \frac{d}{dr}P_r\varphi
   =\mathcal A P_r\varphi=P_r\mathcal A\varphi
\]
in \(C_0\).
For \(|\gamma|\le2\),
\(D^\gamma P_r\varphi=P_rD^\gamma\varphi\), with bounded derivatives.
The second-order Taylor estimate for small jumps and the finiteness of
the L\'evy measure outside the unit ball justify Fubini's theorem for
the L\'evy integral.  Consequently, the L\'evy--Khintchine expression
at \(P_r\varphi\) equals \(P_r\mathcal A\varphi=\mathcal AP_r\varphi\).
Fix \(0<t\le1\), a real-valued \(\varphi\in C_c^\infty\), and
\(\varepsilon>0\), and put
\[
 u_\varepsilon(s,x)=P_{t+\varepsilon-s}\varphi(x),\qquad 0\le s\le t.
\]
Since \(\varphi\in C_c^\infty\), its Fourier transform decays
faster than any power. Together with the at most quadratic growth
of the L\'evy symbol, the Fourier representation of the semigroup
shows that \(u_\varepsilon\) is jointly \(C^2\) in time and space
on a neighborhood of \([0,t]\times\mathbb R^d\).  Differentiation under the semigroup
and its contraction property give
\[
 \partial_su_\varepsilon+\mathcal Au_\varepsilon=0,\qquad
 \|u_\varepsilon\|_\infty\le\|\varphi\|_\infty,\qquad
 \|\nabla u_\varepsilon\|_\infty\le\|\nabla\varphi\|_\infty.
\]
This identifies \(\mathcal Au_\varepsilon\) with the
finite-variation term arising from the L\'evy part in It\^o's formula.
Applying It\^o's formula to \((s,X_s)\) and using the generator
of \(L\), we obtain the local martingale
\[
 N_s^\varepsilon=u_\varepsilon(s,X_s)-u_\varepsilon(0,x_0)
   -\int_0^s g(X_{r-})\cdot\nabla u_\varepsilon(r,X_{r-})\,dr.
\]
Let \(\nu_L\) be the L\'evy measure of \(L\) and set
\(H_\varepsilon(s,z)=u_\varepsilon(s,X_{s-}+z)-u_\varepsilon(s,X_{s-})\).
The preceding bounds give
\[
 |H_\varepsilon(s,z)|
 \le\|\nabla\varphi\|_\infty|z|\mathbf1_{\{|z|\le1\}}
       +2\|\varphi\|_\infty\mathbf1_{\{|z|>1\}},
\]
and hence
\[
 \mathbb E\int_0^t\int_{\mathbb R^d}
       |H_\varepsilon(s,z)|^2\nu_L(dz)\,ds
 \le t\left(\|\nabla\varphi\|_\infty^2
                  \int_{|z|\le1}|z|^2\nu_L(dz)
       +4\|\varphi\|_\infty^2\nu_L(\{|z|>1\})\right)<\infty.
\]
The continuous martingale integral is square-integrable as well,
since its expected quadratic variation is at most
\(t\|Q\|_{\mathrm{op}}\|\nabla\varphi\|_\infty^2\).
Thus both martingale integrals are well defined, and \(N^\varepsilon\)
is a local martingale.
Since
\[
 \sup_{s\le t}|N_s^\varepsilon|
 \le2\|\varphi\|_\infty
      +t\|g\|_\infty\|\nabla\varphi\|_\infty,
\]
\(N^\varepsilon\) is a true martingale.
Take expectations and let \(\varepsilon\downarrow0\).
The uniform convergence of \(P_\varepsilon\varphi\) to \(\varphi\) and
of \(P_\varepsilon\nabla\varphi\) to \(\nabla\varphi\), together with the
contraction property and \(X_s=X_{s-}\) for almost every \(s\), gives
\begin{equation}\label{eq:levy_smoothing_weak_duhamel}
 \mathbb E\varphi(X_t)
 =P_t\varphi(x_0)
 +\int_0^t
   \mathbb E\bigl[
      g(X_s)\cdot\nabla P_{t-s}\varphi(X_s)
   \bigr]\,ds.
\end{equation}

Write \(\mu_s=\mathcal L(X_s)\), \(M=\max_j\|g^j\|_\infty\), and
\(K(r)=\sum_{j=1}^d\|\partial_jk_r\|_{L^1}\).
Integration by parts in the convolution kernel gives the minus sign
in the following \(L^1\) function:
\begin{equation}\label{eq:levy_smoothing_actual_density}
 q_t=k_t(\cdot-x_0)
       -\sum_{j=1}^d\int_0^t
                   (\partial_jk_{t-s})*(g^j\mu_s)\,ds.
\end{equation}
For clarity, convolution with the signed measure \(g^j\mu_s\) means
\[
 [\partial_jk_{t-s}*(g^j\mu_s)](x)
       =\mathbb E[g^j(X_s)\partial_jk_{t-s}(x-X_s)].
\]
It is jointly measurable, and its \(L^1\) norm is bounded by
\(M\|\partial_jk_{t-s}\|_1\).  Since \(K\in L^1(0,1)\),
\eqref{eq:levy_smoothing_actual_density} is a well-defined Bochner
integral in \(L^1\) for each \(0<t\le1\).
Equation \eqref{eq:levy_smoothing_weak_duhamel} implies
\(\mu_t=q_t(x)dx\), first on \(C_c^\infty\) and hence as finite
measures.  Thus \(q_t\ge0\) almost everywhere and \(\|q_t\|_1=1\).
This proves the existence of a density before any \(L^{p'}\) estimate.

We next prove that its \(L^{p'}\)-norm is
integrable in time.
On the Banach space
\[
 \mathcal X=L^\infty((0,1);L^1)
                   \cap L^1((0,1);L^{p'}),
\]
use the equivalent norm
\[
 \|v\|_{\mathcal X,\lambda}
 =\operatorname*{ess\,sup}_{0<t<1}e^{-\lambda t}\|v_t\|_1
        +\int_0^1e^{-\lambda t}\|v_t\|_{p'}dt.
\]
Define
\[
 (\mathcal Tv)_t=k_t(\cdot-x_0)
        -\sum_j\int_0^t(\partial_jk_{t-s})*(g^jv_s)ds,
 \qquad K_\lambda=\int_0^1e^{-\lambda r}K(r)dr.
\]
For \(v\in\mathcal X\) and \(r=1,p'\), Young's inequality gives
\[
 \sum_j\|(\partial_jk_{t-s})*(g^jv_s)\|_{L^r}
        \le MK(t-s)\|v_s\|_{L^r}.
\]
These integrands are strongly measurable in both spaces, by
the strong continuity of \(\partial_jk\) in \(L^1\) and continuity
of convolution.  Since \(K\in L^1(0,1)\), the \(L^1\)-valued
Bochner integral is defined for every \(0<t\le1\), and the
\(L^{p'}\)-valued integral for almost every \(t\).  In fact, Tonelli's
theorem gives
\[
 \begin{aligned}
 \operatorname*{ess\,sup}_{0<t<1}
   \sum_j\int_0^t\|(\partial_jk_{t-s})*(g^jv_s)\|_1\,ds
 &\le M\|K\|_{L^1(0,1)}\|v\|_{L^\infty((0,1);L^1)},\\
 \int_0^1\sum_j\int_0^t
   \|(\partial_jk_{t-s})*(g^jv_s)\|_{p'}\,ds\,dt
 &\le M\|K\|_{L^1(0,1)}\|v\|_{L^1((0,1);L^{p'})}.
 \end{aligned}
\]
Together with \(\|k_t\|_1=1\) and the assumed integrability of
\(\|k_t\|_{p'}\), these bounds show that \(\mathcal T\) maps
\(\mathcal X\) into itself.
Young's inequality and Tonelli's theorem, applied separately to the
two terms in this norm, give
\begin{equation}\label{eq:levy_smoothing_contraction}
 \|\mathcal Tv-\mathcal Tw\|_{\mathcal X,\lambda}
         \le MK_\lambda\|v-w\|_{\mathcal X,\lambda}.
\end{equation}
The free term belongs to \(\mathcal X\) by
\eqref{eq:levy_smoothing_K}.  Moreover \(K_\lambda\to0\) as
\(\lambda\to\infty\).  Choose \(\lambda\) with \(MK_\lambda<1\).
The contraction principle provides a unique fixed point
\(\rho\in\mathcal X\).

The same contraction estimate in
\(L^\infty((0,1);L^1)\), without its \(L^{p'}\) term, gives
uniqueness in that larger space.  By \eqref{eq:levy_smoothing_actual_density}, the family
\(q=(q_t)_{0<t\le1}\) of densities of the laws of \(X_t\) solves this
equation and satisfies \(\|q_t\|_1=1\).
Hence \(q=\rho\) for almost every time.  This identifies the fixed
point with the probability law and proves
\eqref{eq:levy_density_space}, without assuming the existence of a density
or positivity in the fixed-point argument.
We take the jointly measurable family \((q_t)_{0<t\le1}\) as a version
of \(\rho\). Each \(q_t\) is a density of the law of \(X_t\), while
the \(L^{p'}\) assertion holds only for almost every time.
Finally, H\"older's inequality and translation invariance of the \(L^p\) norm yield
\[
 \mathbb E\int_0^1|v(X_s+a)|ds
 \le \|v\|_p\int_0^1\|\rho_s\|_{p'}ds
\]
uniformly in \(a\).  The left-limit version follows because
\(X_s=X_{s-}\) for Lebesgue-almost every \(s\), pathwise.
\end{proof}

\paragraph[Gaussian and \texorpdfstring{\(\alpha\)}{alpha}-stable transition-density bounds.]{Gaussian and \(\alpha\)-stable transition-density bounds.}
The bounds \eqref{eq:sm_stable_smoothing_bounds} can be checked without
a density theorem for the drifted equation.  For Brownian motion,
\(\|k_t\|_{p'}=Ct^{-d/(2p)}\) and
\(\|\nabla k_t\|_1=Ct^{-1/2}\) follow from the Gaussian density.
For \(1<\alpha<2\), represent a rotationally symmetric stable process
as Brownian motion subordinated by an independent
\(\alpha/2\)-stable subordinator \(T\); see
\cite[Example~1.3.28 and Theorem~3.3.15(1)]{applebaum2009levy},
where the subordinator index is our \(\alpha/2\).
Normalize \(\mathbb E[e^{-uT_t}]=e^{-tu^{\alpha/2}}\) and take
Brownian covariance \(2I\); other scales change only the constants.
All negative moments of \(T_1\) are finite, since for \(r>0\),
\[
 \mathbb E[T_1^{-r}]
   =\frac1{\Gamma(r)}\int_0^\infty
                 u^{r-1}e^{-u^{\alpha/2}}du<\infty.
\]
The Gaussian bounds, Minkowski's inequality, and
\(T_t\stackrel{d}=t^{2/\alpha}T_1\) give
\(\|k_t^{(\alpha)}\|_{p'}\le Ct^{-d/(\alpha p)}\) and
\(\|\nabla k_t^{(\alpha)}\|_1\le Ct^{-1/\alpha}\).
An invertible linear change of coordinates changes only the constants.
Convolution with the probability law of an independent \(J_t\) does
not increase either norm, by Young's inequality.  The bounds are
integrable in time when \(\alpha>1\) and
\(p>\max\{1,d/\alpha\}\), as asserted in
Subsection~\ref{sec:regularity_refinements_examples}.

\subsection{A Sobolev It\^o formula}
\label{subsec:levy_sobolev_ito}

\begin{lemma}[Sobolev It\^o formula]
\label{lem:levy_smoothing_ito}
Under the hypotheses of \cref{lem:levy_smoothing_occupation}, let
\(v\in C_c^1(\mathbb R^d;\mathbb R^m)
                  \cap W^{2,p}(\mathbb R^d;\mathbb R^m)\) and put
\(\Theta_v(x,z)=v(x+\sigma z)-v(x)-Dv(x)\sigma z\).
Then, indistinguishably on \([0,1]\),
\begin{equation}\label{eq:levy_smoothing_sobolev_ito}
 v(X_t)-v(x_0)
   =\int_0^t Dv(X_{s-})dX_s
      +\frac12\int_0^t Q^{jk}\partial_{jk}v(X_{s-})\,ds
      +\sum_{0<s\le t}\Theta_v(X_{s-},\Delta Y_s).
\end{equation}
The Hessian term is absolutely integrable in time, and the series
\(\sum_{0<s\le1}\Theta_v(X_{s-},\Delta Y_s)\) converges absolutely.  Moreover,
\begin{equation}\label{eq:levy_smoothing_remainder_integrability}
 \mathbb E\int_0^1\int_{|z|\le1}
                 |\Theta_v(X_{s-},z)|F(dz)ds<\infty.
\end{equation}
Any Borel representative of the weak Hessian gives the same time integral.
\end{lemma}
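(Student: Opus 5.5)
The proof will mollify \(v\) and pass to the limit in the classical It\^o formula, using the occupation estimate of \cref{lem:levy_smoothing_occupation} to control every term that involves second derivatives. Put \(v_m=v*\varrho_m\) for a smooth mollifier \(\varrho\). Then \(v_m\in C_c^\infty\), \(v_m\to v\) and \(Dv_m\to Dv\) uniformly (because \(v\in C_c^1\)), and \(D^2v_m\to D^2v\) in \(L^p\). Since \(v_m\) is smooth, the classical It\^o formula gives \eqref{eq:levy_smoothing_sobolev_ito} for \(v_m\), with the classical Hessian term and the absolutely convergent jump series \(\sum\Theta_{v_m}(X_{s-},\Delta Y_s)\).

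The first-order terms pass to the limit exactly as in the mollification argument in the proof of \cref{thm:sm_em_joint}. The uniform convergence of \(v_m\) handles the left side. The integrands \(Dv_m(X_{s-})\) are uniformly bounded and converge pointwise, so the dominated convergence theorem for stochastic integrals gives ucp convergence of \(\int Dv_m(X_{s-})\,dX_s\).

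For the Hessian term, \cref{lem:levy_smoothing_occupation} with \(a=0\) gives
\[
 \mathbb E\int_0^1|Q^{jk}(\partial_{jk}v_m-\partial_{jk}v)(X_{s-})|\,ds
 \le C_g|Q|\,\|D^2v_m-D^2v\|_{L^p}\to0 .
\]
The same bound applied to the difference of two Borel representatives of \(D^2v\), which vanishes in \(L^p\), shows that the time integral does not depend on the choice of representative, and that it is absolutely integrable.

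The jump remainder is the main obstacle. For \(|z|>1\), the remainders \(\Theta_{v_m}\) are bounded uniformly in \(m\) and converge pointwise, and there are only finitely many such jumps, so dominated convergence on the finite sum suffices. For \(|z|\le1\), I would write the Taylor remainder in integral form,
\[
 \Theta_w(x,z)=\int_0^1(1-\theta)\,D^2w(x+\theta\sigma z)[\sigma z,\sigma z]\,d\theta ,
\]
which holds for \(w=v_m\) and, by density, for \(v\in W^{2,p}\) at almost every \(x\). Combining this with the occupation estimate at shift \(a=\theta\sigma z\), where uniformity in \(a\) is essential, gives
\[
 \mathbb E\int_0^1\int_{|z|\le1}|\Theta_w(X_{s-},z)|F(dz)\,ds
 \le C_g\|\sigma\|_{\mathrm{op}}^2\|D^2w\|_{L^p}\int_{|z|\le1}|z|^2F(dz).
\]
This is Fubini in \((s,z,\theta)\); measurability is unproblematic once we fix a Borel representative.

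With \(w=v\), this bound is \eqref{eq:levy_smoothing_remainder_integrability}. By the compensation formula it also gives \(\mathbb E\sum_{s\le1,|\Delta Y_s|\le1}|\Theta_v(X_{s-},\Delta Y_s)|<\infty\), which yields absolute convergence of the series. Because the pointwise identity for \(v\) holds only almost everywhere, I would not use it directly on the jump sum. Instead I would apply the estimate to \(w=v_m-v_{m'}\) to get a Cauchy property in \(L^1(\Omega)\) for the small-jump sums. Their limit must coincide with \(\sum\Theta_v\), because \(\Theta_{v_m}\to\Theta_v\) pointwise, which follows from the uniform convergence of \(v_m\) and \(Dv_m\).

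Finally, along a subsequence all terms converge uniformly on \([0,1]\), almost surely. Both sides of the limiting identity are c\`adl\`ag, so equality holds indistinguishably on \([0,1]\).
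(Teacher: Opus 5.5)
Your proposal is correct and follows essentially the same route as the paper. You mollify \(v\), apply the classical It\^o formula, and control the Hessian term and the small-jump Taylor remainders through the shifted occupation estimate and the compensation formula. The finitely many large jumps are handled pathwise, as in the paper.

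There are two minor differences. First, you get the remainder bound for \(v\) itself from an almost-everywhere Taylor identity. Applying that identity at \(X_{s-}\) relies on the occupation estimate to ensure \(X_{s-}\) avoids the exceptional Lebesgue-null set for a.e.\ \(s\). The paper avoids this point: it proves the bound for smooth \(w\), extends it to all \(w\in C_c^1\cap W^{2,p}\) by Fatou's lemma, and applies it directly to \(w=v_n-v\). Second, you identify the limit of the jump sums by a Cauchy argument, which is equivalent to the paper's direct estimate.
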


\begin{proof}
Put \(M_{2,\sigma}=\int_{|z|\le1}|\sigma z|^2F(dz)<\infty\).
We first establish a remainder estimate for every
\(w\in C_c^1\cap W^{2,p}\).
For smooth \(w=(w^1,\ldots,w^m)\), Taylor's formula gives, for \(1\le a\le m\),
\[
 \Theta_w^a(x,z)
 =
 \int_0^1(1-u)
 \sum_{j,k=1}^d
 \partial_{jk}w^a(x+u\sigma z)
 (\sigma z)^j(\sigma z)^k\,du.
\]
Set
\[
 H_w(x)
 \coloneqq
 \left(
 \sum_{a=1}^m\sum_{j,k=1}^d
 |\partial_{jk}w^a(x)|^2
 \right)^{1/2}.
\]
By the Cauchy--Schwarz inequality,
\[
 |\Theta_w(X_{s-},z)|
 \le |\sigma z|^2\int_0^1(1-u)H_w(X_{s-}+u\sigma z)\,du.
\]
For fixed \(u\) and \(z\), the shift \(u\sigma z\) is
deterministic, so the occupation estimate gives
\[
 \mathbb E\int_0^1 H_w(X_{s-}+u\sigma z)\,ds
              \le C_g\|D^2w\|_{L^p}.
\]
The factor \(\int_0^1(1-u)\,du=1/2\) is absorbed in the
constant below.
\begingroup\interdisplaylinepenalty=10000 % Keep (C.7) on one page.
Tonelli's theorem and the compensation formula now yield
\begin{align}
 &\mathbb E\int_0^1\int_{|z|\le1}
               |\Theta_w(X_{s-},z)|F(dz)\,ds
   =\mathbb E\sum_{\substack{0<s\le1\\0<|\Delta Y_s|\le1}}
               |\Theta_w(X_{s-},\Delta Y_s)|\notag\\
 &\hspace{20mm}\le C M_{2,\sigma}\|D^2w\|_{L^p}.
 \label{eq:levy_smoothing_remainder_bound}
\end{align}
\endgroup
Here compensation is valid for nonnegative predictable integrands,
initially with values in \([0,\infty]\).
For general \(w\), take \(w_n=w*\varrho_n\).
By the standard properties of mollification,
\(w_n,Dw_n\) converge uniformly to \(w,Dw\), and
\(w_n\to w\) in \(W^{2,p}\): the local convergence statements
become global because the functions and their mollifications are
all supported in a common compact set.
Fatou's lemma extends \eqref{eq:levy_smoothing_remainder_bound}
to \(w\), with equality again supplied by compensation.
In particular, it proves \eqref{eq:levy_smoothing_remainder_integrability}.

Now take \(v_n=v*\varrho_n\) and apply It\^o's formula
componentwise to \(v_n(X)\).
As above, \(v_n,Dv_n\) converge uniformly to \(v,Dv\), and
\(D^2v_n\to D^2v\) in \(L^p\).
The left-hand sides converge uniformly.  The integrands
\(Dv_n(X_-)\) are predictable and bounded by a common deterministic
constant, so their stochastic integrals converge componentwise in
ucp by the dominated convergence theorem for stochastic integrals.
For the Hessian term, \eqref{eq:levy_smoothing_occupation} gives
\[
 \mathbb E\int_0^1
       |Q^{jk}(\partial_{jk}v_n-\partial_{jk}v)(X_{s-})|ds
       \le C\|Q\|\|D^2v_n-D^2v\|_p\longrightarrow0.
\]
Applying the general estimate \eqref{eq:levy_smoothing_remainder_bound}
directly to \(w=v_n-v\) gives convergence of the small-jump
remainders in expected total variation:
\[
 \mathbb E\sum_{\substack{0<s\le1\\0<|\Delta Y_s|\le1}}
   |\Theta_{v_n}(X_{s-},\Delta Y_s)-\Theta_v(X_{s-},\Delta Y_s)|
 \le C M_{2,\sigma}\|D^2(v_n-v)\|_p\longrightarrow0.
\]
There are almost surely finitely many jumps with
\(|\Delta Y_s|>1\); their remainders converge pathwise by the
uniform convergence of \(v_n,Dv_n\).
The Hessian integral is absolutely integrable in time by \eqref{eq:levy_smoothing_occupation},
and the jump sum is absolutely convergent by
\eqref{eq:levy_smoothing_remainder_bound} and finiteness of the
large-jump sum.  Passing to the limit in ucp proves
\eqref{eq:levy_smoothing_sobolev_ito} indistinguishably on \([0,1]\).
Finally, applying the occupation estimate to the indicator of a
Lebesgue-null set proves independence of the Borel representative
of the weak Hessian.
\end{proof}

\subsection{Characteristics and localization}
\label{subsec:levy_characteristics_localization}

\begin{lemma}[Characteristics of \(f(X)\)]
\label{le:levy_smoothing_local_characteristics}
Assume the hypotheses of \cref{thm:sm_em_relaxed_regularity}, with condition (B). Then
\eqref{eq:app_augmented_fX_decomposition} and
\eqref{eq:sm_common_calculus_integrability} hold, with
\begin{equation}\label{eq:levy_smoothing_beta}
 \beta_s=Df(X_{s-})\{f(X_{s-})+\sigma b\}
          +\frac12Q^{jk}\partial_{jk}f(X_{s-})
          +\int_{\mathbb R^q}r_s(z)F(dz),
\end{equation}
where \(r\) is defined by \eqref{eq:sm_common_truncated_correction}.
The right-hand side defines \(\beta\) outside a predictable
\(ds\otimes d\mathbb P\)-null set; we set \(\beta=0\) on that set.
\end{lemma}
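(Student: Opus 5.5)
The plan is to localize the drift so that \cref{lem:levy_smoothing_occupation,lem:levy_smoothing_ito} apply, derive a generalized It\^o formula for \(f(X)\) on each localization interval, and then rewrite it relative to \(h_f\) exactly as in \eqref{eq:app_augmented_drift_process}.

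Fix \(R>|x_0|\) and a cutoff \(\chi_R\in C_c^\infty\) equal to one on \(\overline B(0,R+1)\). Set \(g_R=\chi_R f\), which is bounded and globally Lipschitz because \(f\in C^1\). Let \(X^R\) solve \(X^R_t=x_0+\int_0^t g_R(X^R_{s-})ds+L_t\). Pathwise uniqueness for the random ODE satisfied by \(R=X-\sigma Y\) gives \(X=X^R\) on \([0,\tau_R]\), where \(\tau_R\) is as in the proof of \cref{thm:sm_em_joint}.

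Next, take a second cutoff \(\psi_R\in C_c^\infty\) equal to one on \(\overline B(0,R+1)\) and put \(v=\psi_Rf\). This \(v\) lies in \(C_c^1\cap W^{2,p}\), by \(f\in C^1\cap W^{2,p}_{\mathrm{loc}}\). Then \cref{lem:levy_smoothing_ito}, applied to \(X^R\), gives the Sobolev It\^o formula \eqref{eq:levy_smoothing_sobolev_ito} for \(v(X^R)\). It also gives the bound \eqref{eq:levy_smoothing_remainder_integrability} and integrability of the Hessian term. Stopping at \(\tau_R\) yields the same identity with \(f(X)\) in place of \(v(X^R)\), up to two corrections. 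The first is the Hessian term, which requires only \(\partial_{jk}v=\partial_{jk}f\) a.e. on the ball. The occupation estimate guarantees that a.e. equality suffices, since \(X_{s-}\) spends zero time in null sets. The second is the jump remainders. For \(|X_{s-}|\le R\) and \(|z|\le\varepsilon_\sigma\), the segment stays in \(B(0,R+1)\), so \(\Theta_v=\Theta_f\) there. The larger jumps and the exit jump are finitely many and are handled pathwise.

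Then substitute \(dX=f(X_-)ds+\sigma dY\) and insert the canonical representation of \(Y\). Change truncation using \(r\), whose small-jump part is \(\Theta_f\) and whose large-jump part is bounded, as in condition (A). This produces \eqref{eq:app_augmented_fX_decomposition} on \([0,\tau_R]\) with \(\beta\) given by \eqref{eq:levy_smoothing_beta}. Predictability holds because \(X_-\) is predictable and the weak Hessian is taken as a Borel representative.

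For \eqref{eq:sm_common_calculus_integrability} on \([0,\tau_R]\), the Hessian term and the small-jump part of \(\int|r_s|F(dz)ds\) have finite expectation. This follows from \eqref{eq:levy_smoothing_occupation} applied to \(H_v\in L^p\), together with \eqref{eq:levy_smoothing_remainder_integrability}. The large-jump part of \(r\) is bounded against the finite measure \(F(\{|z|>\varepsilon\})\). The remainder sum \(\sum|\Theta_f|\) is handled by compensation for small jumps and by finiteness for large ones. Finally, nonexplosion gives \(\tau_R=1\) for all large \(R\) almost surely, which removes the localization.

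I expect the main obstacle to be the consistency of these localized objects. Specifically, the decomposition must not depend on \(R\), which comes from the uniqueness of the drift density \(\beta\). In addition, the a.e.-defined Hessian composed with \(X_{s-}\) must give a well-defined predictable process off a \(ds\otimes d\mathbb P\)-null set. The occupation estimate for \(X^R\), transferred to \(X\) before \(\tau_R\), is what controls both points.
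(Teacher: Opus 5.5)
Your proposal is correct and follows essentially the same route as the paper: cut off the drift to obtain a global solution $X^R$ that agrees with $X$ up to and including $\tau_R$, apply the occupation estimate and the Sobolev It\^o formula to $X^R$, transfer back before $\tau_R$, change truncation via $r$ (bounding its small-jump part by the remainder estimate and its large-jump part by $F(\{|z|>\delta_R\})<\infty$), and remove the localization by nonexplosion. The differences are cosmetic. You use two cutoffs equal to one on $\overline B(0,R+1)$, whereas the paper uses a single $f_R=\chi_R f$ equal to $f$ near $\overline B(0,R+2)$. Your phrase ``handled pathwise'' for the exit jump is exactly the paper's explicit cancellation: the correction $D_R=f(X_{\tau_R})-f_R(X_{\tau_R})$ appears both on the left-hand side and in the remainder sum.
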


\begin{proof}
For an integer \(R>|x_0|\), choose \(\chi_R\in C_c^\infty\) equal
to one on a neighborhood of \(\overline B(0,R+2)\), and put
\(f_R=\chi_Rf\).  The Leibniz rule gives
\(f_R\in C_c^1\cap W^{2,p}\); in particular, \(f_R\) is bounded
and globally Lipschitz.
Since \(f_R\) is bounded and globally Lipschitz, the cutoff
equation driven by the same \(Y\) has a unique global solution
\(X^R\). Indeed, subtracting \(\sigma Y\) reduces it to an ordinary
integral equation, to which Picard iteration applies pathwise.
Set
\[
 \tau_R=\inf\{t\in[0,1]:|X_t|>R\}\wedge1.
\]
Since the two equations are driven by the same L\'evy process
and \(f=f_R\) on \(B(0,R+2)\), their stopped difference satisfies
\[
 X_{t\wedge\tau_R}-X^R_{t\wedge\tau_R}
 =\int_0^{t\wedge\tau_R}
       \bigl[f_R(X_{s-})-f_R(X^R_{s-})\bigr]\,ds.
\]
The global Lipschitz property of \(f_R\) and Gronwall's lemma
imply \(X_{t\wedge\tau_R}=X^R_{t\wedge\tau_R}\) for all
\(t\in[0,1]\).  This includes the exit jump, since the additive
noise cancels in the difference.
Apply \cref{lem:levy_smoothing_occupation,lem:levy_smoothing_ito}
to the global solution \(X^R\), and stop the identity
\eqref{eq:levy_smoothing_sobolev_ito} for \(f_R(X^R)\) at \(\tau_R\).

Write \(D_R=f(X_{\tau_R})-f_R(X_{\tau_R})\).
The functions \(f_R\) and \(f\) agree before exit, so, for every \(t\in[0,1]\),
\begin{align}
 f(X_{t\wedge\tau_R})-f_R(X^R_{t\wedge\tau_R})
 &=\mathbf1_{\{\tau_R\le t\}}D_R,
 \label{eq:levy_localization_exit_left}\\*
 \sum_{0<s\le t\wedge\tau_R}
 \bigl[\Theta_f(X_{s-},\Delta Y_s)
       -\Theta_{f_R}(X^R_{s-},\Delta Y_s)\bigr]
 &=\mathbf1_{\{\tau_R\le t\}}D_R.
 \label{eq:levy_localization_exit_jumps}
\end{align}
Indeed, the remainders agree at every jump before \(\tau_R\), and
at an exit jump their difference is \(D_R\).
If \(\Delta X_{\tau_R}=0\), then
\(X_{\tau_R}=X_{\tau_R-}\in\overline B(0,R)\), so \(D_R=0\).
Thus a nonzero correction occurs only at an actual jump included in
the second identity; this also covers \(\tau_R=1\).
The first-order integrals agree because their stopped integrands
and integrators agree.  The weak Hessians agree almost everywhere
on \(B(0,R+2)\), and the occupation estimate for \(X^R\), restricted
to \([0,\tau_R]\), makes their Hessian integrals agree, independently
of the representatives.
The two corrections in
\eqref{eq:levy_localization_exit_left} and \eqref{eq:levy_localization_exit_jumps}
therefore cancel, giving the corresponding stopped It\^o formula for \(f(X)\).
Here \(D_R\) is only a finite pathwise quantity.

For \(s\le\tau_R\) and \(|z|\le\varepsilon_\sigma\), both
\(X_{s-}\) and \(X_{s-}+\sigma z\) lie in \(B(0,R+2)\), where \(f_R=f\).  Since both truncations equal the identity near zero and
\(Df\) is bounded on \(\overline B(0,R+2)\), choose a deterministic
\(0<\delta_R\le\varepsilon_\sigma\) such that
\[
 r_s(z)=\Theta_f(X_{s-},z)=\Theta_{f_R}(X^R_{s-},z),
 \qquad s\le\tau_R,\quad |z|\le\delta_R.
\]
For \(|z|\le\delta_R\), use \eqref{eq:levy_smoothing_remainder_bound}
for \(f_R\) and \(X^R\).  For \(|z|>\delta_R\),
\[
 |r_s(z)|\le\|h_f\|_\infty+
     \sup_{|x|\le R}\|Df(x)\|_{\mathrm{op}}
               \|\sigma\|_{\mathrm{op}}\|h\|_\infty,
 \qquad s\le\tau_R,
\]
and \(F(\{|z|>\delta_R\})<\infty\).  Consequently,
\[
 \mathbb E\int_0^{\tau_R}\int_{\mathbb R^q}|r_s(z)|F(dz)\,ds<\infty.
\]
The occupation estimate for \(X^R\) also gives
\[
 \mathbb E\int_0^{\tau_R}|Q^{jk}\partial_{jk}f(X_{s-})|\,ds
 \le C_{f_R}\|Q\|\|D^2f_R\|_p<\infty.
\]
The series \(\sum_{0<s\le\tau_R}\Theta_f(X_{s-},\Delta Y_s)\)
converges absolutely: the
terms with \(|\Delta Y_s|\le\delta_R\) agree with the corresponding terms
for \(f_R(X^R)\), and there are only finitely many jumps with
\(|\Delta Y_s|>\delta_R\).

For \(s\le\tau_R\), the local boundedness of \(Df\) and the
properties of \(h_f\) give, for a finite deterministic constant \(C_R\),
\[
 |h_f(\Delta_f(X_{s-},z))|^2
       \le C_R(1\wedge|z|^2),\qquad s\le\tau_R.
\]
Consequently,
\[
 \int_0^{\tau_R}\int_{\mathbb R^q}
       |h_f(\Delta_f(X_{s-},z))|^2F(dz)\,ds
       \le C_R\int_{\mathbb R^q}(1\wedge|z|^2)F(dz)<\infty,
\]
so the corresponding compensated jump integral is well defined.
Moreover, the choice of \(\delta_R\) ensures that
\[
 \Delta_f(X_{s-},z)-h_f(\Delta_f(X_{s-},z))=0,
       \qquad s\le\tau_R,\quad |z|\le\delta_R.
\]
Since \(F(\{|z|>\delta_R\})<\infty\), the uncompensated
integral is a finite sum of jumps on the stopped interval.
The continuous martingale integral is also well defined, since
\(Df(X_{s-})\sigma\) is bounded for \(s\le\tau_R\) and
\(\langle Y^c\rangle_t=tc\).

Substitute the canonical representation of \(Y\) relative to \(h\)
into this stopped It\^o formula.
The integrand \(\mathbf1_{\{s\le\tau_R\}}r_s(z)\) is predictable.
Its expected absolute compensator integral is finite by the bound
above, and the compensation formula gives the same for its
absolute jump integral. Consequently, componentwise, the compensated
integral equals the difference of the corresponding jump and
compensator integrals.
The algebra is
\[
 Df(X_{s-})\sigma\{z-h(z)\}+\Theta_f(X_{s-},z)
 =\Delta_f(X_{s-},z)-h_f(\Delta_f(X_{s-},z))+r_s(z).
\]
Together with \(Df(X_{s-})\sigma h(z)=h_f(\Delta_f(X_{s-},z))-r_s(z)\),
this yields \eqref{eq:app_augmented_fX_decomposition} and
\eqref{eq:levy_smoothing_beta}, stopped at \(\tau_R\), consistently
with \eqref{eq:app_augmented_drift_process}.
The first-order drift term is bounded before stopping, so the
preceding estimates also give \(\int_0^{\tau_R}|\beta_s|ds<\infty\)
almost surely.  Together with the absolute integrability of \(r\)
against the compensator and the absolute summability of the Taylor
remainders \(\Theta_f(X_{s-},\Delta Y_s)\) over \(0<s\le\tau_R\), this verifies
\eqref{eq:sm_common_calculus_integrability} after stopping.

Choose a Borel representative of the weak Hessian.  All terms in
\eqref{eq:levy_smoothing_beta} are predictable wherever the formula
is finite; assigning \(\beta=0\) on its predictable exceptional
set preserves predictability.  The stopped estimates and the
countable sequence of integers \(R>|x_0|\) show that this set is
\(ds\otimes d\mathbb P\)-null.
Finally, nonexplosion and the c\`adl\`ag paths of \(X\) imply that,
pathwise, \(\tau_R=1\) for all sufficiently large integers \(R\).
Removing the localization along this countable sequence proves
the assertion.
\end{proof}

\section*{Acknowledgments}
I would like to thank Masaaki Fukasawa and Constantinos Kardaras
for sharing their preliminary calculations on the \(n\)-scale
asymptotic error of the Euler scheme for a one-dimensional SDE
with additive Brownian noise. These calculations helped motivate
the present work.

\begingroup
% Keep the final entry with the bibliography; retain its font and line spacing.
\setlength{\bibitemsep}{3pt plus 1pt minus 0.5pt}
\setlength{\emergencystretch}{3em}
\printbibliography
\endgroup

\end{document}

%% file: settings_latex/set_text_en_default.tex
\usepackage{geometry} % document margins
\usepackage[T1]{fontenc} % font encoding
\usepackage{lmodern} % default english font % times

\usepackage{titlesec}
\titleformat*{\section}{\Large\bfseries} % section font (jlreq)
\titleformat*{\subsection}{\large\bfseries}
\titleformat*{\subsubsection}{\bfseries}

\usepackage{enumitem} % extending list environments % [label=\arabic*]
\setlist[itemize]{labelsep=1pt, partopsep=-3pt} % itemize

\setlist[enumerate]{labelsep=4pt, partopsep=-3pt} % enumerate

\setlist[description]{labelsep=6pt, partopsep=-6pt} % description

\usepackage{comment} % comment out multiple lines
\usepackage{array} % extending \array and \tabular
\usepackage{listings} % typeset source codes % \begin{lstlisting}
\usepackage[normalem]{ulem} % underlines % break lines only english
\usepackage{ascmac} % screen, itembox, shadebox
\usepackage{tcolorbox} % colored box % including graphicx, xcolor, tikz
\usepackage{subcaption}
\tcbuselibrary{breakable, skins, theorems} % tcolorbox library

\newcounter{rnum} % Don't use otf package if you use pdflatex

\newtcolorbox{simplebox}{
    colback=white, colframe=black, arc=2mm, boxrule=0.25mm,
    enlarge top by=2mm, enlarge bottom by=0mm, breakable=true
}

\newtcolorbox{titlebox}[1]{enhanced,
    colback=white, colframe=black, arc=2mm, boxrule=0.25mm,
    title={#1}, attach boxed title to top left={xshift=4mm,yshift=-2mm},
    colbacktitle=white, coltitle=black,boxed title style={boxrule=0.25mm},
    enlarge top by=2mm, enlarge bottom by=0mm, breakable=true, top=4mm
}

%% file: settings_latex/set_ref_clever_en_simple.tex
\usepackage{amsmath}
\usepackage{amssymb}
\usepackage{amsthm}
\usepackage{mathtools}
\usepackage{bm}

\numberwithin{equation}{section} % section number in equation number
\allowdisplaybreaks

\usepackage{hyperref}
\hypersetup{
    setpagesize=false, %bookmarksdepth=tocdepth, bookmarksnumbered=true,
    colorlinks=true, citecolor=blue, %green!75!black,
    linkcolor=blue, urlcolor=cyan!65!black,
}
\usepackage{cleveref}
\expandafter\def\csname ver@etex.sty\endcsname{3000/12/31} % get rid of the warning about etex

\usepackage{autonum}

\theoremstyle{plain} % modify \begin{theorem}[title]\label{}

\newtheorem{theorem}{Theorem}[section]
\newtheorem{proposition}[theorem]{Proposition}
\newtheorem{lemma}[theorem]{Lemma}
\newtheorem{corollary}[theorem]{Corollary}

\newtheorem{theorem*}{Theorem}[section]
\newtheorem{proposition*}[theorem]{Proposition}
\newtheorem{lemma*}[theorem]{Lemma}
\newtheorem{corollary*}[theorem]{Corollary}
\newtheorem{definition*}[theorem]{Definition}
\newtheorem{assumption*}[theorem]{Assumption}

\theoremstyle{remark}

\newtheorem{remark}[theorem]{Remark}

\newtheorem{remark*}[theorem]{Remark}
\newtheorem{example*}[theorem]{Example}

\crefname{theorem}{Theorem}{Theorems}
\crefname{proposition}{Proposition}{Propositions}
\crefname{lemma}{Lemma}{Lemmas}
\crefname{corollary}{Corollary}{Corollaries}
\crefname{definition}{Definition}{Definitions}
\crefname{assumption}{Assumption}{Assumptions}
\crefname{remark}{remark}{remarks}
\crefname{example}{example}{examples}

\crefname{equation}{Equation}{Equations}
\crefname{figure}{Figure}{Figures}
\crefname{table}{Table}{Tables}
\crefname{algorithm}{Algorithm}{Algorithm}

\crefname{section}{Section}{Sections}
\crefname{subsection}{Subsection}{Subsections}
\crefname{appendix}{Appendix}{Appendices}
\Crefname{appendix}{Appendix}{Appendices}

\ifdefined\AddToHook
  \AddToHook{env/theorem/begin}{\crefalias{theorem}{theorem}}
  \AddToHook{env/proposition/begin}{\crefalias{theorem}{proposition}}
  \AddToHook{env/lemma/begin}{\crefalias{theorem}{lemma}}
  \AddToHook{env/corollary/begin}{\crefalias{theorem}{corollary}}
  \AddToHook{env/definition/begin}{\crefalias{theorem}{definition}}
  \AddToHook{env/assumption/begin}{\crefalias{theorem}{assumption}}
  \AddToHook{env/remark/begin}{\crefalias{theorem}{remark}}
  \AddToHook{env/example/begin}{\crefalias{theorem}{example}}
  \AddToHook{cmd/appendix/after}{%
    \crefalias{section}{appendix}%
    \crefalias{subsection}{subappendix}%
    \crefalias{subsubsection}{subsubappendix}%
  }
\fi

%% file: settings_latex/set_math_symbol_simple.tex
\DeclarePairedDelimiterX{\inner}[2]{\langle}{\rangle}{#1, #2} 
\DeclarePairedDelimiterX{\Set}[2]{\lbrace}{\rbrace}{#1\,\delimsize\vert\,#2}

\DeclarePairedDelimiterXPP\borel[1]{\mathcal{B}}{\lparen}{\rparen}{}{#1}
\DeclarePairedDelimiterXPP\Prob[2]{#1}{\lparen}{\rparen}{}{#2}
\DeclarePairedDelimiterXPP\conProb[3]{#1}{\lparen}{\rparen}{}{#2\,\delimsize\vert\,#3}
\DeclarePairedDelimiterXPP\Exp[2]{\mathbb{E}_{#1}}{\lbrack}{\rbrack}{}{#2}
\DeclarePairedDelimiterXPP\Var[2]{\mathbb{V}_{#1}}{\lbrack}{\rbrack}{}{#2}
\DeclarePairedDelimiterXPP\Cov[3]{\mathbb{Cov}_{#1}}{\lbrack}{\rbrack}{}{#2, #3}
\DeclarePairedDelimiterXPP\conExp[3]{\mathbb{E}_{#1}}{\lbrack}{\rbrack}{}{#2\,\delimsize\vert\,#3}
\DeclarePairedDelimiterXPP\conVar[3]{\mathbb{V}_{#1}}{\lbrack}{\rbrack}{}{#2\,\delimsize\vert\,#3}

%% file: settings_latex/set_bib_default.tex
\cslet{blx@noerroretextools}\empty % to avoid incompatibility with autonum (etextool)
\usepackage[backend=biber,style=alphabetic]{biblatex}
\DeclareDelimFormat[bib]{nametitledelim}{\addcolon\space}

\renewbibmacro{in:}{}

\renewbibmacro*{volume+number+eid}{
    \printfield{volume}
    \setunit*{\addnbspace}
    \printfield{number}
    \setunit{\addcomma\space}
    \printfield{eid}
}
\DeclareFieldFormat[article]{volume}{\mkbibbold{#1}}
\DeclareFieldFormat[article]{number}{\mkbibparens{#1}}

\renewbibmacro*{issue+date}{}
\renewbibmacro*{note+pages}{
    \printfield{note}
    \setunit{\bibpagespunct}
    \printfield{pages}
    \setunit{\addcomma\addspace}
    \usebibmacro{date}
    \newunit
}
\DeclareFieldFormat{pages}{#1}
\DeclareFieldFormat{date}{\mkbibparens{#1}}

\DeclareFieldFormat{parenswithperiod}{\mkbibbrackets{#1\addperiod}}

\renewbibmacro*{pageref}{
    \iflistundef{pageref}{}{
        \printtext[parenswithperiod]{
            \ifnumgreater{\value{pageref}}{1}
                {\bibstring{backrefpages}\ppspace}
                {\bibstring{backrefpage}\ppspace}
            \printlist[pageref][-\value{listtotal}]{pageref}
        }
        
    }
}
\DefineBibliographyStrings{english}{
    backrefpage  = {\hspace{-0.8mm}cit.\,on page\hspace{-1.2mm}},
    backrefpages = {\hspace{-0.8mm}cit.\, on pages\hspace{-1.2mm}},
}
\DeclareCiteCommand{\citeauthor}{
    \boolfalse{citetracker}%
    \boolfalse{pagetracker}%
    \usebibmacro{prenote}
    }{\ifciteindex
        {\indexnames{labelname}}
        {}%
    \printtext[bibhyperref]{\printnames{labelname}\!}
    }{\multicitedelim}{\usebibmacro{postnote}
}
\DeclareCiteCommand{\citeauthoryear}{
    \boolfalse{citetracker}%
    \boolfalse{pagetracker}%
    \usebibmacro{prenote}
    }{\ifciteindex
        {\indexnames{labelname}}
        {}%
    \printtext[bibhyperref]{\printnames{labelname}}\printtext{\hspace{1mm}(\printfield{year})\hspace{-2mm}}
    }{\multicitedelim}{\usebibmacro{postnote}
}